\documentclass{amsart}
\usepackage{amsfonts,amsthm,amsmath}
\usepackage{amssymb}
\usepackage{amscd}
\usepackage[utf8]{inputenc}
\usepackage[a4paper]{geometry}
\usepackage{enumerate}
\usepackage[usenames,dvipsnames]{color}
\usepackage{array}
\usepackage{array,tabularx}

\oddsidemargin 0pt \evensidemargin 0pt \marginparsep 10pt
\topmargin 0pt \baselineskip 14pt \textwidth 6in \textheight 9in
\linespread{1.2}

\DeclareMathOperator{\Coeff}{Coeff}
\DeclareMathOperator*{\Res}{Res}
\newcommand\note[1]{\mbox{}\marginpar{ \scriptsize\raggedright
\hspace{1pt}\color{red} #1}}

\numberwithin{equation}{section}
\numberwithin{equation}{subsection}

\theoremstyle{plain}

\newtheorem{theorem}[equation]{Theorem}
\newtheorem{lemma}[equation]{Lemma}

\newtheorem{thm}[equation]{Theorem}
\newtheorem{cor}[equation]{Corollary}

\newtheorem*{theorem*}{Theorem}

\theoremstyle{definition}
\newtheorem{example}[equation]{Example}
\newtheorem{remark}[equation]{Remark}

\newtheorem{definition}[equation]{Definition}





\def\C{\mathbb C}
\def\Q{\mathbb Q}

\def\Z{\mathbb Z}

\def\im{{\rm Im}}

\newcommand{\caly}{{\mathcal Y}}
\newcommand{\calz}{{\mathcal Z}}
\newcommand{\calv}{{\mathcal V}}

\newcommand{\cali}{{\mathcal I}}

\newcommand{\calO}{{\mathcal O}}

\newcommand{\calS}{{\mathcal S}}

\newcommand{\calL}{\mathcal{L}}

\newcommand{\tX}{\widetilde{X}}

\newcommand{\cX}{{\mathcal X}}
\newcommand{\cO}{{\mathcal O}}
\newcommand{\bP}{{\mathbb P}}
\newcommand*{\linebundle}{\mathcal{L}}
\newcommand{\bC}{{\mathbb C}}
\newcommand{\cF}{{\mathcal F}}

\newcommand{\eca}{{\rm ECa}}
\newcommand{\pic}{{\rm Pic}}

\newcommand{\m}{\mathfrak{m}}\newcommand{\fr}{\mathfrak{r}}
\newcommand{\mfl}{\mathfrak{L}}

\newcommand{\bt}{{\mathbf t}}

\newcommand{\bZ}{{\mathbb{Z}}}
\newcommand{\bQ}{{\mathbb{Q}}}

\newcommand{\omegl}{\lambda}

\author{J\'anos Nagy}
\address{Rényi Alfréd Institute of Mathematics,  Budapest, Hungary}
\email{janomo4@gmail.com}

\title{The possible values of geometric genera of normal surface singularities}

\begin{document}

\keywords{normal surface singularities, links of singularities, relatively generic structures, rational homology spheres, geometric genus, natural line bundles, deformation}

\subjclass[2010]{Primary. 32S05, 32S25, 32S50, 57M27
Secondary. 14Bxx}

\begin{abstract}

In this article we prove that the possible geometric genuses $p_g(\tX)$ corresponding to normal surface singularities $\tX$ with fixed negative definite resolution graph $\mathcal{T}$ form an interval of integers.
Similarly let us have a resolution graph $\mathcal{T}$ and a fixed normal surface singularity $(X, 0)$ with resolution $\tX$ and resolution graph $\mathcal{T}$, furthermore consider a Chern class $l' \in L'$.
We prove that the possible values of $h^1(\tX, \calL)$, where $\calL \in \pic^{l'}(\tX)$ form an interval of integers.
\end{abstract}

\maketitle

\linespread{1.2}


\pagestyle{myheadings} \markboth{{\normalsize  J. Nagy}} {{\normalsize Line bundles}}


\section{Introduction}\label{s:intr}

In the theory of normal surface singularities in the last decade one of the major issues was to compare it's analytic invariants with it's topological invariants, which are computable just from
the resolution graph $\mathcal{T}$ of the given singularity, or equivalently from the link of the singularity, which is an oriented graph 3-manifold.

The main subject of research was to provide topological formulae for several discrete analytic invariants or at least topological candidates.
Of course, when we fix the topological type and vary the analytic structure most of these analytic invariants also can change, so we have a hope to find purely topological formulas just in the case of special analytical families.

One such large breakthrough was the previous work of Némethi \cite{NJEMS}, and Okuma \cite{Ok}, \cite{NOk}, that for splice quotient singularities, or in an another language for singularities satisfying the end curve condition, many analytic invariants like geometric genus or analytic Poincaré series coincide with their topological candidates like Seiberg-Witten invariants and topological multivariable Zeta function.

The coincidence of the Seiberg-Witten invariant and the geometric genus also holds in the case of Newton nondegenerate hypersurface singularities \cite{Bald1}.

There is an another topological candidate which is called path lattice cohomology, or in other articles $MIN_{\gamma}$, which is also an upper bound for the geometric genus for every
analytic structure, however for some special families of singularities there is equality, like in the weighted homogenous case, or Superisolated case \cite{Bald2} or also in the case of Newton nondegenerate hypersurface singularities \cite{Bald2}.

However there are resolution graphs, for which $MIN_{\gamma}$ is not the geometric genus of any singularity corresponding to it, so the upper bound is not sharp in general \cite{tcusp}.

For a cycle $l$ supported on the excptional divisor on a resolution of a normal surface singularity let us define the Riemann-Roch function $\chi(l) = h^0(\calO_l) - h^1(\calO_l)$.
This is topological, depending only on the coefficients of the cycle $l$ and on the resolution graph. Let $E$ denote the exceptional divisor of a resolution.

While the minimal possible geometric genus for a fixed topological type is already known, which is the geometric genus of a generic singularity and is given by the formula $1 - \min_{E \leq l \in L} \chi(l)$ in \cite{NNA2}, the determination of the maximal possible geometric genus is still an open problem.

The main massage of these results is that while the geometric genus can change when we vary the analytic structure, there are combinatorial candidates for this value, and equality happens
for special families of analytic types.

However we show in this article the following theorem:

\begin{theorem*}
Let $\mathcal{T}$ be a rational homology sphere resolution graph, then the possible geometric genuses of normal surface singularities with resolution graph $\mathcal{T}$ form an interval of integers $[1 - \min_{ E \leq l \in L} \chi(l), M]$.
\end{theorem*}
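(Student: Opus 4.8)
The plan is to establish the interval property by a deformation/connectedness argument: one shows that the set of achievable values of $p_g$ is (i) bounded below by the generic value $1-\min_{E\le l\in L}\chi(l)$, (ii) bounded above (giving $M$), and (iii) has no gaps. Parts (i) and the lower bound are already known from \cite{NNA2}; the content of the theorem is the no-gaps statement, so I would concentrate there. The strategy I would pursue is to pass to the setting of line bundles and use the second assertion of the abstract: fix a single analytic structure $\tX$ on $\calT$, and study $h^1(\tX,\calL)$ as $\calL$ ranges over $\pic^{l'}(\tX)$ for suitable Chern classes $l'$. If I can prove that $\{h^1(\tX,\calL):\calL\in\pic^{l'}(\tX)\}$ is an interval of integers (the second boxed claim, presumably proved earlier in the paper or in parallel), then I would engineer a correspondence — via cutting the singularity by generic hyperplane sections, or via the ``relatively generic'' analytic structures referenced in the keywords — between geometric genera of singularities with graph $\calT$ and values $h^1$ of natural line bundles on a fixed resolution, so that the interval property transfers.

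Concretely, the key steps in order would be: (1) recall $p_g(\tX)=h^1(\tX,\calO_{\tX})$ and the Riemann–Roch/cohomology exact sequences that express $h^1(\calO_{\tX})$ in terms of $h^1$ of line bundles $\calO_{\tX}(-l)$ for cycles $l$ supported on $E$, using that for $l\gg0$ one has $h^1(\tX,\calO_{\tX})=h^1(\tX,\calO_l)$ and Laufer-type duality. (2) For a fixed resolution $\tX$, analyze how $h^1(\calL)$ varies within a fixed Chern class: by semicontinuity it can only jump, and I would show each jump is by exactly $1$ by exhibiting, between any structure realizing value $a$ and one realizing value $a+2$, an intermediate line bundle — this is the heart, and I expect to use an explicit one-parameter family $\calL_t$ (e.g. obtained by moving a section / a reduced divisor in an ECa-type construction, cf. the \texttt{ECa} and \texttt{Pic} notation set up in the preamble) and the fact that $h^0$ changes by one across a point where a section degenerates, forcing $h^1$ to change by one via $\chi$ being constant on a Chern class. (3) Transfer from line bundles on a fixed $\tX$ to varying analytic structures: use that every intermediate value of $h^1(\calL)$ on a ``generic-enough'' $\tX$ is itself realized as $p_g$ of some (possibly different) singularity with the same graph, by a construction that builds a singularity whose $\calO_{\tX}$-cohomology equals the prescribed $h^1(\calL)$ — e.g. taking an appropriate cyclic cover or a hypersurface section adapted to $\calL$, so that $p_g$ of the new singularity equals $h^1(\tX,\calL)$. (4) Combine: the minimum is the generic value by \cite{NNA2}, the maximum $M$ exists by boundedness (every $h^1$ on a negative definite graph is bounded, e.g.\ by path lattice cohomology $MIN_\gamma$), and steps (2)–(3) give connectedness, hence the achievable set is exactly $[1-\min_{E\le l\in L}\chi(l),\,M]$.

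The main obstacle I anticipate is step (2)/(3): controlling the \emph{size} of the jumps. Semicontinuity alone gives that the value set is ``closed under going down'' within a family, but not that it is an interval — one must rule out jumps of size $\ge 2$. The natural tool is to interpose, between two structures, a sufficiently rich family so that some intermediate member has the in-between cohomology; doing this requires a good moduli-type description of the space of analytic structures on $\calT$ (or of $\pic^{l'}(\tX)$ together with its $h^1$-stratification) and a transversality statement that the stratification by $h^1$ has the expected codimensions, so that a generic arc through the parameter space meets every codimension-one stratum. Making ``relatively generic structure'' precise and proving this codimension-one behaviour — essentially that the locus where $h^1$ is at least $k+1$ sits in codimension one inside the locus where $h^1$ is at least $k$ — is where the real work lies; everything else is bookkeeping with Riemann–Roch, semicontinuity, and the known formula for the generic $p_g$.
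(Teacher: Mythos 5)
Your overall architecture is reasonable at the level of headlines (lower bound from the generic value, boundedness, and a no-gaps argument), and you correctly isolate the real difficulty: semicontinuity only gives that values can drop, not that they drop by one. But at the two places where the actual content must appear, your proposal either leaves a placeholder or proposes a construction that would fail. In step (2) you say the heart is to interpose an intermediate line bundle via a one-parameter family $\calL_t$ in an $\eca$-type construction, with "$h^0$ changing by one across a point where a section degenerates." This is not a mechanism: moving a divisor inside $\eca^{l'}(Z)$ keeps you inside the image of the Abel map and gives no control on the size of the $h^1$ jump. The paper's device is quite different and is the key lemma of the whole article: given $\calL_1$ with $h^1=k$, pass to its cohomological cycle $Z'$, blow up a vertex $u$ with $Z'_u\geq 2$ sequentially at generic points, and restrict away the final $-1$-curve $E_{u'}$; because every differential form in the relevant quotient $H^0(\calO_{\tX}(K+Z'))/H^0(\calO_{\tX}(K))$ has pole order exactly $1$ along $E_{u'}$, and the forms vanishing at the first blown-up point form a codimension-one subspace, the restriction drops $h^1$ by \emph{exactly} one. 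The drop is then pushed back to the original cycle by perturbing inside the fiber of the restriction map $\pic^{\pi^*(l')}(Z'_{new})\to\pic(Z'_r)$ using the relatively generic line bundle formulas of Theorem \ref{th:hegy2rel}, an induction on $h^1(\calO_Z)$ (which strictly decreases under the restriction), and a termination argument for the resulting degeneration process. None of this is recoverable from "a generic arc meets every codimension-one stratum"; no transversality or expected-codimension statement for the $h^1$-stratification is available or used.

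The second, more decisive gap is your step (3): you propose to transfer the fixed-$\tX$ interval statement to varying analytic structures by building, from a line bundle $\calL$ on $\tX$, a new singularity with the \emph{same} graph $\mathcal{T}$ whose $p_g$ equals $h^1(\tX,\calL)$, e.g.\ via cyclic covers or hypersurface sections. No such construction exists: cyclic covers associated with $\calL$ change the resolution graph, and their $p_g$ is a sum of $h^1$ of several natural line bundles, not the single value you need; hyperplane sections produce curves, not surface germs on $\mathcal{T}$. The paper does not transfer Theorem A to the $p_g$ statement at all. Instead it proves a second, independent theorem (Theorem B) in which the \emph{analytic structure} of the non-reduced space $Z$ is deformed while its restriction $Z_r$ is held fixed; the substitute for $\pic$ is Laufer's complete deformation space of the pair $(Z'_{new,M},Z'_{r,M})$, and the substitute for "relatively generic line bundle" is "relatively generic analytic structure" in the sense of Theorem \ref{relgen2}. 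The geometric genus statement is then the special case $h^1(\calO_Z)=h^1(\calO_{Z-E_{|Z|}}(-E_{|Z|}))$ of Theorem B applied to the natural line bundle of Chern class $-E_{|Z|}$ for $Z\gg 0$. So while your plan correctly locates the difficulty, it supplies neither the blow-up/restriction lemma that produces the unit drop nor the Laufer-deformation framework that replaces the Picard group when the analytic type varies, and the bridge you propose between the two settings does not exist.
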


This result can be expected to be true in spite of the philosophy that for certain families of analytic structures the geometric genus chooses values among different topological candidates like Seiberg-Witten invariants or path lattice cohomology.

On the other hand these kinds of results were not the easiest to prove in the past without having a described moduli space of possible analytic structures and having just a few special analytic families with computable geometric genus, and without having more general families of singularities like generic singularities or relatively generic ones. 

\bigskip

We will heavily use the Abel map theory developed in \cite{NNA1} and \cite{NNA2}, which has got many similarities with the classical Abel map theory on smooth curves and Brill-Noether
theory, but many times the answers and even the questions are harder in our case.

\noindent

Notice that in the classical case, if one has a genus $g$ smooth curve $C$, then $h^1(\calO_C) = g$ independently of the analytic structure of $C$, so our question of 
the possible geometric genuses of normal surface singularities has got no counterpart in the classical case.

\noindent

On the other hand if one fixes a genus $g$ smooth curve $C$, and a degree $d$, then one has a stratificiation of the Picard group $\pic^{d}(C)$ by the Brill-Noether stratas
$W^r_d(C) = (\calL \in \pic^d(C) | h^0(C, \calL) \geq r+1)$.

\noindent

Several properties of this stratification, for example the dimension of the stratas, or their nonemptyness depends heavily on the analytic structure of $C$ and it is exactly the
task of the classical Brill-Noether theory to ivestigate its properties among the generic curves or among other special families of algebraic curves.

If $C$ is a smooth curve and $r > 0$ is a natural number such that $W^r_d(C)$ is nonempty then $W^{r-1}_d(C)$ is also nonempty, see \cite{ACGH}.

\noindent

This result in the classical Brill-Noether theory says basically that the possible values of $h^1$ of line bundles in $\pic^{d}(C)$ form an interval, and so it is very similar to our main theorem, in fact we will prove first the analouge of this statement in our case of normal surface singularities.

\noindent

Although it is a simple lemma in the classical case the proof is very far from working in our case, and we will need to use the methods and theorems from \cite{R} about relatively generic line bundles.

\noindent

Next we will prove our main theorem about the possible values of geometric genuses by following the usual philosophy developed in \cite{NNA1} and \cite{NNA2} that the cohomological behaviour of the line bundles on a fixed singularity $\tX$ is similar to the cohomological behaivour of the natural line bundles on analyitically varying but topologically fixed singularities.

\bigskip

\emph{More specially we will prove two main theorems:}

\bigskip

Let's have a resolution graph $\mathcal{T}$ and a corresponding normal surface singularity with resolution space $\tX$, furthermore let's fix a Chern class $l'$ and an effective
cycle $Z$. 

The first main theorem states that the possible values of $h^1(Z, \calL)$, where $\calL \in \pic^{l'}(Z)$ form an interval of integers:

\begin{theorem*}\textbf{A}
Let us have an arbitrary rational homology sphere resolution graph $\mathcal{T}$ and a corresponding singularity $(X, 0)$ with resolution $\tX$ and resolution graph $\mathcal{T}$, an effective cycle $Z$ and an abitrary Chern class $l'$. 

Let us denote $k = \max_{\calL \in \pic^{l'}(Z)} h^1(Z, \calL)$ and an arbitrary integer $r$ such that $ \chi(-l') - \min_{0 \leq l \leq Z} \chi(-l'+ l) \leq r \leq k$, then there is a line bundle $\calL \in \pic^{l'}(Z)$, such that $h^1(Z, \calL) = r$.
\end{theorem*}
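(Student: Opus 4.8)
The plan is to prove the statement by a descending induction on $r$, starting from the maximal value $k$ and showing that whenever some $\calL\in\pic^{l'}(Z)$ has $h^1(Z,\calL)=r>\chi(-l')-\min_{0\le l\le Z}\chi(-l'+l)$, there is another line bundle in $\pic^{l'}(Z)$ with $h^1$ exactly $r-1$. The key point is that the ``lower bound'' $\chi(-l')-\min_{0\le l\le Z}\chi(-l'+l)$ is precisely the $h^1$ value of a \emph{relatively generic} line bundle in $\pic^{l'}(Z)$ (in the sense of \cite{R}): relative genericity computes $h^1$ by the expected Riemann--Roch-type formula, so as long as we stay strictly above this value we know we are on a nontrivial Brill--Noether-type stratum and there is room to move. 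I would first recall from \cite{NNA1,NNA2,R} the structure of the stratification of $\pic^{l'}(Z)$ by the sets $\{\calL : h^1(Z,\calL)\ge j\}$, and the semicontinuity of $h^1$ along the (affine-space) fibers, so that the set of attained values is at least known to have $k$ as a maximum and to contain the relatively generic value as its minimum candidate.

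The heart of the argument is a ``one-step decrease'' lemma. Given $\calL_0$ with $h^1(Z,\calL_0)=r$ strictly above the relatively generic value, I would produce a suitable effective sub-cycle $0\le Z_1\le Z$ and use the cohomological exact sequence
\begin{equation}
0\to H^0(Z-Z_1,\calL_0(-Z_1))\to H^0(Z,\calL_0)\to H^0(Z_1,\calL_0|_{Z_1})\to H^1(Z-Z_1,\calL_0(-Z_1))\to H^1(Z,\calL_0)\to H^1(Z_1,\calL_0|_{Z_1})\to 0
\end{equation}
together with the Abel map description of how $h^1$ varies. The idea is to choose the sub-cycle and deform $\calL_0$ within $\pic^{l'}(Z)$ so that the restriction to the relevant sub-cycle (or to the complementary cycle) becomes ``more generic'' by exactly one unit of $h^1$, while controlling the other term in the sequence so that the total $h^1$ drops by exactly one and not more. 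This is where the machinery of relatively generic line bundles from \cite{R} is essential: it gives both the target value on the sub-cycle and the additivity needed to keep the count exact. One should iterate this, peeling off the excess $h^1$ one unit at a time, until the relatively generic value is reached; combined with semicontinuity this fills in all integers in $[\,\chi(-l')-\min_{0\le l\le Z}\chi(-l'+l),\,k\,]$.

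I expect the main obstacle to be the \emph{exactness} of the one-step decrease, i.e. showing that the drop in $h^1$ under the chosen deformation is exactly $1$ rather than $\ge 1$. In the classical Brill--Noether setting this is the easy observation that $h^0$ changes by at most one when one imposes a single point condition; here the analogue requires understanding the local structure of the Abel map and its tangent/differential (the ``$h^1$ of the image line bundle'' computations of \cite{NNA2}) well enough to guarantee that a generic small deformation along a chosen direction decreases $h^1$ by precisely one. A second, more technical difficulty is the bookkeeping when $Z$ is not reduced and the sub-cycle $Z_1$ must be chosen carefully (e.g. supported on a single vertex with an appropriate multiplicity) so that $\chi(-l'+l)$ for the relevant $l$ between $0$ and $Z$ matches the value produced by the exact sequence; here I would argue by choosing $Z_1$ to realize, or approach, the minimum in $\min_{0\le l\le Z}\chi(-l'+l)$ and then induct on $Z$ as well as on $r$, reducing to smaller cycles where the relatively generic value and the attained values are already controlled.
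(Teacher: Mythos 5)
Your high-level shape --- a one-step decrease argument, with the relatively generic value of \cite{R} and \cite{NNA2} as the floor and semicontinuity of $h^1$ controlling the ceiling --- agrees with the paper. But the step you yourself flag as ``the main obstacle,'' namely that the drop in $h^1$ under the chosen deformation is exactly $1$ and not merely $\geq 1$, is precisely where all the real work lies, and the mechanism you propose for it does not work. You suggest passing to a sub-cycle $0 \leq Z_1 \leq Z$ of the \emph{original} cycle and reading the drop off the restriction exact sequence; however, removing a component (or a multiple of one) from $Z$ can decrease $h^1$ by an arbitrary amount, and in general no sub-cycle of $Z$ itself produces a decrease of exactly one. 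The paper's solution is to change the resolution first: it blows up a vertex $u$ with $Z_u \geq 2$ at generic points $Z_u - 1$ times, so that every class in $H^0(\calO_{\tX}(K+Z))/H^0(\calO_{\tX}(K))$ has a pole of order at most $1$ along the final $(-1)$-curve $E_{u'}$. Removing $E_{u'}$ from the pulled-back cycle then drops $h^1$ by at most one (a codimension-one subspace of the relevant forms $\Omega_D$ vanishes at the first blow-up center, hence has no pole on $E_{u'}$), and the genericity of the centers guarantees the drop is at least one. This is Lemma \ref{minus}, the engine of the whole proof; without it or an equivalent device your plan does not produce the exact one-step decrease.

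Even granting such a lemma, the argument does not close the way you sketch. After cutting $E_{u'}$ out of the cycle one perturbs $\calL$ inside the fiber $r^{-1}(\calL_r)$ of the restriction to the reduced cycle; a relatively generic perturbation may \emph{fail} to decrease $h^1$, and the paper then invokes the explicit formula of Theorem \ref{th:hegy2rel} to locate a cycle $B$ realizing the minimum, applies an induction hypothesis --- on $h^1(\calO_Z)$, not on $r$ or on $Z$ --- to the strictly smaller cycle $(Z'_{new}-B)_r$, and proves the resulting degeneration loop terminates via a decreasing integer invariant. It must also first reduce to the cohomological cycle of $\calL$ and separately handle line bundles with nonempty fixed component cycle $A$. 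Your proposal names the correct target and the correct toolbox, but the decisive ideas (the blow-up reduction lemma, the induction on $h^1(\calO_Z)$, and the termination arguments) are absent, so as it stands there is a genuine gap.
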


\begin{remark}
Notice that by the formal neighborhood theorem,  if $Z$ is a large effective cycle supported on the exceptional divisor $E$, then $h^1(Z, \calL) = h^1(\tX, \calL)$, which means that the theorem 
above gives a similar theorem about the cohomology numbers $h^1(\tX, \calL)$.
\end{remark}

Similarly let us have a resolution graph $\mathcal{T}$, a Chern class $l'$ and an effective cycle $Z$, such that if $l' = \sum_{v \in \calv} b_v E_v$, then $b_v <0$ for
every vertex $v \in |Z|$ (Notice that the support $|Z|$ is not nessecarily the whole exceptional divisor $E$ which means that the condition is not equivalent to $l'$ being in the negative quadrant).

The second main theorem we prove states that the possible values the $h^1$ of natural line bundles $h^1(\calO_Z(l'))$ form an interval of integers if we consider any possible surface singularities with resolution graph $\mathcal{T}$:

\begin{theorem*}\textbf{B}
Let $\mathcal{T}$ be an arbitrary rational homology sphere resolution graph with vertex set $\calv$ and let $Z$ be an effective cycle on it, and suppose furthermore that $l' \in L'$ is a Chern class, such that $l' = \sum_{v \in \calv} a_v E_v^* \in L'$. Let us write $l' =  \sum_{v \in \calv} b_v E_v$ where $b_v$ are rational numbers, and assume that $b_v <0$ if $v \in |Z|$.

Suppose that we have a singularity $(X, 0)$ with resolution $\tX$ and resolution graph $\mathcal{T}$, and let us have the restricted natural line bundle $\calO_Z(l')$.

Suppose that $k = h^1(\calO_Z(l')) > \chi(-l') - \min_{0 \leq l \leq Z}  \chi(-l' + l)$, and let us have an arbitrary number $k > r \geq \chi(-l') - \min_{0 \leq l \leq Z}  \chi(-l' + l)$, then there is another singularity with resolution $\tX'$ and with resolution graph $\mathcal{T}$, for which one has $r = h^1(\calO_Z(l'))$.
\end{theorem*}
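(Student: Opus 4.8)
Set $m_0 := \chi(-l') - \min_{0 \le l \le Z}\chi(-l'+l)$; recall this is the universal (topological) lower bound for $h^1(\calO_Z(l'))$ that is attained by relatively generic structures, which is why the interval claimed is sharp on the left. Since $\calO_Z(l') \in \pic^{l'}(Z)$ for the given $\tX$, we have $k = h^1(\calO_Z(l')) \le \max_{\calL \in \pic^{l'}(Z)} h^1(Z,\calL)$, so the chosen integer $r$ lies in $[m_0,\, \max_{\calL} h^1(Z,\calL)]$. Hence Theorem~A applies on the fixed $\tX$ and produces a line bundle $\calL_0 \in \pic^{l'}(Z)$ with $h^1(Z,\calL_0) = r$. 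The statement is thereby reduced to the following transfer problem: given a line bundle $\calL_0$ of Chern class $l'$ on the fixed resolution $\tX$, find an analytic structure $\tX'$ with the same resolution graph $\mathcal{T}$ whose natural line bundle satisfies $h^1(\calO_Z(l')) = h^1(Z,\calL_0)$. This is exactly the ``fixed singularity versus natural line bundles on varying singularities'' dictionary from the introduction.

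To carry out the transfer I would not deform $\calL_0$ but deform the analytic structure, following the relatively generic machinery of \cite{R}. Embed $\tX$ into a family $\{\tX_t\}_t$ of resolutions, all with resolution graph $\mathcal{T}$, in which the analytic structure is held fixed ``away from $|Z|$'' and varied ``along $|Z|$'' --- concretely, take the relatively generic deformations of \cite{R} relative to the vertex subset $\calv \setminus |Z|$. This is precisely the freedom that moves the class $\calO_Z(l')$ inside $\pic^{l'}(Z)$. Here the hypothesis $b_v < 0$ for $v \in |Z|$ enters in an essential way: it guarantees that the relevant Abel maps $\eca^{l'}(Z) \to \pic^{l'}(Z)$, and their relatively generic versions, are dominant onto affine subspaces of the expected dimension, so that as $t$ varies the classes $\calO_{Z_t}(l')$ sweep out a subvariety of $\pic^{l'}(Z)$ large enough to meet every Brill--Noether-type stratum $\{\calL : h^1(Z,\calL) = j\}$ for $m_0 \le j \le k$, in particular one on which $h^1 = r$. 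At the corresponding parameter $t$ the structure $\tX_t$ has $h^1(\calO_Z(l')) = r$ and is the desired $\tX'$.

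Equivalently, and mirroring the proof of Theorem~A step by step, one may argue inductively: starting from $\tX$ with $h^1(\calO_Z(l')) = k > m_0$, the class $\calO_Z(l')$ then lies on a proper Brill--Noether stratum, so by \cite{R} there is a one-parameter relatively generic deformation of the analytic structure --- staying within structures realizing $\mathcal{T}$ --- along which the cohomology of the natural line bundle drops by exactly one, yielding a structure with $h^1 = k-1 \ge m_0$; iterating $k - r$ times reaches $h^1 = r$. Upper semicontinuity of $h^1(\calO_{Z_t}(l'))$ in such families shows the value cannot increase along the deformation, and the negativity of the $b_v$ on $|Z|$ is what forbids undershooting $m_0$ and ensures the strata are genuinely nested without gaps.

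The main obstacle is the transfer itself: producing, from a deformation direction of line bundles on the fixed $\tX$, an honest deformation of the analytic structure that keeps $Z$, $l'$ and the graph $\mathcal{T}$ fixed and changes $h^1$ of the \emph{natural} line bundle by exactly the prescribed amount. This is where one must lean on the full strength of the relatively generic theory of \cite{R}, together with the Abel-map computations of \cite{NNA1} and \cite{NNA2}; once that dictionary is set up in the present relative situation, the interval property of $h^1(\calO_Z(l'))$ follows as above, and, via the formal neighborhood theorem applied to large $Z$, so does the corresponding statement for $h^1(\tX',\calL)$ and ultimately for the geometric genus.
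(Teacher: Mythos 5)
Your proposal has a genuine gap, and it sits exactly where the real work of the theorem lies. The opening reduction --- invoke Theorem A to produce $\calL_0\in\pic^{l'}(Z)$ with $h^1(Z,\calL_0)=r$ on the fixed $\tX$, then ``transfer'' this to a new analytic structure --- does not work as stated. When the analytic structure of the singularity varies, the non-reduced space $Z$ itself varies as an analytic space, so the natural line bundles $\calO_{Z_t}(l')$ of the different structures do not live in one common $\pic^{l'}(Z)$; there is no ambient Picard group in which they could ``sweep out a subvariety large enough to meet every stratum,'' and no dominance statement in \cite{R} asserts anything of the sort. The hypothesis $b_v<0$ on $|Z|$ is not what makes such a sweep possible; it is only what guarantees that the \emph{minimal} value $\chi(-l')-\min_{0\le l\le Z}\chi(-l'+l)$ is the correct left endpoint (via \cite{NNA2}). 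Likewise, your fallback --- ``by \cite{R} there is a one-parameter relatively generic deformation along which $h^1$ of the natural line bundle drops by exactly one'' --- is precisely the statement to be proved, not a citation; the results of \cite{R} compute $h^1$ for a relatively generic structure in terms of data on a fixed substructure, but by themselves they give no control ensuring the drop is exactly one rather than zero or more than one.

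What the paper actually does, and what is missing from your sketch, is a separate induction on $h^1(\calO_Z)$ run directly on analytic structures, with the following ingredients: reduction to the fixed-component-free case and then to the cohomological cycle $Z''$; the blow-up construction at a vertex $u$ with $Z''_u\ge 2$ producing a $-1$-curve $E_{u'}$, together with the key Lemma~\ref{minus} showing that restricting away from $E_{u'}$ drops $h^1$ by \emph{exactly} one (an upper bound from a codimension-one subspace of forms vanishing at the first blown-up point, and a lower bound from genericity of the centers); further blow-ups to kill $e(u_M)$ so that the restricted natural line bundle is insensitive to the deformation; Laufer's completeness theorem (Theorem~\ref{th:La2}) to realize a deformation of the pair fixing the substructure inside a complete deformation of $Z'$; Theorem~\ref{relgen2} to express $h^1$ of the relatively generic structure through natural line bundles on the smaller cycle, where the induction hypothesis applies; and finally a termination argument (a strictly decreasing sum of cohomology numbers) to show the iteration stops. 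Without these steps the interval property is asserted rather than derived, so the proposal as written is not a proof.
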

\begin{remark}
We need the condition that $b_v <0$ if $v \in |Z|$, because the cohomology numbers of the natural line bundle has the exact value $\chi(-l') - \min_{0 \leq l \leq Z}  \chi(-l' + l)$ only
in these cases form \cite{NNA2}, otherwise there are only much more complicated formulae for them from \cite{CNG}.
\end{remark}

Corollary\ref{geomgen} of Theorem\textbf{B} will show that the possible values of the geometric genuses $p_g(\tX)$ form an interval of integers, so Theorem \textbf{A}.

The structure of the article is the following:

In section 2) we summarise the necessary topological and analytic invariants of normal surface singularities and their main properties. In section 3) we recall the results about effective Cartier divisors and Abel maps on surface singularities from \cite{NNA1}.  In section 4) we recall Laufer's results about deformations of normal surface singularities. In section 5) we recall the results about cohomology of relatively generic line bundles and cohomology of natural line bundles on relatively generic surface singularities from \cite{R}.
 In section 6) we prove a key lemma about the behavior of $h^1$ of line bundles under certain blow ups and restrictions, which will be in the heart of every proof.
In section 7) we outline the proofs of Theorem\textbf{A} and Theorem\textbf{B} without going deeply into various technical details. Finally in section 8) we prove Theorem A and in section 9) we prove Theorem B and its corrolary about the possible geometric genuses of normal surface singularities.

\section{Preliminaries}\label{s:prel}

\subsection{The resolution}\label{ss:notation}
Let $(X,o)$ be the germ of a complex analytic normal surface singularity,
 and let us fix  a good resolution  $\phi:\widetilde{X}\to X$ of $(X,o)$.
We denote the exceptional curve $\phi^{-1}(0)$ by $E$, and let $\{E_v\}_{v\in\calv}$ be
its irreducible components. Set also $E_I:=\sum_{v\in I}E_v$ for any subset $I\subset \calv$.
For the cycle $l=\sum n_vE_v$ let its support be $|l|=\cup_{n_v\not=0}E_v$.
Mixing the two notations we will use $E_{|l|} = \sum_{v\in |l|}E_v$ for an arbitrary cycle $l$.
For more details see \cite{NCL,Nfive}.
\subsection{Topological invariants}\label{ss:topol}
Let $\Gamma$ be the dual resolution graph
associated with $\phi$;  it  is a connected graph.
Then $M:=\partial \widetilde{X}$, as a smooth oriented 3--manifold, 
 can be identified with the link of $(X,o)$, it is also
an oriented  plumbed 3--manifold associated with $\Gamma$.
{\it We will assume  (for any singularity we will deal with) that the link
 $M$ is a rational homology sphere,}
or, equivalently,  $\Gamma$ is a tree with all genus
decorations  zero. We use the same
notation $\mathcal{V}$ for the set of vertices.

The lattice $L:=H_2(\widetilde{X},\mathbb{Z})$ is  endowed
with a negative definite intersection form  $I=(\,,\,)$. It is
freely generated by the classes of 2--spheres $\{E_v\}_{v\in\mathcal{V}}$.
 The dual lattice $L':=H^2(\widetilde{X},\mathbb{Z})$ is generated
by the (anti)dual classes $\{E^*_v\}_{v\in\mathcal{V}}$ defined
by $(E^{*}_{v},E_{w})=-\delta_{vw}$, the opposite of the Kronecker symbol.
The intersection form embeds $L$ into $L'$. Then $H_1(M,\mathbb{Z})\simeq L'/L$, abridged by $H$.
Usually one also identifies $L'$ with those rational cycles $l'\in L\otimes \Q$ for which
$(l',L)\in\Z$ (or, $L'={\rm Hom}_\Z(L,\Z)\simeq H^2(\tX,\mathbb{Z})$), where the intersection form extends naturally.

All the $E_v$--coordinates of any $E^*_u$ are strict positive.
We define the Lipman cone as $\calS':=\{l'\in L'\,:\, (l', E_v)\leq 0 \ \mbox{for all $v$}\}$.
It is generated over $\bZ_{\geq 0}$ by $\{E^*_v\}_v$.
We also write $\calS:=\calS'\cap L$.

There is a natural partial ordering of $L'$ and $L$: we write $l_1'\geq l_2'$ if
$l_1'-l_2'=\sum _v r_vE_v$ with all $r_v\geq 0$. We set $L_{\geq 0}=\{l\in L\,:\, l\geq 0\}$ and
$L_{>0}=L_{\geq 0}\setminus \{0\}$.

We define the
  (anti)canonical cycle $Z_K\in L'$ via the {\it adjunction formulae}
$(-Z_K+E_v,E_v)+2=0$ for all $v\in \mathcal{V}$.
(In fact,  $Z_K=-c_1(\Omega^2_{\widetilde{X}})$, cf. (\ref{eq:PIC})).
In a minimal resolution $Z_K\in \calS'$.

Finally we consider the Riemann--Roch expression
 $\chi(l')=-(l',l'-Z_K)/2$ defined for any $l'\in L'$.

\subsection{Some analytic invariants}\label{ss:analinv}
{\bf The group ${\rm Pic}(\widetilde{X})$}
of  isomorphism classes of analytic line bundles on $\widetilde{X}$ appears in the (exponential) exact sequence
\begin{equation}\label{eq:PIC}
0\to {\rm Pic}^0(\widetilde{X})\to {\rm Pic}(\widetilde{X})\stackrel{c_1}
{\longrightarrow} L'\to 0, \end{equation}
where  $c_1$ denotes the first Chern class. Here
$ {\rm Pic}^0(\widetilde{X})=H^1(\widetilde{X},\calO_{\widetilde{X}})\simeq
\C^{p_g}$, where $p_g$ is the {\it geometric genus} of
$(X,o)$. $(X,o)$ is called {\it rational} if $p_g(X,o)=0$.
 Artin in \cite{Artin62,Artin66} characterized rationality topologically
via the graphs; such graphs are called `rational'. By this criterion, $\Gamma$
is rational if and only if $\chi(l)\geq 1$ for any effective non--zero cycle $l\in L_{>0}$.

The epimorphism
$c_1$ admits a unique group homomorphism section $l'\mapsto s(l')\in {\rm Pic}(\widetilde{X})$,
 which extends the natural
section $l\mapsto \calO_{\widetilde{X}}(l)$ valid for integral cycles $l\in L$, and
such that $c_1(s(l'))=l'$  \cite{OkumaRat}.
We call $s(l')$ the  {\it natural line bundles} on $\widetilde{X}$ and we denote $s(l') = \calO_{\tX}(l')$.
By  the very  definition, $\calL$ is natural if and only if some power $\calL^{\otimes n}$
of it has the form $\calO_{\tX}(l)$ for some $l\in L$.

\bekezdes $\mathbf{{Pic}(Z)}.$ \
Similarly, if $Z\in L_{>0}$ is a non--zero effective integral cycle such that its support is $|Z| =E$,
and $\calO_Z^*$ denotes
the sheaf of units of $\calO_Z$, then ${\rm Pic}(Z)=H^1(Z,\calO_Z^*)$ is  the group of isomorphism classes
of invertible sheaves on $Z$. It appears in the exact sequence
  \begin{equation}\label{eq:PICZ}
0\to {\rm Pic}^0(Z)\to {\rm Pic}(Z)\stackrel{c_1}
{\longrightarrow} L'\to 0, \end{equation}
where ${\rm Pic}^0(Z)=H^1(Z,\calO_Z)$.
If $Z_2\geq Z_1$ then there are natural restriction maps,
${\rm Pic}(\widetilde{X})\to {\rm Pic}(Z_2)\to {\rm Pic}(Z_1)$.
Similar restrictions are defined at  ${\rm Pic}^0$ level too.
These restrictions are homomorphisms of the exact sequences  (\ref{eq:PIC}) and (\ref{eq:PICZ}).

Furthermore, we define a section of (\ref{eq:PICZ}) by
$s_Z(l'):=
{\mathcal O}_{\widetilde{X}}(l')|_{Z}$, they also satisfy $c_1\circ s_Z={\rm id}_{L'}$. 
We write  ${\mathcal O}_{Z}(l')$ for $s_Z(l')$, and we call them
 {\it natural line bundles } on $Z$.

We also use the notations ${\rm Pic}^{l'}(\widetilde{X}):=c_1^{-1}(l')
\subset {\rm Pic}(\widetilde{X})$ and
${\rm Pic}^{l'}(Z):=c_1^{-1}(l')\subset{\rm Pic}(Z)$
respectively. Multiplication by $\calO_{\widetilde{X}}(-l')$, or by
$\calO_Z(-l')$, provides natural affine--space isomorphisms
${\rm Pic}^{l'}(\widetilde{X})\to {\rm Pic}^0(\widetilde{X})$ and
${\rm Pic}^{l'}(Z)\to {\rm Pic}^0(Z)$.

\bekezdes\label{bek:restrnlb} {\bf Restricted natural line bundles.}
The following warning is appropriate.
Note that if $\tX_1$ is a connected small convenient  neighbourhood
of the union of some of the exceptional divisors (hence $\tX_1$ also stays as the resolution
of the singularity obtained by contraction of that union of exceptional  curves), then one can repeat the definition of
natural line bundles at the level of $\tX_1$ as well (as a splitting of (\ref{eq:PIC}) applied for
$\tX_1$). However, the restriction to
$\tX_1$ of a natural line bundle of $\tX$ (even of type
$\calO_{\tX}(l)$ with $l$ integral cycle supported on $E$)  is usually not natural on $\tX_1$:
$\calO_{\tX}(l')|_{\tX_1}\not= \calO_{\tX_1}(R(l'))$
 (where $R:H^2(\tX,\Z)\to H^2(\tX_1,\Z)$ is the natural cohomological 
 restriction), though their Chern classes coincide.

Therefore, in inductive procedure when such restriction is needed,
 we will deal with the family of {\it restricted natural line bundles}. This means the following.
If we have two resolution spaces $\tX_1 \subset \tX$ with resolution graphs $\mathcal{T}_1 \subset \mathcal{T}$ and we have a Chern class $l' \in L'$, then we denote 
by $\calO_{\tX_1}(l') = \calO_{\tX}(l') | \tX_1$ the restriction of the natural line bundle $\calO_{\tX}(l')$.
Similarly if $Z$ is an effective integer cycle on $\tX$ with maybe $|Z| \neq E$, then we denote $\calO_{Z}(l') = \calO_{\tX}(l') | Z$.

Furthermore if $\calL$ is a line bundle on $\tX_1$, then we denote $\calL(l') = \calL \otimes \calO_{\tX}(l')$.
Similarly if $Z$ is  an effective integer cycle on $\tX$ and $\calL$ is a line bundle on $Z$, then we denote $\calL(l') = \calL \otimes \calO_Z(l')$.

\bekezdes \label{bek:ansemgr} {\bf The analytic semigroups.} \
By definition, the analytic semigroup associated with the resolution $\tX$ is

\begin{equation}\label{eq:ansemgr}
\calS'_{an}:= \{l'\in L' \,:\,\calO_{\tX}(-l')\ \mbox{has no  fixed components}\}.
\end{equation}
It is a subsemigroup of $\calS'$. One also sets $\calS_{an}:=\calS_{an}'\cap L$, a subsemigroup
of $\calS$. In fact, $\calS_{an}$
consists of the restrictions   ${\rm div}_E(f)$ of the divisors
${\rm div}(f\circ \phi)$ to $E$, where $f$ runs over $\calO_{X,o}$. Therefore, if $s_1, s_2\in \calS_{an}$, then
${\rm min}\{s_1,s_2\}\in \calS_{an}$ as well (take the generic linear combination of the corresponding functions).
In particular,  for any $l\in L$, there exists a {\it unique} minimal
$s\in \calS_{an}$ with $s\geq l$.

Similarly, for any $h\in H=L'/L$ set $\calS'_{an,h}:\{l'\in \calS_{an}\,:\, [l']=h\}$.
Then for any  $s'_1, s'_2\in \calS_{an,h}$ one has
${\rm min}\{s'_1,s'_2\}\in \calS_{an,h}$, and so
for any $l'\in L'$   there exists a unique minimal
$s'\in \calS_{an,[l']}$ with $s'\geq l'$.

\subsection{Notations.} We will write $Z_{min}\in L$ for the  {\it minimal} (or fundamental, or Artin) cycle, which is
the minimal non--zero cycle of $\calS'\cap L$ \cite{Artin62,Artin66}. Yau's {\it maximal ideal cycle}
$Z_{max}\in L$ defines the  divisorial part of the pullback of the maximal ideal $\m_{X,o}\subset \calO_{X,o}$, i.e.
 $\phi^*{\m_{X,o}}\cdot \calO_{\widetilde{X}}=\calO_{\widetilde{X}}(-Z_{max})\cdot \cali$,
where $\cali$ is an ideal sheaf with 0--dimensional support \cite{Yau1}. In general $Z_{min}\leq Z_{max}$.

\section{Effective Cartier divisors and Abel maps}

  In this section we review some needed material from \cite{NNA1}.

We fix a good resolution $\phi:\tX\to X$ of a normal surface singularity,
whose link is a rational homology sphere. 

\subsection{} \label{ss:4.1}
Let us fix an effective integral cycle  $Z\in L$, $Z\geq E$. (The restriction $Z\geq E$ is imposed by the
easement of the presentation, everything can be adopted  for $Z>0$).

Let $\eca(Z)$  be the space of effective Cartier (zero dimensional) divisors supported on  $Z$.
Taking the class of a Cartier divisor provides  a map
$c:\eca(Z)\to \pic(Z)$.
Let  $\eca^{l'}(Z)$ be the set of effective Cartier divisors with
Chern class $l'\in L'$, that is,
$\eca^{l'}(Z):=c^{-1}(\pic^{l'}(Z))$.

We consider the restriction of $c$, $c^{l'}:\eca^{l'}(Z)
\to \pic^{l'}(Z)$ too, sometimes still denoted by $c$.

As usual, we say that $\calL\in \pic^{l'}(Z)$ has no fixed components if
\begin{equation}\label{eq:H_0}
H^0(Z,\calL)_{reg}:=H^0(Z,\calL)\setminus \bigcup_v H^0(Z-E_v, \calL(-E_v))
\end{equation}
is non--empty.
Note that $H^0(Z,\calL)$ is a module over the algebra
$H^0(\calO_Z)$, hence one has a natural action of $H^0(\calO_Z^*)$ on
$H^0(Z, \calL)_{reg}$. This second action is algebraic and free.  Furthermore,
 $\calL\in \pic^{l'}(Z)$ is in the image of $c$ if and only if
$H^0(Z,\calL)_{reg}\not=\emptyset$. In this case, $c^{-1}(\calL)=H^0(Z,\calL)_{reg}/H^0(\calO_Z^*)$.

One verifies that $\eca^{l'}(Z)\not=\emptyset$ if and only if $-l'\in \calS'\setminus \{0\}$. Therefore, it is convenient to modify the definition of $\eca$ in the case $l'=0$: we (re)define $\eca^0(Z)=\{\emptyset\}$,
as the one--element set consisting of the `empty divisor'. We also take $c^0(\emptyset):=\calO_Z$, then we have
\begin{equation}\label{eq:empty}
\eca^{l'}(Z)\not =\emptyset \ \ \Leftrightarrow \ \ l'\in -\calS'.
\end{equation}
If $l'\in -\calS'$  then
  $\eca^{l'}(Z)$ is a smooth variety of dimension $(l',Z)$. Moreover,
if $\calL\in \im (c^{l'}(Z))$ (the image of the map $c^{l'}$)
then  the fiber $c^{-1}(\calL)$
 is a smooth, irreducible quasiprojective variety of  dimension
 \begin{equation}\label{eq:dimfiber}
\dim(c^{-1}(\calL))= h^0(Z,\calL)-h^0(\calO_Z)=
 (l',Z)+h^1(Z,\calL)-h^1(\calO_Z).
 \end{equation}

Consider again  a Chern class $l'\in-\calS'$ as above.
The $E^*$--support $I(l')\subset \calv$ of $l'$ is defined via the identity  $l'=\sum_{v\in I(l')}a_vE^*_v$ with all
$\{a_v\}_{v\in I}$ nonzero. Its role is the following.

Besides the Abel map $c^{l'}(Z)$ one can consider its `multiples' $\{c^{nl'}(Z)\}_{n\geq 1}$ as well. It turns out
(cf. \cite[\S 6]{NNA1}) that $n\mapsto \dim \im (c^{nl'}(Z))$
is a non-decreasing sequence, and  $\im (c^{nl'}(Z))$ is an affine subspace for $n\gg 1$, whose dimension $e_Z(l')$ is independent of $n\gg 0$, and essentially it depends only
on $I(l')$.
We denote the linearisation of this affine subspace by $V_Z(I) \subset H^1(\calO_Z)$ or if the cycle $Z \gg 0$, then $ V_{\tX}(I) \subset H^1(\calO_{\tX})$.

Moreover, by \cite[Theorem 6.1.9]{NNA1},
\begin{equation*}\label{eq:ezl}
e_Z(l')=h^1(\calO_Z)-h^1(\calO_{Z|_{\calv\setminus I(l')}}),
\end{equation*}
where $Z|_{\calv\setminus I(l')}$ is the restriction of the cycle $Z$ to its $\{E_v\}_{v\in \calv\setminus I(l')}$
coordinates.

If $Z\gg 0$ (i.e. all its $E_v$--coordinated are very large), then (\ref{eq:ezl}) reads as
\begin{equation*}\label{eq:ezlb}
e_Z(l')=h^1(\calO_{\tX})-h^1(\calO_{\tX(\calv\setminus I(l'))}),
\end{equation*}
where $\tX(\calv\setminus I(l'))$ is a convenient small tubular neighbourhood of $\cup_{v\in \calv\setminus I(l')}E_v$.

Let $\Omega _{\tX}(I)$ be the subspace of $H^0(\tX\setminus E, \Omega^2_{\tX})/ H^0(\tX,\Omega_{\tX}^2)$ generated by differential forms which have no poles along $E_I\setminus \cup_{v\not\in I}E_v$.
Then, cf. \cite[\S8]{NNA1},
\begin{equation*}\label{eq:ezlc}
h^1(\calO_{\tX(\calv\setminus I)})=\dim \Omega_{\tX}(I).
\end{equation*}

Similarly let $\Omega _{Z}(I)$ be the subspace of $H^0(\calO_{\tX}(K + Z))/ H^0(\calO_{\tX}(K))$ generated by differential forms which have no poles along $E_I\setminus \cup_{v\not\in I}E_v$.
Then, cf. \cite[\S8]{NNA1},
\begin{equation*}\label{eq:ezlc}
h^1(\calO_{Z_{(\calv\setminus I)}})=\dim \Omega_{Z}(I).
\end{equation*}

We have also the following duality from \cite{NNA1} supporting the equalities above:

\begin{theorem}\cite{NNA1}\label{th:DUALVO}
Via Laufer duality one has  $V_{\tX}(I)^*=\Omega_{\tX}(I)$ and $V_{Z}(I)^*=\Omega_Z(I)$.
\end{theorem}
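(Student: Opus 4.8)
The statement to be proved is Theorem~\ref{th:DUALVO}, the Laufer-duality identities $V_{\tX}(I)^*=\Omega_{\tX}(I)$ and $V_Z(I)^*=\Omega_Z(I)$. The plan is to deduce both from a single pairing construction together with the dimension count already recorded above, namely $e_Z(l')=h^1(\calO_Z)-h^1(\calO_{Z|_{\calv\setminus I}})$ on the one side and $h^1(\calO_{Z|_{\calv\setminus I}})=\dim\Omega_Z(I)$ on the other, so that $\dim V_Z(I)=e_Z(l')=h^1(\calO_Z)-\dim\Omega_Z(I)$. Since $\dim H^1(\calO_Z)=h^1(\calO_Z)$ and $\Omega_Z(I)\subseteq H^0(\calO_{\tX}(K+Z))/H^0(\calO_{\tX}(K))$, which is Laufer-dual to $H^1(\calO_Z)$ of the same dimension, it suffices to show that under Laufer's perfect pairing $H^1(\calO_Z)\times \big(H^0(\calO_{\tX}(K+Z))/H^0(\calO_{\tX}(K))\big)\to\C$ the subspace $\Omega_Z(I)$ is exactly the annihilator of $V_Z(I)$; by the matching dimensions, containment in one direction already forces equality. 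The $\tX$ version follows by the same argument using $Z\gg0$ and the formal-neighbourhood identifications, or simply by passing to the limit.

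First I would recall Laufer's duality in the precise form we need: for an effective cycle $Z$ there is a natural perfect pairing between $H^1(Z,\calO_Z)$ and $H^0(\tX,\Omega^2_{\tX}(Z))/H^0(\tX,\Omega^2_{\tX})$ (equivalently $H^0(\calO_{\tX}(K+Z))/H^0(\calO_{\tX}(K))$ after identifying $\Omega^2_{\tX}=\calO_{\tX}(K)$), given by a residue/cup-product: represent a class in $H^1(\calO_Z)$ by a \v{C}ech cocycle on a cover of a punctured neighbourhood and integrate its product with a $2$-form having poles bounded by $Z$ along a cycle separating $E$. Then I would recall, from \cite[\S6--\S8]{NNA1}, the concrete description of $V_Z(I)$: it is the linearisation of the affine subspace $\im(c^{nl'}(Z))$ for $n\gg0$, and a differential-geometric description of it is as the image of the tangent map of the Abel map, which in turn is computed by the cohomology exact sequence $0\to\calO_{\tX}(-D)\to\calO_{\tX}\to\calO_D\to0$ for $D\in\eca^{nl'}(Z)$ generic; concretely $V_Z(I)=\ker\big(H^1(\calO_Z)\to H^1(\calO_{Z|_{\calv\setminus I}})\big)$, i.e. the classes supported on the $E_I$-part.

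The core step is then the orthogonality computation: a $2$-form $\omega$ with poles along $E_I$ only (no poles along $E_v$, $v\notin I$) pairs to zero against every cohomology class in $V_Z(I)$, i.e. against every class coming from the $E_I$-coordinates. I would argue this locally: a cocycle representing such a class can be chosen supported near $E_I$, away from the components $E_v$, $v\notin I$, where $\omega$ is holomorphic, and the residue integral of a holomorphic form against such a cocycle vanishes by the usual argument (one can push the contour off $E_I$). This gives $\Omega_Z(I)\subseteq V_Z(I)^\perp$. Comparing dimensions, $\dim\Omega_Z(I)=h^1(\calO_{Z|_{\calv\setminus I}})$ while $\dim V_Z(I)^\perp=\dim H^1(\calO_Z)-\dim V_Z(I)=h^1(\calO_Z)-e_Z(l')=h^1(\calO_{Z|_{\calv\setminus I}})$, so the inclusion is an equality, which is precisely $V_Z(I)^*=\Omega_Z(I)$. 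The $\tX$-statement follows verbatim with $Z\gg0$ using $h^1(\tX,\calL)=h^1(Z,\calL)$ and the identification of $\Omega_{\tX}(I)$ as the corresponding limit of $\Omega_Z(I)$.

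I expect the main obstacle to be the honest verification that the localised cocycle representing a class in $V_Z(I)$ can be chosen to avoid the components $E_v$ with $v\notin I$, so that the residue pairing with an $I$-pole form genuinely vanishes — this requires using the explicit identification of $V_Z(I)$ with $\ker\big(H^1(\calO_Z)\to H^1(\calO_{Z|_{\calv\setminus I}})\big)$ from \cite[Theorem 6.1.9]{NNA1} and the compatibility of Laufer duality with restriction to subcycles, rather than any computation with the Abel map itself. Once that compatibility is in place, everything else is a dimension count that is already assembled in the displayed formulas preceding the statement.
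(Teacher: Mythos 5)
This theorem is not proved in the paper at all: it is imported verbatim from \cite{NNA1} as background, so there is no in-paper proof to compare against. Your argument is correct and is essentially the one given in \cite{NNA1}: the tangent image of the Abel map at a divisor $D$ supported on $\cup_{v\in I}E_v$ is annihilated by exactly the forms without poles on $D$, hence $\Omega_Z(I)\subseteq V_Z(I)^{\perp}$, and the two displayed dimension formulas $e_Z(l')=h^1(\calO_Z)-h^1(\calO_{Z|_{\calv\setminus I}})$ and $\dim\Omega_Z(I)=h^1(\calO_{Z|_{\calv\setminus I}})$ force equality. The one point to state explicitly is that $V_Z(I)^*$ must be read as the annihilator of $V_Z(I)$ under the Laufer pairing (the dimensions of $V_Z(I)$ and $\Omega_Z(I)$ are complementary in $h^1(\calO_Z)$, not equal), and that the identification of $V_Z(I)$ with the span of the tangent images (equivalently with $\ker\bigl(H^1(\calO_Z)\to H^1(\calO_{Z|_{\calv\setminus I}})\bigr)$) is the content of \cite[Theorem 6.1.9]{NNA1}, which you are entitled to assume here.
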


\section{Laufer's results}\label{ss:LauferDef}
In this section we review some results of Laufer regarding deformations of the analytic structure on a resolution space of a normal surface singularity with fixed resolution graph
(and deformations of non--reduced analytic spaces supported on  exceptional curves) \cite{LaDef1}.

First, let us fix a normal surface singularity $(X,o)$ and a good resolution $\phi:(\tX,E)\to (X,o)$
with reduced exceptional curve $E=\phi^{-1}(o)$, whose irreducible decomposition is $\cup_{v\in\calv}E_v$ and dual graph $\Gamma$.
Let $\cali_v$ be the ideal sheaf of $E_v\subset \tX$. Then for arbitrary
 positive integers $\{r_v\}_{v\in \calv}$ one defines two objects, an analytic one and a topological (combinatorial) one.
 At analytic level, one sets  the ideal sheaf  $\cali(r):=\prod_v \cali_v^{r_v}$
 and the non--reduces space $\calO_{Z(r)}:=\calO_{\tX}/\cali(r)$ supported on $E$.

  The topological object is a graph with multiplicities, denoted by $\Gamma(r)$. As a non--decorated graph coincides with the graph $\Gamma$ without decorations. Additionally each vertex $v$ has a
  `multiplicity decoration' $r_v$, and we put also the self--intersection  decoration $E_v^2$
  whenever $r_v>1$. (Hence, the vertex $v$ does not inherit the self--intersection decoration
  of $v$ if $r_v=1$).
 Note that the  abstract 1--dimensional analytic space $Z(r)$ determines by its reduced structure
 the shape of the dual graph $\Gamma$, and by its non--reduced structure
 all the multiplicities $\{r_v\}_{v\in\calv}$, and additionally,
 all the self--intersection numbers $E_v^2$ for those $v$'s when  $r_v>1$
 (see \cite[Lemma 3.1]{LaDef1}).

 We say that the space $Z(r)$ has topological type $\Gamma(r)$.

Clearly, the analytic structure of $(X,o)$, hence of $\tX$ too, determines each 1--dimensional
non--reduced space $\calO_{Z(r)}$.  The converse is also true in the following sense.
\begin{theorem} \ \cite[Th. 6.20]{Lauferbook},\cite[Prop. 3.8]{LaDef1}
(a) Consider an abstract 1--dimensional space $\calO_{Z(r)}$, whose topological type
$\Gamma(r)$ can be completed to a negative definite graph $\Gamma$ (or, lattice $L$).
Then there exists a 2--dimensional
manifold $\tX$ in which $Z(r)$ can be embedded with support $E$
such that the intersection matrix inherited from the embedding $E\subset \tX$ is the
negative definite lattice $L$.
In particular (since by Grauert theorem the exceptional locus $E$ in $\tX$ can be contracted to a normal singularity),
any such $Z(r)$ is always associated with a normal surface singularity (as above).

(b)  Suppose that we have two singularities $(X,o)$ and $(X',o)$ with good resolutions as above with the
same resolution graph $\Gamma$. Depending solely on $\Gamma$ the integers $\{r_v\}_v$ may be chosen so
large that if $\calO_{Z(r)}\simeq \calO_{Z'(r)}$, then $E\subset \tX$ and $E'\subset \tX'$ have
biholomorphically equivalent neighbourhoods via a map taking $E$ to $E'$.
(For a concrete estimate how large $r$ should be see Theorem 6.20 in \cite{Lauferbook}.)
\end{theorem}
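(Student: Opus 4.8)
The plan is to prove both parts by an obstruction-theoretic induction on the thickening order, using throughout that the exceptional configuration $E$ is one-dimensional and proper, so that coherent sheaves on $E$ have no cohomology above degree $1$, and that negative definiteness of $L$ supplies the three standard tools: Grauert's contractibility criterion, the comparison theorem for formal functions, and algebraization.

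For part (a) I would first dispose of the reduced curve: a tree of copies of $\bP^1$ meeting transversally, with arbitrarily prescribed self-intersections, is realized as the exceptional set $E\subset\tX_0$ of a smooth complex surface germ simply by holomorphically plumbing line bundles over $\bP^1$. Given this, I would build the embedding of $Z(r)$ layer by layer: writing $Z^{(m)}$ for the subscheme with multiplicities $\min(r_v,m)$ and assuming inductively that $Z^{(m)}$ is embedded in a smooth germ $\tX_m$ inducing the prescribed intersection numbers on every $E_v$ with $r_v\le m$, the obstruction to extending this to an embedding of $Z^{(m+1)}$ in a smooth germ lies in an $H^2$ of a coherent sheaf supported on $E$, hence vanishes, while the extensions form a torsor under a finite-dimensional $H^1$. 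For each $v$ with $r_v=m+1$ the new layer records $E_v^2$ for the first time, and I would choose the extension so that this self-intersection equals the prescribed value in $L$; for $v$ with $r_v>m+1$ consistency is automatic, since $\calO_{Z(r)}$ already fixes those numbers. After finitely many steps one obtains $Z(r)\subset\tX$ with intersection lattice $L$; as $L$ is negative definite, $E\subset\tX$ contracts by Grauert to a normal surface singularity $(X,o)$, so $\tX$ is a neighbourhood of $E$ in a resolution with graph $\Gamma$.

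For part (b), given two singularities with the same graph $\Gamma$ and an isomorphism $\calO_{Z(r)}\simeq\calO_{Z'(r)}$, I would again work in stages, extending the isomorphism through the successive thickenings $Z^{(m)}\simeq {Z'}^{(m)}$. The obstruction to lifting from order $m$ to order $m+1$ lies in an $H^1$ of a coherent sheaf on $E$ built from the conormal sheaves of the two embeddings; these groups are finite-dimensional, and by the theorem on formal functions together with Grauert's comparison theorem the relevant inverse systems of cohomology groups are essentially constant, with a stabilization level depending only on the combinatorics of $\Gamma$. Hence, once $r$ is large enough in terms of $\Gamma$, every obstruction already vanishes at finite order, the lifts are compatible, and they assemble to an isomorphism $\widehat\calO_{\tX,E}\simeq\widehat\calO_{\tX',E'}$ of the formal completions along the exceptional sets. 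Finally, a formal isomorphism between neighbourhoods of a negative definite exceptional curve is convergent after contraction — by Artin's approximation theorem applied to the germs $(X,o)$ and $(X',o)$ — so $E\subset\tX$ and $E'\subset\tX'$ have biholomorphic neighbourhoods carrying $E$ to $E'$.

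I expect the main obstacle to be part (b): identifying precisely which coherent sheaves on $E$ control the extension of isomorphisms, and proving that the associated cohomology comparison maps stabilize at a level determined solely by $\Gamma$, so as to produce the explicit bound on $r$; the subsequent passage from a formal to an honest biholomorphism is the point where negative definiteness is again indispensable, through algebraization.
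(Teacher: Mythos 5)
This theorem is not proved in the paper at all: it is quoted verbatim from Laufer (\cite[Th.~6.20]{Lauferbook} and \cite[Prop.~3.8]{LaDef1}) as background for the deformation arguments in Section 9, so there is no in-paper proof to compare yours against. Judged on its own terms, your reconstruction of part (a) is essentially the standard argument and matches Laufer's in spirit: one lifts the local charts $\C\{x,y\}/(x^ay^b)$ and their transition data to two-dimensional charts, the failure of the cocycle condition lives in an $H^2$ of a coherent sheaf on the one-dimensional space $E$ and hence vanishes, and Grauert's contractibility criterion finishes. One correction there: the self-intersection $E_v^2$ is \emph{not} ``recorded for the first time'' at layer $m=r_v$; the conormal sheaf $\cali_v/\cali_v^2$, hence $E_v^2$, is already visible at multiplicity $2$ for every $v$ with $r_v\geq 2$ (this is exactly the paper's remark that $Z(r)$ determines $E_v^2$ whenever $r_v>1$). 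The only self-intersections the embedding is free to impose are those with $r_v=1$, and the compatibility for $r_v\geq 2$ is a hypothesis (that $\Gamma(r)$ completes to $\Gamma$), not something arranged by the induction.

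The genuine gap is in part (b), precisely at the point you flagged as the main obstacle. The whole content of the statement is that the threshold for $r$ depends \emph{solely on $\Gamma$}, and the tools you invoke do not deliver this. Grauert's comparison theorem and the theorem on formal functions do give that the inverse system $m\mapsto H^1$ stabilizes, but only for each fixed analytic structure, with a stabilization level that a priori depends on that structure; quoting them yields ``for each pair $(X,o)$, $(X',o)$ there is some large enough $r$,'' which is strictly weaker than the theorem. The actual mechanism is a uniform vanishing statement: the obstruction to prolonging the isomorphism from order $m$ to order $m+1$ lies in $H^1(E,\theta\otimes \cali^m/\cali^{m+1})$ (with $\theta$ the relevant tangent/vector-field sheaf), and $\cali^m/\cali^{m+1}$ restricted to each $E_v\cong\bP^1$ is a line bundle whose degree is a linear function of $m$ with positive leading coefficient read off from the negative definite intersection matrix; Serre-type vanishing on the curves then kills $H^1$ for all $m\geq r_0(\Gamma)$ with $r_0$ computed from the graph alone. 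This degree computation is where negative definiteness really enters part (b) (not only through the final Artin-approximation step), and it is exactly the content of Laufer's explicit estimate. Without it your argument proves existence of \emph{some} sufficient $r$ for each pair of singularities, not a bound depending only on $\Gamma$. A secondary point worth a sentence in any write-up: after Artin approximation gives a biholomorphism of the contracted germs $(X,o)\simeq(X',o)$, one still has to check that it lifts to a biholomorphism of the resolution neighbourhoods carrying $E$ to $E'$; this is where one uses that the formal isomorphism was constructed compatibly with the exceptional sets rather than deduced abstractly downstairs.
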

In particular, in the deformation theory of $\tX$ it is enough to consider the deformations of
non--reduced spaces of type $\calO_{Z(r)}$.

Fix a non--reduced 1--dimensional space $Z=Z(r)$ with topological type $\Gamma(r)$.
Following Laufer let's choose also a closed subspace $Y$ of $Z$ (whose support can be smaller, it can be even empty).
More precisely, $(Z,Y)$ locally is isomorphic with $(\C\{x,y\}/(x^ay^b),\C\{x,y\}/(x^cy^d))$,
where $a\geq c\geq 0$, $b\geq d\geq 0$, $a>0$.
  The ideal of $Y$ in $\calO_Z$ is denoted by $\cali_Y$.

\begin{definition}\label{def:1}\ \cite[Def. 2.1]{LaDef1}
 A deformation of $Z$, fixing $Y$, consists of the following data:

(i) There  exists an analytic space $\calz$ and a proper map $\omegl:\calz\to Q$, where
$Q$ is a manifold containing a distinguished point $0$.

(ii) Over a point $q\in Q$ the fiber $Z_q$ is the subspace of $\calz$ determined by the ideal
sheaf $\omegl^* (\mathfrak{m}_q)$ (where $\mathfrak{m}_q$ is the maximal ideal of $q$). $Z$
is isomorphic with $Z_0$, usually they are  identified.

(iii) $\omegl$ is a trivial deformation of $Y$ (that is, there is a closed subspace
$\caly\subset \calz$ and the restriction of $\omegl$ to $\caly$ is a trivial deformation of $Y$).

(iv) $\omegl$ is {\it locally trivial} in a way which extends the trivial deformation $\omegl|_{\caly}$.
This means  that for ant $q\in Q$ and $z\in \calz$ there exist a neighborhood $W$ of $z$ in $\calz$,
a neighborhood $V$ of $z$ in $Z_q$, a neighborhood $U$ of $q$ in $Q$, and an isomorphism
$\phi:W\to V\times U$ such that $\omegl|_W=pr_2\circ \phi$ (compatibly with the trivialization
of $\caly$ from (iii)), where $pr_2$ is the second projection; for more see  [loc.cit.].
\end{definition}
One verifies that under deformations (with connected base space) the topological type of the fibers
$Z_q$,  namely $\Gamma(r)$, stays constant (see \cite[Lemma 3.1]{LaDef1}).

\begin{definition}\label{def:2}\ \cite[Def. 2.4]{LaDef1}
A deformation $\omegl:\calz\to Q$ of $Z$, fixing $Y$, is complete at $0$ if, given any deformation
$\tau:{\mathcal P}\to R$ of $Z$ fixing $Y$, there is a neighbourhood $R'$ of $0$ in $R$  and a
 holomorphic map $f:R'\to Q$ such that $\tau$ restricted to $\tau^{-1}(R')$ is the deformation
$f^*\omegl$. Furthermore, $\omegl$ is complete if it is complete at each point $q\in Q$.
\end{definition}
Laufer proved the following  results.
\begin{theorem}\label{th:La2}\ \cite[Theorems 2.1, 2.3, 3.4, 3.6]{LaDef1}
Let $\theta_{Z,Y}={\mathcal Hom}_{Z}(\Omega^1_Z,\cali_Y)$ be the sheaf of germs of vector fields on $Z$  which vanish on $Y$, and let $\omegl :\calz\to Q$ be a deformation of $Z$, fixing $Y$.

 (a) If the Kodaira--Spencer map
 $\rho_0:T_0Q\to H^1(Z,\theta_{Z,Y})$ is surjective, then $\omegl$ is complete at $0$.

 (b) If $\rho_0$ is surjective than $\rho_q$ is surjective for all $q$ sufficiently near to $0$.

 (c)  There exists a deformation $\omegl$ with $\rho_0$ bijective. In such a case in a neighbourhood $U$ of $0$ the deformation is essentially unique, and  the fiber above $q$ is isomorphic to $Z$
 for only at most countably many $q$ in $U$.
\end{theorem}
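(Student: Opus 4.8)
\medskip
\noindent\emph{Sketch of the argument.} The plan is to recognize the three assertions as the standard package of deformation theory --- the infinitesimal completeness criterion, openness of completeness, and existence and essential uniqueness of a semiuniversal deformation --- applied to the functor $\mathrm{Def}(Z,Y)$ of deformations of $Z$ fixing $Y$, whose infinitesimal theory is governed by the cohomology of $\theta_{Z,Y}$: first order deformations fixing $Y$ are classified by $H^1(Z,\theta_{Z,Y})$ and infinitesimal automorphisms of the pair by $H^0(Z,\theta_{Z,Y})$. The decisive structural input, which I would isolate at the outset, is that $Z$ is a one--dimensional analytic space, so $H^i(Z,\theta_{Z,Y})=0$ and $H^i(Z_q,\theta_{Z_q,Y_q})=0$ for $i\geq 2$ on every fibre; hence the functor is unobstructed, the relevant cohomology is compatible with base change, and the inductive extension problems below never meet a cohomological obstruction beyond $H^1$.

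For part (a), given an arbitrary deformation $\tau:\calP\to R$ of $Z$ fixing $Y$, I would construct the classifying morphism $f:R'\to Q$ and an isomorphism $f^*\omegl\cong\tau$ by successive approximation along the powers of $\m=\m_{R,0}$. Assuming $f$ and the isomorphism are already built over the $n$--th infinitesimal neighbourhood $R_n$ of $0$, smoothness of $Q$ lets the map extend to some $f_{n+1}:R_{n+1}\to Q$; for any such extension the obstruction to extending the isomorphism across the square--zero thickening $R_{n+1}\to R_n$ lies in $H^1(Z,\theta_{Z,Y})\otimes_{\C}(\m^{n+1}/\m^{n+2})$, and replacing $f_{n+1}$ by another extension of $f_n$ changes that obstruction exactly through $\rho_0\otimes\mathrm{id}$. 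Surjectivity of $\rho_0$ therefore lets me choose $f_{n+1}$ annihilating the obstruction, the isomorphism extends, and the induction yields a formal solution; I would then invoke Grauert's convergence theorem for deformations of analytic germs to replace this formal $f$ by a genuine holomorphic one on a neighbourhood $R'$ of $0$, concluding that $\omegl$ is complete at $0$. For part (b), writing $\mathrm{coker}(\rho_q)$ for the global family $\omegl:\calz\to Q$ in terms of the cohomology of a coherent sheaf on $Z_q$ and using that the vanishing $H^2(Z_q,\theta_{Z_q,Y_q})=0$ forces base--change compatibility, the function $q\mapsto\dim\mathrm{coker}(\rho_q)$ is upper semicontinuous, so $\{q:\rho_q\ \text{surjective}\}$ is open, which is (b).

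For part (c), unobstructedness furnishes a formal semiuniversal deformation smooth over $H^1(Z,\theta_{Z,Y})$, and Grauert's theorem converts it into an analytic deformation $\omegl:\calz\to Q$ with $Q$ a manifold and $\rho_0$ bijective. For essential uniqueness I would feed a second such deformation $\omegl':\calz'\to Q'$ into part (a) to obtain a comparison germ $g:(Q,0)\to(Q',0)$ with $g^*\omegl'\cong\omegl$; functoriality of the Kodaira--Spencer map gives $\rho_0=\rho_0'\circ dg_0$, and since both $\rho_0,\rho_0'$ are isomorphisms so is $dg_0$, hence $g$ is a biholomorphism of germs identifying the two families. For the countability of $S:=\{q\in U:Z_q\cong Z\}$, I would first note that $Z_q\cong Z$ forces $\dim H^1(Z_q,\theta_{Z_q,Y_q})=\dim H^1(Z,\theta_{Z,Y})=\dim Q$, so $\rho_q$ is bijective, in particular injective; then, examining the analytic space of isomorphisms $Z\xrightarrow{\sim}Z_q$ over $U$ --- whose fibres over $S$ are torsors under $\mathrm{Aut}(Z)$ and whose tangent spaces map onto $\ker\rho_q$ modulo the Lie algebra $H^0(Z,\theta_{Z,Y})$ of $\mathrm{Aut}(Z)$ --- a dimension count shows that no component of $S$ can be positive--dimensional; so $S$ is discrete, hence at most countable.

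Two points are genuinely substantial, as opposed to the formal bookkeeping. The first, and the main obstacle, is the passage from the formally constructed maps and families in (a) and (c) to honest analytic ones: this is where the theorem leaves pure homological algebra and rests on Grauert's convergence theorem, and it is the reason the statement is due to Laufer; I would follow \cite{LaDef1} here. The second, smaller one is the analytic--geometric clean--up in the countability statement --- making precise that the isomorphism locus decomposes into countably many analytic germs and that the relevant space of isomorphisms is finite--dimensional with the claimed tangent spaces --- which is again carried out in \cite{LaDef1} and ultimately uses the one--dimensionality of $Z$ through the finiteness of all cohomology involved.
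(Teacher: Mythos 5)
The paper does not prove this statement at all: it is recalled verbatim from Laufer with the attribution \cite[Theorems 2.1, 2.3, 3.4, 3.6]{LaDef1} and is used later as a black box, so there is no in-paper argument to measure your sketch against. The only expectation here was to recognize it as a cited classical result.

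Taken on its own terms, your outline is a reasonable reconstruction of the standard deformation-theoretic package and is broadly the route Laufer follows: completeness from surjectivity of the Kodaira--Spencer map by successive approximation over infinitesimal neighbourhoods, openness by semicontinuity, and existence of a deformation with bijective $\rho_0$ from unobstructedness in the one-dimensional situation. Two caveats if you intended this as an actual proof rather than a plausibility argument. First, the convergence step: you outsource it to ``Grauert's convergence theorem'', but Laufer's 1973 paper predates the general existence theory for versal deformations of compact complex spaces and proves convergence directly, by majorant-series estimates on a finite Stein covering of $Z$, using essentially that $Z$ is a compact one-dimensional (possibly non-reduced) space so that everything is governed by a finite \v{C}ech complex; this is the genuinely hard analytic content and cannot be waved at. Second, the countability claim in (c) is the weakest link as written: from $Z_q\cong Z$ you infer bijectivity of $\rho_q$, but the concluding dimension count (``no component of $S$ can be positive-dimensional'') is asserted rather than proved --- one must show that the isomorphism locus is a countable union of analytic germs and that a positive-dimensional such germ would contradict injectivity of $\rho_q$ via local triviality of the restricted family, which is exactly the delicate part of Laufer's argument. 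In the context of this paper the correct move is simply to cite \cite{LaDef1}, as the author does.
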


\bekezdes\label{bek:funk} {\bf Functoriality} 

Let $Z'$ be a closed subspace of $Z$ such that
$\cali_{Z'}\subset \cali_Y\subset \calO_Z$. Then there is a natural reduction of pairs
$(\calO_Z,\calO_Y)\to (\calO_{Z'},\calO_Y)$. Hence, any deformation $\omegl:\calz\to Q$
of $Z$ fixing $Y$ reduces to a deformation $\omegl':\calz'\to Q$
of $Z'$ fixing $Y$. Furthermore, of $\omegl$ is complete then $\omegl'$ is automatically
complete as well (since $H^1(Z,\theta_{Z,Y})\to H^1(Z',\theta_{Z',Y})$ is onto).

\section{Relatively generic analytic structures on surface singularities}

In this section we wish to summarise the results from \cite{R} about relatively generic analytic structures what we need in this article, as always we again deal only with rational homology sphere
resolution graphs.

We consider an effective integer cycle $Z \geq E$ on a resolution $\tX$ with resolution graph $\mathcal{T}$, and a smaller cycle $Z_1 \leq Z$, where we denote $|Z_1| = \calv_1$ and the possibly nonconnected subgraph corresponding to it by $\mathcal{T}_1$.

We have the restriction map $r: \pic(Z)\to \pic(Z_1)$ and one has also the (cohomological) restriction operator
  $R_1 : L'(\mathcal{T}) \to L_1':=L'(\mathcal{T}_1)$
(defined as $R_1(E^*_v(\mathcal{T}))=E^*_v(\mathcal{T}_1)$ if $v\in \calv_1$, and
$R_1(E^*_v(\mathcal{T}))=0$ otherwise).

For any $\calL\in \pic(Z)$ and any $l'\in L'(\mathcal{T})$ it satisfies
\begin{equation*}
c_1(r(\calL))=R_1(c_1(\calL)).
\end{equation*}

In particular, we have the following commutative diagram as well:

\begin{equation*}  
\begin{picture}(200,40)(30,0)
\put(50,37){\makebox(0,0)[l]{$
\ \ \eca^{l'}(Z)\ \ \ \ \ \stackrel{c^{l'}(Z)}{\longrightarrow} \ \ \ \pic^{l'}(Z)$}}
\put(50,8){\makebox(0,0)[l]{$
\eca^{R_1(l')}(Z_1)\ \ \stackrel{c^{R_1(l')}(Z_1)}{\longrightarrow} \  \pic^{R_1(l')}(Z_1)$}}
\put(162,22){\makebox(0,0){$\downarrow \, $\tiny{$r$}}}
\put(78,22){\makebox(0,0){$\downarrow \, $\tiny{$\fr$}}}
\end{picture}
\end{equation*}

We can consider instead of the `total' Abel map $c^{l'}(Z)$ only its restriction above a fixed fiber of $r$.

That is, we fix some  $\mfl\in \pic^{R_1(l')}(Z_1)$, and we study the restriction of $c^{l'}(Z)$ to $(r\circ c^{l'}(Z))^{-1}(\mfl)\to r^{-1}(\mfl)$.

 The subvariety $(r\circ c^{l'}(Z))^{-1}(\mfl)
=(c^{R_1(l')}(Z_1) \circ \fr)^{-1}(\mfl) \subset \eca^{l'}(Z)$ is denoted by $\eca^{l', \mfl}(Z)$.

\begin{theorem}\cite{R}\label{relativspace}
Fix an arbitrary singularity $\tX$ a Chern class $l'\in -\calS'$, an integer effective cycle $Z\geq E$ and a cycle $Z_1 \leq Z$ and let's have a line bundle  $\mfl\in \pic^{R(l')}(Z_1)$.
Assume that  $\eca^{l', \mfl}(Z)$ is nonempty, then it is smooth of dimension $h^1(Z_1,\mfl)  - h^1(\calO_{Z_1})+ (l', Z)$ and irreducible.
\end{theorem}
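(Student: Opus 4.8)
The plan is to analyze the restricted Abel map $c^{l',\mfl}(Z)\colon \eca^{l',\mfl}(Z)\to r^{-1}(\mfl)$ by combining the known structure theory of the total Abel map $c^{l'}(Z)$ (from Section 3, \cite{NNA1}) with the fibration structure provided by $r$. First I would record that $r^{-1}(\mfl)\subset \pic^{l'}(Z)$ is an affine space: indeed, the short exact sequence $0\to \pic^0(Z)\to \pic(Z)\to L'\to 0$ restricts, via the kernel of $r$ on $\pic^0$ level, to show that $r^{-1}(\mfl)$ is a coset of $\ker(\pic^0(Z)\to\pic^0(Z_1))$, hence a smooth irreducible affine variety of dimension $h^1(\calO_Z)-h^1(\calO_{Z_1})$. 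This handles the "base" of the map. Next I would study the fibers of $c^{l',\mfl}(Z)$ over a point $\calL\in r^{-1}(\mfl)$: these coincide with the fibers of the total Abel map $c^{l'}(Z)$ over $\calL$, which by \eqref{eq:dimfiber} are smooth irreducible quasiprojective of dimension $(l',Z)+h^1(Z,\calL)-h^1(\calO_Z)$ whenever nonempty.

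The heart of the argument is then the following: over the (Zariski open, by semicontinuity) locus of $\calL\in r^{-1}(\mfl)$ where $h^1(Z,\calL)$ is minimal, the map $c^{l',\mfl}(Z)$ is a fibration with smooth irreducible fibers over a smooth irreducible base, so the preimage is smooth irreducible, and one computes its dimension by adding base and fiber dimensions. The key point is to identify this minimal value of $h^1(Z,\calL)$ as $\calL$ ranges over $r^{-1}(\mfl)$: I expect it to equal $h^1(Z_1,\mfl)$, i.e. that a generic line bundle in the fiber $r^{-1}(\mfl)$ "sees no more cohomology than its restriction forces." This should follow from the relative version of the main cohomological results of \cite{R} — essentially that for generic $\calL$ in $r^{-1}(\mfl)$ the natural map $H^1(Z,\calL)\to H^1(Z_1,\mfl)$ is injective, equivalently $H^1$ of the kernel sheaf $\calL\otimes\calO_{Z}(-Z_1)$-type object vanishes generically, which is exactly the kind of "relatively generic vanishing" that \cite{R} establishes. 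Granting this, the generic-fiber dimension is $(l',Z)+h^1(Z_1,\mfl)-h^1(\calO_Z)$, and adding the base dimension $h^1(\calO_Z)-h^1(\calO_{Z_1})$ gives the claimed $h^1(Z_1,\mfl)-h^1(\calO_{Z_1})+(l',Z)$.

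The remaining issue is to upgrade "the generic part is smooth irreducible of the right dimension" to "all of $\eca^{l',\mfl}(Z)$ is smooth irreducible of the right dimension." For irreducibility I would argue that $\eca^{l',\mfl}(Z)$ is dominated by the locus sitting over the generic stratum of the base together with the fact that $\eca^{l',\mfl}(Z)$ is equidimensional — or, more robustly, invoke the analogue of the argument in \cite[\S6]{NNA1} showing the total space $\eca^{l'}(Z)$ is smooth irreducible by exhibiting it as an affine-bundle-like object, and run that argument relatively over $r^{-1}(\mfl)$. Concretely, $\eca^{l',\mfl}(Z)$ can be presented as a subspace of $\eca^{l'}(Z)$ cut out by the (smooth, surjective onto the relevant affine space) composite $r\circ c^{l'}(Z)$ restricted to $\eca^{l'}(Z)$; smoothness of $\eca^{l',\mfl}(Z)$ then follows from smoothness of $\eca^{l'}(Z)$ (already known) plus submersivity of this composite along $\eca^{l'}(Z)$, and the dimension count is forced.

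The main obstacle I anticipate is precisely establishing the relative generic vanishing statement $\min_{\calL\in r^{-1}(\mfl)} h^1(Z,\calL) = h^1(Z_1,\mfl)$ with enough uniformity (i.e. that it holds on a dense open subset of the base, not merely at one point), together with showing the composite $r\circ c^{l'}(Z)$ is submersive onto $r^{-1}(\mfl)$ so that the fiber product picture is genuinely a smooth fibration; this is where one must lean hardest on the machinery of \cite{R}, and in fact Theorem \ref{relativspace} is quite possibly one of the central results proved there rather than a corollary, so the "proof" may largely amount to citing and assembling the relative Abel map results of \cite{R} in the form stated. The genericity/irreducibility bookkeeping — ensuring no lower-dimensional components hide over the non-generic locus of the base — is the technical price to pay, handled by the equidimensionality of fibers of $c^{l'}(Z)$ combined with the structure of the jumps of $h^1(Z,\calL)$ along $r^{-1}(\mfl)$.
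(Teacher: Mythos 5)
First, a caveat: the paper does not actually prove Theorem~\ref{relativspace} — it is quoted from \cite{R} without proof — so there is no in-paper argument to compare against line by line. Judged on its own terms, your proposal has a genuine gap at its central step.

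You fiber $\eca^{l',\mfl}(Z)$ over $r^{-1}(\mfl)\subset\pic^{l'}(Z)$ and rest the dimension count on the identity $\min_{\calL\in r^{-1}(\mfl)}h^1(Z,\calL)=h^1(Z_1,\mfl)$ holding on a dense open subset of $r^{-1}(\mfl)$. By Theorem~\ref{th:hegy2rel} the generic value of $h^1(Z,\calL)$ on $r^{-1}(\mfl)$ is $\chi(-l')-\min_{0\leq l\leq Z}\{\chi(-l'+l)-h^1((Z-l)_1,\mfl(-l))\}$, and this equals $h^1(Z_1,\mfl)$ precisely when $(l',\mfl)$ is relatively dominant (Theorem~\ref{th:dominantrel}(2)). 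Relative dominance is strictly stronger than the hypothesis of the theorem, which is only that $\eca^{l',\mfl}(Z)\neq\emptyset$. In the non-dominant case the image of $c^{l',\mfl}(Z)$ is a proper subvariety of $r^{-1}(\mfl)$, the generic $\calL\in r^{-1}(\mfl)$ has \emph{empty} fiber, and your ``base plus generic fiber'' computation says nothing about the dimension of $\eca^{l',\mfl}(Z)$. The equidimensionality over the non-generic strata, which you invoke to finish, is exactly the nontrivial content of the theorem (it amounts to the statement that the stratum of $\im(c^{l'})\cap r^{-1}(\mfl)$ where $h^1(Z,\calL)=k$ has dimension $h^1(Z_1,\mfl)-h^1(\calO_{Z_1})+h^1(\calO_Z)-k$ for every occurring $k$), so assuming it is circular. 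Your fallback for smoothness also fails: the composite $r\circ c^{l'}(Z)=c^{R(l')}(Z_1)\circ\fr$ has differential with image contained in the image of the tangent map of $c^{R(l')}(Z_1)$, whose rank is $h^1(\calO_{Z_1})-h^1(Z_1,\mfl)$; it is therefore never submersive onto $\pic^{R(l')}(Z_1)$ when $h^1(Z_1,\mfl)>0$, which is the only interesting case.

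The repair is to change the base of the fibration: write $\eca^{l',\mfl}(Z)=\fr^{-1}\bigl(c^{-1}(\mfl)\bigr)$, where $c^{-1}(\mfl)\subset\eca^{R(l')}(Z_1)$ is the Abel-map fiber \emph{on $Z_1$}. That fiber is smooth, irreducible, of dimension $(R(l'),Z_1)+h^1(Z_1,\mfl)-h^1(\calO_{Z_1})$ — not because the Abel map is submersive, but by the explicit description $c^{-1}(\mfl)=H^0(Z_1,\mfl)_{reg}/H^0(\calO_{Z_1}^*)$ together with (\ref{eq:dimfiber}). One then shows that the divisor-level restriction $\fr:\eca^{l'}(Z)\to\eca^{R(l')}(Z_1)$ is a smooth surjection with irreducible fibers of dimension $(l',Z)-(R(l'),Z_1)$ (an affine-bundle-type structure, obtained by the same local construction of $\eca$ as in \cite[\S 3]{NNA1}), and the claimed smoothness, irreducibility and dimension of the preimage follow by adding dimensions. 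This reorganization removes any genericity or dominance hypothesis and is, as far as I can tell, the route taken in \cite{R}.
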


Let's recall from \cite{R} the analouge of the theroems about dominance of Abel maps in the relative setup:

\begin{definition}\cite{R}
Fix an arbitrary singularity $\tX$, a Chern class $l'\in -\calS'$, an integer effective cycle $Z\geq E$, a cycle $Z_1 \leq Z$ and a line bundle $\mfl\in \pic^{R_1(l')}(Z_1)$ as above.
We say that the pair $(l',\mfl ) $ is {\it relative
dominant} on the cycle $Z$, if the closure of $ r^{-1}(\mfl)\cap \im(c^{l'}(Z))$ is $r^{-1}(\mfl)$.
\end{definition}

\begin{theorem}\label{th:dominantrel}\cite{R}
 One has the following facts:

(1) If $(l',\mfl)$ is relative dominant on the cycle $Z$, then $ \eca^{l', \mfl}(Z)$ is
nonempty and $h^1(Z,\calL)= h^1(Z_1,\mfl)$ for any
generic line bundle $\calL\in r^{-1}(\mfl)$.

(2) $(l',\mfl)$ is relative dominant on the cycle $Z$,  if and only if for all
 $0<l\leq Z$, $l\in L$ one has
$$\chi(-l')- h^1(Z_1, \mfl) < \chi(-l'+l)-
 h^1((Z-l)_1, \mfl(-l)).$$, where we denote $(Z-l)_1 = \min(Z-l, Z_1)$.
\end{theorem}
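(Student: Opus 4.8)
The plan is to reduce everything to the properties of the relative Abel map $c^{l'}(Z):\eca^{l'}(Z)\to\pic^{l'}(Z)$ restricted above the fiber $r^{-1}(\mfl)$, and to compute the dimension of the relevant varieties using Theorem \ref{relativspace}. For part (1): assume $(l',\mfl)$ is relative dominant on $Z$. Then by definition the closure of $r^{-1}(\mfl)\cap\im(c^{l'}(Z))$ is all of $r^{-1}(\mfl)$, so in particular this intersection is nonempty, hence $\eca^{l',\mfl}(Z)=(r\circ c^{l'}(Z))^{-1}(\mfl)$ is nonempty. Now the image of the restricted Abel map $c^{l'}(Z):\eca^{l',\mfl}(Z)\to r^{-1}(\mfl)$ is constructible and dense, so it contains a dense open subset $U$ of $r^{-1}(\mfl)$; over a point $\calL\in U$ the fiber $(c^{l'}(Z))^{-1}(\calL)\subset\eca^{l',\mfl}(Z)$ is nonempty. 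By the fiber dimension formula \eqref{eq:dimfiber} the fiber over $\calL\in\im(c^{l'}(Z))$ has dimension $(l',Z)+h^1(Z,\calL)-h^1(\calO_Z)$, while $r^{-1}(\mfl)$ is an affine space of dimension $h^1(\calO_Z)-h^1(\calO_{Z_1})$ (being a torsor over $\ker(\pic^0(Z)\to\pic^0(Z_1))$). Comparing with $\dim\eca^{l',\mfl}(Z)=h^1(Z_1,\mfl)-h^1(\calO_{Z_1})+(l',Z)$ from Theorem \ref{relativspace}, and using that for $\calL$ in a dense open subset of $r^{-1}(\mfl)$ the fiber has the generic (minimal) dimension equal to $\dim\eca^{l',\mfl}(Z)-\dim r^{-1}(\mfl)$, one solves for $h^1(Z,\calL)$ and gets exactly $h^1(Z,\calL)=h^1(Z_1,\mfl)$. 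Since $h^1$ is upper semicontinuous on $r^{-1}(\mfl)$ and $h^1(Z_1,\mfl)$ is its minimal value there (this minimality needs an argument: any $\calL$ restricting to $\mfl$ has $h^1(Z,\calL)\ge h^1(Z_1,\mfl)$ from the cohomology exact sequence of $0\to\cali\calL\to\calL\to\mfl\to0$ together with $h^1$ of the kernel... actually one should instead note that the generic value on an irreducible parameter space is the minimal one), the generic value being $h^1(Z_1,\mfl)$ gives the claim for all generic $\calL\in r^{-1}(\mfl)$.

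For part (2), the strategy is a standard dominance criterion expressed through the tangent map of the Abel map, exactly parallel to the absolute case treated in \cite{NNA1}: $(l',\mfl)$ fails to be relative dominant if and only if the image of $c^{l'}(Z)$ restricted to $\eca^{l',\mfl}(Z)$ has dimension strictly less than $\dim r^{-1}(\mfl)=h^1(\calO_Z)-h^1(\calO_{Z_1})$. One analyzes this by the incidence/semicontinuity filtration: the image has codimension $\ge 1$ in $r^{-1}(\mfl)$ precisely when there is some effective $0<l\le Z$, $l\in L$, such that "pushing a piece of the divisor into the $E_l$ direction" is forced, i.e. such that the expected dimension count for $\eca^{l'-??}$ fails. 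Concretely I would follow \cite{R} (or the blueprint of \cite[Theorem 6.?]{NNA1}): relative dominance holds iff for every $0<l\le Z$ the "relative $\chi$-inequality" holds, where the two sides $\chi(-l')-h^1(Z_1,\mfl)$ and $\chi(-l'+l)-h^1((Z-l)_1,\mfl(-l))$ measure, respectively, the dimension of $\eca^{l',\mfl}(Z)$ minus $(l',Z)$ and the dimension of the locus of divisors in $\eca^{l',\mfl}(Z)$ containing $E_l$ (or vanishing to order $\ge$ the coefficients of $l$), again via Theorem \ref{relativspace} applied on the cycle $Z-l$ and the restricted bundle $\mfl(-l)$ on $(Z-l)_1=\min(Z-l,Z_1)$. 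The key identity making the bookkeeping work is $\dim\eca^{l',\mfl}(Z)-\dim(\text{sublocus for }l)=\big(\chi(-l'+l)-h^1((Z-l)_1,\mfl(-l))\big)-\big(\chi(-l')-h^1(Z_1,\mfl)\big)$, which follows from $\chi(-l'+l)-\chi(-l')=(l',Z)-(l,Z-l/2-\dots)$-type lattice computations together with the two dimension formulas; then the Abel map is dominant iff the generic divisor is not forced into any such sublocus, which is exactly the stated system of strict inequalities (the inequality must be strict because equality would mean the sublocus is a full-dimensional component, i.e. the generic divisor does lie in it).

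The main obstacle I expect is part (2), specifically the precise translation "image of the restricted Abel map has positive codimension $\iff$ some lattice inequality fails." In the absolute case this is the heart of the Abel-map theory of \cite{NNA1} and it requires controlling the tangent map $T_D\eca^{l'}(Z)\to T_{c(D)}\pic^{l'}(Z)$ and identifying its cokernel with a space of differential forms via Laufer duality (Theorem \ref{th:DUALVO}); in the relative setting one must do this fiberwise over $r^{-1}(\mfl)$, i.e. work with the relative cotangent complex / relative Laufer duality, and check that the kernel of $R_1$ on the form side pairs correctly with $\ker(\pic^0(Z)\to\pic^0(Z_1))$. Granting the results of \cite{R} quoted above (Theorem \ref{relativspace} and the relative-dominance definition), the remaining work is the dimension count, which is the routine part; the conceptual content is packaged in those cited theorems, so the proof here is mostly assembling \eqref{eq:dimfiber}, Theorem \ref{relativspace}, upper semicontinuity of $h^1$, and a Riemann–Roch computation for $\chi(-l'+l)-\chi(-l')$.
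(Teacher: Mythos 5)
A preliminary remark: the paper does not actually prove Theorem \ref{th:dominantrel}; it is quoted from \cite{R} as background material, so there is no in-paper proof to compare yours against. Judged on its own terms, your argument for part (1) is correct and is the standard one: nonemptiness is immediate from the definition of relative dominance, and $h^1(Z,\calL)=h^1(Z_1,\mfl)$ for generic $\calL\in r^{-1}(\mfl)$ follows by combining $\dim\eca^{l',\mfl}(Z)=h^1(Z_1,\mfl)-h^1(\calO_{Z_1})+(l',Z)$ from Theorem \ref{relativspace}, the fibre formula (\ref{eq:dimfiber}), the fact that $r^{-1}(\mfl)$ is an affine space of dimension $h^1(\calO_Z)-h^1(\calO_{Z_1})$ (the restriction $H^1(\calO_Z)\to H^1(\calO_{Z_1})$ being onto), and the generic-fibre dimension theorem for a dominant map from the irreducible source $\eca^{l',\mfl}(Z)$. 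The digression about semicontinuity and minimality is unnecessary: the claim concerns only generic $\calL$, and the dimension count already delivers it.

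Part (2) is where the substance of the theorem lies, and there you have a genuine gap rather than a proof. You assert that failure of relative dominance is equivalent to the degeneration of the stated inequality for some $0<l\le Z$, and you describe the two sides as dimensions of $\eca^{l',\mfl}(Z)$ and of a sublocus of divisors forced to contain $l$, but neither implication is actually established. For the easier direction (a violated inequality forces non-dominance) one must run the cohomology sequence of $0\to\calL(-l)|_{Z-l}\to\calL\to\calL|_{l}\to 0$ to obtain $h^1(Z,\calL)\geq \chi(-l'+l)-\chi(-l')+h^1((Z-l)_1,\mfl(-l))>h^1(Z_1,\mfl)$ for \emph{every} $\calL\in r^{-1}(\mfl)$ and then contradict part (1); you do not carry this out. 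For the harder direction (all inequalities strict implies dominance) your sketch reduces to ``the generic divisor is not forced into any sublocus,'' but you never show that these subloci indexed by $l$ exhaust the possible failure of dominance; that is precisely the inductive construction of sections with prescribed vanishing (equivalently, the analysis of the tangent map of $c^{l'}$ along $r^{-1}(\mfl)$ via relative Laufer duality) which constitutes the actual proof in \cite{R}, and you explicitly defer it. Since Theorem \ref{th:dominantrel} is itself one of the results of \cite{R}, ``granting the results of \cite{R} quoted above'' cannot be stretched to cover the missing step without becoming circular. As written, part (1) is a proof and part (2) is a plausible plan.
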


\begin{theorem}\label{th:hegy2rel}\cite{R}
Fix an arbitrary singularity $\tX$, a Chern class $l'\in -\calS'$, an integer effective cycle $Z\geq E$, a cycle $Z_1 \leq Z$ and a line bundle $\mfl\in \pic^{R_1(l')}(Z_1)$ as in Theorem \ref{th:dominantrel}. 
Then for any $\calL\in r^{-1}(\mfl)$ one has
\begin{equation*}\label{eq:genericLrel}
\begin{array}{ll}h^1(Z,\calL)\geq \chi(-l')-
\min_{0\leq l\leq Z,\ l\in L} \{\,\chi(-l'+l) -
h^1((Z-l)_1, \mfl(-l))\, \}, \ \ \mbox{or equivalently,}\\
h^0(Z,\calL)\geq \max_{0\leq l\leq Z,\, l\in L}
\{\,\chi(Z-l,\calL(-l))+  h^1((Z-l)_1, \mfl(-l))\,\}.\end{array}\end{equation*}
Furthermore, if $\calL$ is generic in $r^{-1}(\mfl)$
then in both inequalities we have equalities.
\end{theorem}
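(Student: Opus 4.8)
\textbf{Proof proposal for Theorem \ref{th:hegy2rel}.}

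The plan is to derive both displayed inequalities from the relative dominance criterion of Theorem \ref{th:dominantrel}(2) applied not to the pair $(l',\mfl)$ on $Z$ itself, but to a carefully chosen auxiliary pair on a smaller cycle. First I would fix $\calL \in r^{-1}(\mfl)$ and look for the cycle $0 \leq l \leq Z$, $l \in L$, achieving the minimum on the right-hand side; call it $l_0$, and set $Z^\circ := Z - l_0$, with $\mfl^\circ := \mfl(-l_0)$ on $Z^\circ_1 = \min(Z-l_0, Z_1)$ and $l'^\circ := l' - l_0$ as Chern class. The point of choosing $l_0$ to be a minimizer is exactly that the inequality of Theorem \ref{th:dominantrel}(2) then holds for the pair $(l'^\circ, \mfl^\circ)$ on $Z^\circ$: for any $0 < l \leq Z^\circ$ we have $l_0 + l \leq Z$, and minimality of $l_0$ over all cycles $\leq Z$ forces the strict inequality $\chi(-l'^\circ) - h^1(Z^\circ_1, \mfl^\circ) < \chi(-l'^\circ + l) - h^1((Z^\circ - l)_1, \mfl^\circ(-l))$ after rewriting $-l'^\circ + l = -l' + (l_0 + l)$ and $(Z^\circ - l)_1 = (Z - (l_0+l))_1$. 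Hence $(l'^\circ, \mfl^\circ)$ is relative dominant on $Z^\circ$, so by Theorem \ref{th:dominantrel}(1) the space $\eca^{l'^\circ, \mfl^\circ}(Z^\circ)$ is nonempty and a generic $\calL^\circ \in r^{-1}(\mfl^\circ)$ over $Z^\circ$ has $h^1(Z^\circ, \calL^\circ) = h^1(Z^\circ_1, \mfl^\circ)$.

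The second step is to transport this back to $Z$ and $\calL$ via the twist-by-$\calO_Z(-l_0)$ trick and the standard exact sequence relating cohomology on $Z$ and on $Z - l_0$. Concretely, for any line bundle on $Z$ one has the exact sequence $0 \to \calO_{l_0}(-(Z-l_0)) \otimes (\text{twist}) \to \calO_Z \otimes \calL \to \calO_{Z-l_0} \otimes \calL \to 0$ (more precisely I would use that $h^0(Z,\calL) \geq h^0(Z-l_0, \calL(-l_0)) + (\text{contribution}) $ expressed through $\chi$), which yields $h^0(Z, \calL) \geq \chi(Z - l_0, \calL(-l_0)) + h^1(Z - l_0, \calL(-l_0))$ and then, using the semicontinuity that a generic member of $r^{-1}(\mfl^\circ)$ maximizes $h^0$ on $Z^\circ$ together with the relative dominance conclusion, $h^0(Z,\calL) \geq \chi(Z-l_0, \calL(-l_0)) + h^1((Z-l_0)_1, \mfl(-l_0))$. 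Taking the maximum over $l_0$ gives the $h^0$-form; converting via $h^1(Z,\calL) = h^0(Z,\calL) - \chi(Z,\calL)$ and $\chi(Z,\calL) = \chi(-l')$ (degree reasons) and the identity $\chi(Z,\calL) - \chi(Z-l,\calL(-l)) = \chi(-l') - \chi(-l'+l)$ gives the $h^1$-form. For the equality statement when $\calL$ is generic in $r^{-1}(\mfl)$, I would argue that genericity of $\calL$ forces, for the optimal $l_0$, the restriction $\calL(-l_0)$ to be generic in $r^{-1}(\mfl^\circ)$ on $Z^\circ$ (the restriction map between the relevant affine fibers is dominant, by Theorem \ref{relativspace} on dimensions), so the inequality above is an equality; and no larger value can occur because the reverse inequality $h^1(Z,\calL) \leq \chi(-l') - \chi(-l'+l) + h^1((Z-l)_1,\mfl(-l))$ holds for every $l$ by the same exact sequence run the other way, for every $\calL \in r^{-1}(\mfl)$.

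The main obstacle I anticipate is the bookkeeping around the non-connected subgraph $\calv_1$ and the cycle $(Z-l)_1 = \min(Z-l, Z_1)$: one must check that twisting by $\calO_Z(-l_0)$ is compatible with the restriction $r$ to $Z_1$ in the sense that $r(\calL(-l_0)) = \mfl(-l_0)$ lands in $\pic^{R_1(l'-l_0)}(Z_1^\circ)$ correctly, and that the quantity $h^1((Z-l)_1, \mfl(-l))$ appearing in Theorem \ref{th:dominantrel}(2) matches $h^1(Z^\circ_1, \mfl^\circ(-l))$ after the substitution. This is where the restricted-natural-line-bundle subtleties of \S\ref{bek:restrnlb} could bite, since restriction does not commute with the natural section; but because here we twist by an honest integral cycle $l_0 \in L$ supported on $E$ rather than by a general Chern class, the restriction $\calO_Z(l_0)|_{Z_1} = \calO_{Z_1}(l_0)$ is genuinely natural, and I expect this to resolve cleanly. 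A secondary point requiring care is the semicontinuity input — that $h^0$ attains its minimum (equivalently $h^1$ its minimum) on a Zariski-open subset of the affine fiber $r^{-1}(\mfl^\circ)$ — which should follow from Theorem \ref{relativspace} together with the fiber-dimension formula \eqref{eq:dimfiber} applied in the relative setting, exactly as in the absolute case treated in \cite{NNA1}.
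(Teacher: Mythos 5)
A preliminary remark: the paper itself contains no proof of Theorem \ref{th:hegy2rel} — it is stated with the citation \cite{R} in a section that explicitly only summarises results from that reference — so there is no in-paper argument to compare yours against; what follows is an assessment of your proposal on its own terms.

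Your skeleton for the inequality is fine ($h^0(Z,\calL)\geq h^0(Z-l,\calL(-l))=\chi(Z-l,\calL(-l))+h^1(Z-l,\calL(-l))\geq\chi(Z-l,\calL(-l))+h^1((Z-l)_1,\mfl(-l))$, using that $H^1(Z-l,\calL(-l))\to H^1((Z-l)_1,\mfl(-l))$ is onto), but there are two real problems with the equality part. The first is repairable: taking $l_0$ to be \emph{any} minimizer only yields the non-strict inequality $\chi(-l'+l_0)-h^1((Z-l_0)_1,\mfl(-l_0))\leq\chi(-l'+l_0+l)-h^1((Z-l_0-l)_1,\mfl(-l_0-l))$ for $0<l\leq Z-l_0$, whereas Theorem \ref{th:dominantrel}(2) requires strict inequality; you must take $l_0$ maximal with respect to $\leq$ among the minimizers, so that $l_0+l$ is never itself a minimizer. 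The second problem is fatal as written: the ``reverse inequality'' $h^1(Z,\calL)\leq\chi(-l')-\chi(-l'+l)+h^1((Z-l)_1,\mfl(-l))$ for every $l$ and every $\calL\in r^{-1}(\mfl)$ is false. Specialize to $Z_1=0$ and $l=0$: it would assert $h^1(Z,\calL)\leq 0$ for every line bundle. More structurally, if both your inequality and this reverse inequality held for all $\calL$, the theorem would give equality for \emph{every} $\calL\in r^{-1}(\mfl)$, contradicting the very shape of the statement; the universal inequality in fact goes the other way, $h^1(Z,\calL)\geq h^1(Z_1,\mfl)$. So the upper bound $h^0(Z,\calL)\leq\max_l\{\cdots\}$ for generic $\calL$ is not established by your argument.

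The missing ingredient is the fixed-component stratification. For generic $\calL$ let $A$ be the fixed-component cycle, so that $h^0(Z,\calL)=h^0(Z-A,\calL(-A))=\chi(Z-A,\calL(-A))+h^1(Z-A,\calL(-A))$ with $\calL(-A)\in\im(c^{l'-A}(Z-A))\cap r^{-1}(\mfl(-A))$; one then shows $h^1(Z-A,\calL(-A))=h^1((Z-A)_1,\mfl(-A))$ by comparing $\dim\eca^{l'-A,\mfl(-A)}(Z-A)$ from Theorem \ref{relativspace} with the fiber dimension (\ref{eq:dimfiber}), which forces the locus where this $h^1$ is larger to have positive codimension in $r^{-1}(\mfl)$. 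That bounds $h^0(Z,\calL)$ above by the maximum and closes the argument. Finally, your transport step (generic $\calL\in r^{-1}(\mfl)$ has $\calL(-l_0)|_{Z-l_0}$ generic in $r^{-1}(\mfl(-l_0))$) is true, but not for the reason you give: Theorem \ref{relativspace} says nothing about the map between Picard fibers. The correct justification is surjectivity of $\ker\bigl(H^1(\calO_Z)\to H^1(\calO_{Z_1})\bigr)\to\ker\bigl(H^1(\calO_{Z-l_0})\to H^1(\calO_{(Z-l_0)_1})\bigr)$, which one checks by Laufer duality: a differential form whose pole is bounded by both $Z-l_0$ and $Z_1$ has pole bounded by $\min(Z-l_0,Z_1)=(Z-l_0)_1$, giving injectivity of the dual map.
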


\subsection{Relatively generic analytic structures}

In the following we recall the results from \cite{R} about relatively generic analytic structures:

Let's fix a a topological type, in other words a resolution graph $\mathcal{T}$ with vertex set $\calv$,
we consider a partition $\calv = \calv_1 \cup  \calv_2$.

They define two (not necessarily connected) subgraphs $\mathcal{T}_1$ and $\mathcal{T}_2$.

We call the intersection of an exceptional divisor from
$ \calv_1 $ with an exceptional divisor from  $ \calv_2 $ a
{\it contact point}.

 For any $Z\in L=L(\mathcal{T})$ we write $Z=Z_1+Z_2$,
where $Z_i\in L(\mathcal{T}_i)$ is
supported in $\mathcal{T}_i$ ($i=1,2$).
Furthermore, parallel to the restrictions
$r_i : \pic(Z)\to \pic(Z_i)$ one also has the (cohomological) restriction operators
  $R_i : L'(\mathcal{T}) \to L_i':=L'(\mathcal{T}_i)$
(defined as $R_i(E^*_v(\mathcal{T}))=E^*_v(\mathcal{T}_i)$ if $v\in \calv_i$, and
$R_i(E^*_v(\mathcal{T}))=0$ otherwise).

For any $l'\in L'(\mathcal{T})$ and any $\calL\in \pic^{l'}(Z)$ it satisfies $c_1(r_i(\calL))=R_i(c_1(\calL))$.

In the following for the sake of simplicity we will denote $r = r_1$ and $R = R_1$.

Furthermore let's have a fixed analytic type $\tX_1$ for the subgraph $\mathcal{T}_1$ (if $\mathcal{T}_1$ is disconnected, then an analytic type for each of its connected components).

Also for each vertex $v_2 \in \calv_2$ which has got a neighbour $v_1$ in $\calv_1$ we fix a cut $D_{v_2}$ on $\tX_1$, along we glue the exceptional divisor $E_{v_2}$.
This means that $D_{v_2}$ is a divisor, which intersects the exceptional divisor $E_{v_1}$ transversally in one point and we will glue the tubular neighborhood of the exceptional divisor $E_{v_2}$ in a way, such that $E_{v_2} \cap \tX_1 = D_{v_2}$.

If for some vertex $v_2 \in \calv_2$, which has got a neighbour in $\calv_1$ we don't say explicitely what is the fixed cut, then it should be understood in the way that we glue the exceptional divisor $E_{v_2}$ along a generic cut.

Let's glue the tubular neihgbourhoods of the exceptional divisors $E_{v_2}, v_2 \in \calv_2$  with the above conditions generically to the fixed resolution $\tX_1$.
We get a singularity $\tX$ with resolution graph $\mathcal{T}$ and we say that $\tX$ is a relatively generic singularity corresponding to the analytical structure $\tX_1$ and the cuts $D_{v_2}$. 

For the more precise explanation of relative genericity look at \cite{R}.

We have the following theorems with this setup from \cite{R}:

\begin{theorem}\cite{R}\label{relgen1}
Let's have the setup as above and let's have furthermore an effective cycle $Z$ on $\tX$ and let's have $Z = Z_1 + Z_2$, where $|Z_1| \subset \calv_1$ and $|Z_2| \subset \calv_2$.

Let's have the natural line bundle $\calL= \calO_{\tX}(l')$ on $\tX$, such that $ l' = - \sum_{v \in \calv} a_v E_v \in L'_{\mathcal{T}}$, with $a_v > 0, v \in \calv_2 \cap |Z|$,  and let's denote $c_1 (\calL | Z) = l'_ Z \in L'_{|Z|}$, furthermore let's denote $\mfl = \calL | Z_1$, then we have the following:

We have $H^0(Z,\calL)_{reg} \not=\emptyset$ if and only if $(l',\mfl)$ is relative dominant on the cycle $Z$ or equivalently:

\begin{equation*}
\chi(-l')- h^1(Z_1, \mfl) < \chi(-l'+l)-  h^1((Z-l)_1, \mfl(-l)),
\end{equation*}
for all $0 < l \leq Z$.
\end{theorem}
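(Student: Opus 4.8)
The plan is to reduce the statement to the dominance criterion of Theorem \ref{th:dominantrel} by showing that, for a relatively generic gluing, the natural line bundle $\calL=\calO_{\tX}(l')$ restricted to $Z$ behaves, as far as the Abel map is concerned, exactly like a \emph{generic} line bundle in the fiber $r^{-1}(\mfl)$ over its own restriction $\mfl=\calL|Z_1$. More precisely, first I would observe that $H^0(Z,\calL)_{reg}\neq\emptyset$ is equivalent to $l'_Z\in\calS'_{an}(Z)$, i.e.\ $\calO_{\tX}(-l')$ having no fixed components on $Z$; this is the analytic-semigroup reformulation from \S\ref{bek:ansemgr} and (\ref{eq:H_0}). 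The point is then that membership of $\calL|Z$ in the image of $c^{l'_Z}(Z)$ depends only on the position of $\calL|Z$ relative to the fibration $r\colon\pic^{l'_Z}(Z)\to\pic^{R_1(l'_Z)}(Z_1)$, and the hypothesis $a_v>0$ for $v\in\calv_2\cap|Z|$ is exactly what forces $\calO_{\tX}(-l')$ to have no fixed components \emph{along} $E_{v_2}$ directions coming from $\calv_2$ (this is the standard reason natural line bundles in the strictly negative quadrant are ``as good as generic'' in those directions, cf.\ the remark after Theorem~\textbf{B} and \cite{NNA2}).

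The key step is to prove that, under relatively generic gluing, $\calL|Z$ is a generic element of its fiber $r^{-1}(\mfl)$ in the sense relevant for the Abel map, so that Theorem \ref{th:dominantrel}(1)--(2) and Theorem \ref{th:hegy2rel} apply verbatim. Concretely I would argue: the gluing parameters (the positions of the cuts $D_{v_2}$ and the identifications of the tubular neighbourhoods of $E_{v_2}$, $v_2\in\calv_2$, onto $\tX_1$) vary in an irreducible parameter space $\calB$; over $\calB$ we get a family of singularities $\tX$ with fixed $\tX_1$, and correspondingly a family of line bundles $\calO_{\tX}(l')|Z$ lying in the fibers $r^{-1}(\mfl')$ for $\mfl'=\calO_{\tX_1}(R_1(l'))$ — and crucially $\mfl'$ is \emph{fixed} (equal to $\mfl$) since $\tX_1$ is fixed. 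So the whole family lives inside the single fiber $r^{-1}(\mfl)$. I would then show that as the gluing parameters vary generically, $\calO_{\tX}(l')|Z$ sweeps out a Zariski-dense subset of $r^{-1}(\mfl)$; this is where the genericity of the cuts $D_{v_2}$ enters, using that moving a contact point moves the restricted line bundle transversally to the locus where extra sections appear. Granting this, Theorem \ref{th:dominantrel}(1) gives that $H^0(Z,\calL)_{reg}\neq\emptyset$ iff $\mfl$ is hit by $c^{l'}(Z)$ generically in its fiber, i.e.\ iff $(l',\mfl)$ is relative dominant on $Z$, and part (2) of that theorem rewrites this as the displayed inequality
$$\chi(-l')-h^1(Z_1,\mfl)<\chi(-l'+l)-h^1((Z-l)_1,\mfl(-l))\quad\text{for all }0<l\leq Z.$$

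The main obstacle I anticipate is precisely the density statement: showing that the relatively generic natural line bundle $\calO_{\tX}(l')|Z$ is generic \emph{within} the fiber $r^{-1}(\mfl)$, not merely generic in $\pic^{l'}(Z)$. One has to control two sources of variation simultaneously — the analytic structure on the $\calv_2$-part and the gluing along the contact points — and check that the induced variation of $\calO_{\tX}(l')|Z$ stays in the fiber over the fixed $\mfl$ yet still fills it up. This requires the machinery of \cite{R} on relative Abel maps and on the semicontinuity of $h^1$ in relatively generic families; in particular one must invoke that $\eca^{l',\mfl}(Z)$ is irreducible (Theorem \ref{relativspace}) so that ``generic in the fiber'' is unambiguous, and then match the relatively-generic analytic structure against that generic point. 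A secondary, more bookkeeping-type difficulty is handling vertices $v_2\in\calv_2$ that are \emph{not} in $|Z|$ or have no neighbour in $\calv_1$, and checking the equivalence of the two forms of the condition (dominance versus the chain of inequalities) in the presence of the truncation $(Z-l)_1=\min(Z-l,Z_1)$; these are routine given Theorem \ref{th:dominantrel} but need to be spelled out.
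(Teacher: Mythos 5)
You should first note that this paper does not actually prove Theorem \ref{relgen1}: it is quoted verbatim from \cite{R} as background material, so there is no internal proof to compare your argument against. Judging your proposal on its own merits, the outer frame is right — the theorem should reduce to Theorem \ref{th:dominantrel}, whose part (2) already supplies the equivalence between relative dominance and the displayed inequalities, so the genuine content is only the equivalence $H^0(Z,\calL)_{reg}\neq\emptyset \Leftrightarrow (l',\mfl)$ relatively dominant.

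The gap is in your key step. You propose to show that, as the gluing parameters vary, $\calO_{\tX}(l')|Z$ sweeps out a Zariski-dense subset of the single fiber $r^{-1}(\mfl)$, and then to quote Theorem \ref{th:dominantrel}(1). But this is not well-posed: $Z=Z_1+Z_2$ with $Z_2$ supported on $\calv_2$, so changing the cuts $D_{v_2}$ and the gluing of the tubular neighbourhoods changes the analytic structure of $Z$ itself, hence the ambient group $\pic^{l'}(Z)$ and the fiber $r^{-1}(\mfl)$ change with the parameter; only $Z_1$ and $\mfl$ stay fixed. There is no single fiber for the family to be dense in, and no a priori comparison between the number of effective gluing parameters and $\dim r^{-1}(\mfl)=h^1(\calO_Z)-h^1(\calO_{Z_1})$. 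Moreover, even the weaker statement you would need — that for one fixed relatively generic gluing the bundle $\calO_{\tX}(l')|Z$ behaves like a generic element of its fiber — is essentially the full content of Theorems \ref{relgen1} and \ref{relgen2}, so assuming it is circular. The argument in \cite{R} instead compares \emph{invariants} rather than line bundles: the hypothesis $a_v>0$ for $v\in\calv_2\cap|Z|$ lets one realize $\calO_Z(l')$ (up to the fixed data on $Z_1$) by effective divisors built from the generic cuts, i.e.\ by a generic point of the relative Abel space $\eca^{l',\mfl}(Z)$ of Theorem \ref{relativspace}, and then the dimension count $\dim\eca^{l',\mfl}(Z)=h^1(Z_1,\mfl)-h^1(\calO_{Z_1})+(l',Z)$ versus the fiber dimension of $c^{l'}$ in (\ref{eq:dimfiber}) is what decides whether sections without fixed components exist. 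Your proposal would need to be restructured along these lines; as written, the central density claim cannot be carried out.
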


\begin{theorem}\cite{R}\label{relgen2}
Let's have the same setup as in the previous theorem, then we have:

\begin{equation*}
h^1(Z, \calL) =  h^1(Z, \calL_{gen}),                                   
\end{equation*}
where $\calL_{gen}$ is a generic line bundle in $ r^{-1}(\mfl) \subset \pic^{l'_m}(Z)$, or equivalently:

\begin{equation*}
h^1(Z, \calL)= \chi(-l') - \min_{0 \leq l \leq Z}(\chi(-l'+l)-  h^1((Z-l)_1, \mfl(-l))).
\end{equation*}
\end{theorem}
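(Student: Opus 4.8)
The plan is to establish the single equality $h^1(Z,\calL)=h^1(Z,\calL_{gen})$, since the displayed combinatorial formula for the right-hand side is exactly the generic value computed in Theorem~\ref{th:hegy2rel}. First I would dispose of the easy inequality. By construction $\mfl=\calL|_{Z_1}$, so the natural line bundle $\calL=\calO_{\tX}(l')|_Z$ lies in the fiber $r^{-1}(\mfl)\subset\pic^{l'_Z}(Z)$, which is an irreducible affine space (a torsor over the relative $\pic^0$). Since $h^1(Z,-)$ is upper semicontinuous along this family, its generic value $h^1(Z,\calL_{gen})$ is the minimum attained on $r^{-1}(\mfl)$, whence $h^1(Z,\calL)\ge h^1(Z,\calL_{gen})$. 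All the real content is the reverse inequality: \emph{on a relatively generic $\tX$ the canonically defined natural line bundle is cohomologically as generic as possible inside its own fiber.}

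To organize the hard direction I would first peel off the fixed part. Let $0\le l_1\le Z$ be the base cycle of $\calL$, so that $h^0(Z,\calL)=h^0(Z-l_1,\calL(-l_1))$ and $\calL(-l_1)$ has no fixed components. Because the section $l'\mapsto\calO_{\tX}(l')$ is a group homomorphism, $\calL(-l_1)=\calO_{\tX}(l'-l_1)|_{Z-l_1}$ is again a natural line bundle, and the sign hypothesis $a_v>0$ for $v\in\calv_2\cap|Z|$ is inherited by $l'-l_1$; thus Theorem~\ref{relgen1} applies and identifies ``no fixed components'' with relative dominance of $(l'-l_1,\mfl(-l_1))$ on $Z-l_1$. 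The task then reduces to the dominant case, where one wants the exact equality $h^1=h^1((Z-l_1)_1,\mfl(-l_1))$ for the natural line bundle; combined with $h^0(Z,\calL)=h^0(Z-l_1,\calL(-l_1))$ and Riemann--Roch, this would identify $l_1$ as the minimizer in Theorem~\ref{th:hegy2rel} and produce the claimed value.

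The mechanism that forces genericity is Laufer's deformation theory from Section~\ref{ss:LauferDef}. The relatively generic $\tX$ is obtained by gluing the neighbourhoods of the $E_{v_2}$, $v_2\in\calv_2$, along the generic cuts $D_{v_2}$ onto the fixed $\tX_1$; this realizes $\tX$ as the generic fiber of a complete deformation of $Z$ that fixes $Z_1$, whose Kodaira--Spencer map is surjective by Theorem~\ref{th:La2}. Over the base $Q$ one forms the relative Picard family together with the section $\sigma$ sending a parameter $q$ to the class of $\calO_{\tX_q}(l')|_{Z_q}$ inside the relative fiber over $\mfl$. The dominant case above amounts to showing that $\sigma$ meets the generic (non-jumping) cohomological stratum of this fiber for generic $q$, equivalently that the cohomological jump loci pull back along $\sigma$ to proper subvarieties of $Q$. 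I would prove this by differentiating $\sigma$ and showing, using the genericity of the cuts $D_{v_2}$, that the variation of the natural line bundle surjects onto the tangent space $V_Z(I)$ of the relevant Abel-map image, matching the dimension count of Theorem~\ref{relativspace} through the Laufer duality $V_Z(I)^*=\Omega_Z(I)$ of Theorem~\ref{th:DUALVO}.

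The main obstacle is precisely this transversality statement. In the classical picture one varies a line bundle freely inside a \emph{fixed} $\pic^0(Z)$, but here both the ambient Picard variety and the canonical point $\calO_{\tX}(l')|_Z$ move with the analytic structure, so genericity of the gluing does not translate transparently into genericity of the natural line bundle. The delicate part is therefore to set up the relative Picard family over the Laufer deformation base correctly and to control the derivative of the natural-line-bundle section $\sigma$ against the jump loci---showing it is not trapped in a proper stratum---by matching the analytic variation governed by the cuts $D_{v_2}$ with the combinatorial minimum of Theorem~\ref{th:hegy2rel} via the duality of Theorem~\ref{th:DUALVO}.
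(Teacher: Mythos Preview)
This theorem is not proved in the present paper at all: it is quoted verbatim from \cite{R} (see the citation attached to the theorem header), and the paper uses it as a black box in Sections~8 and~9. There is therefore no ``paper's own proof'' to compare your proposal against.

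That said, your outline is a plausible strategy and is in spirit what \cite{R} does, but you yourself flag the real gap. The easy inequality via semicontinuity is fine, and the reduction to the dominant case by stripping the base locus (using Theorem~\ref{relgen1}) is the correct maneuver. The hard step---showing that for a relatively generic gluing the natural line bundle $\calO_{\tX}(l')|_Z$ lands in the generic cohomological stratum of $r^{-1}(\mfl)$---is where everything lives, and your final two paragraphs are a description of the difficulty rather than a resolution of it. In particular, ``differentiate the section $\sigma$ and show it surjects onto $V_Z(I)$'' is exactly the content of the theorem, not an argument for it: one has to actually compute the Kodaira--Spencer image of the gluing deformation inside $H^1(\calO_Z)$ and identify it with the subspace $V_Z(I)$ controlling the Abel map, which is the substantive analytic work carried out in \cite{R}. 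As written, your proposal is an accurate roadmap with the key lemma left unproved.
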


\begin{remark}

In the theorems above in any formula one can replace $l'$ with $l'_Z$, since for every $0 \leq l \leq Z$ one has $\chi(-l')- \chi(-l'+l) = \chi(-l'_Z)- \chi(-l'_Z+l) = -(l', l) - \chi(l)$.
\end{remark}

\section{Reduction lemma of $h^1$ by $1$.}

We prove the following key lemma which will be useful in the following proofs:

\begin{lemma}\label{minus}
Let's have a rational homology sphere resolution graph $\mathcal{T}$, a corresponding singularity with resolution $\tX$ and an effective cycle $Z$ on it, for which we have $H^0(\calO_Z(K + Z))_{reg} \neq \emptyset$, or equivalently $h^1(\calO_Z) > h^1(\calO_{Z'})$ for every cycle $0 \leq Z' < Z$.

Let's have furthermore a line bundle $\calL$ on the cycle $Z$, such that $H^0(Z, \calL)_{reg} \neq \emptyset$ and $h^1(Z', \calL) < h^1(Z, \calL)$ for every cyle $0 \leq Z' < Z$.

Assume that we have a vertex $u \in |Z|$, such that $Z_u \geq 2$ and let's blow up the exceptional divisor $E_u$ sequentially along generic points $Z_v - 1$ times and let's denote the last $-1$-curve we get by $E_{u'}$. 

Let's denote the new vertex set by $\calv_{new}$, and the new resolution by $\tX_{new}$.

Let's denote furthermore $Z_{new} = \pi^*(Z) - \sum_{v \in \calv_{new} \setminus \calv} d(v) E_v$, where $d(v)$ is the distance of $v$ from the vertex set $\calv$, where we have $h^1(\calO_{Z_{new}}) = h^1(\calO_Z)$. 

Let's denote furthemore  $Z_r = Z_{new} - E_{u'}$, with these notations we have $h^1(Z_r, \pi^*(\calL)) = h^1(Z, \calL) - 1$.
\end{lemma}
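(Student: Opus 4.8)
The plan is to track what happens to $h^1$ under the described sequence of blow-ups and the passage from $Z_{new}$ to $Z_r = Z_{new} - E_{u'}$. First I would set up the comparison between line bundles on $\tX$ and on $\tX_{new}$: since the blow-ups are at generic points along $E_u$ and successive exceptional curves, the pullback $\pi^*$ identifies $H^0$ and $H^1$ of $\calL$ with those of $\pi^*(\calL)$ on the full pullback cycle, and more importantly the choice of $Z_{new}$ is exactly calibrated (by subtracting $d(v)E_v$) so that $h^1(\calO_{Z_{new}}) = h^1(\calO_Z)$ and, by the same reasoning applied to $\calL$, so that $h^1(Z_{new}, \pi^*(\calL)) = h^1(Z, \calL)$. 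I would record that both hypotheses — $H^0(\calO_{Z_{new}}(K_{new}+Z_{new}))_{reg}\ne\emptyset$ and $h^1(Z'_{new},\pi^*(\calL)) < h^1(Z_{new},\pi^*(\calL))$ for $0\le Z'_{new}<Z_{new}$ — survive the blow-up, because genericity of the blown-up points prevents any drop in $h^1$ being "used up" prematurely; this is the content I expect to cite from the Abel-map machinery of \cite{NNA1} together with the formal neighbourhood behaviour.

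The heart of the argument is then the single step $Z_{new} \rightsquigarrow Z_r = Z_{new} - E_{u'}$, where $E_{u'}$ is the terminal $(-1)$-curve. Here I would use the exact sequence of sheaves
\begin{equation*}
0 \to \calL_{Z_r}(-E_{u'})\big|_{E_{u'}} \to \pi^*(\calL)\big|_{Z_{new}} \to \pi^*(\calL)\big|_{E_{u'}} \to 0
\end{equation*}
(more precisely the standard restriction sequence relating $\calO_{Z_{new}}$-modules to $\calO_{Z_r}$- and $\calO_{E_{u'}}$-modules), and compute the cohomology of the edge terms. Since $E_{u'}$ is a rational $(-1)$-curve met transversally once by the rest of $Z_{new}$, the relevant restriction $\pi^*(\calL)(-Z_r)|_{E_{u'}}$ has degree that I must identify: $(\pi^*(l'), E_{u'}) = 0$ because $\pi^*$ preserves intersection with curves not in the exceptional locus over which we blow up — wait, more carefully, $E_{u'}$ is a new exceptional curve, so $(\pi^*(l'), E_{u'}) = 0$, and $(Z_{new} - E_{u'}, E_{u'}) = (Z_{new}, E_{u'}) + 1$. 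The key computation is that $(Z_{new}, E_{u'}) = 1$ by construction (the coefficient drop and the single transversal contact), so the twisting line bundle on $\P^1 = E_{u'}$ has degree $-2$, contributing $h^1 = 1$. Then the long exact sequence gives $h^1(Z_{new},\pi^*\calL) \le h^1(Z_r,\pi^*\calL) + 1$ and, using $H^0(Z_{new},\pi^*\calL)_{reg}\ne\emptyset$ to control the connecting map and show the $H^0$ on $E_{u'}$ is "filled", the reverse inequality, yielding $h^1(Z_r,\pi^*\calL) = h^1(Z_{new},\pi^*\calL) - 1 = h^1(Z,\calL) - 1$.

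The main obstacle, I expect, will be establishing that the relevant connecting homomorphism $H^0(\pi^*\calL|_{E_{u'}}) \to H^1(\calL_{Z_r}(-E_{u'})|_{E_{u'}})$ behaves as needed — equivalently, that removing $E_{u'}$ genuinely drops $h^1$ by exactly one rather than leaving it unchanged. This is where the hypothesis $h^1(Z', \calL) < h^1(Z,\calL)$ for all $0 \le Z' < Z$ (transported to $Z_{new}$) is essential: it guarantees that $\pi^*(\calL)$ is, in the language of Theorem \ref{th:dominantrel} and the relative Abel map picture, in the "dominant" regime at every intermediate cycle, so that the $h^1$ is strictly decreasing step by step and each removal of a component of the pullback cycle costs at most one — combined with the degree $-2$ computation forcing it to cost exactly one at the $E_{u'}$ step. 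I would phrase this via Theorem \ref{th:hegy2rel} applied with $Z_1 = Z_r$, deducing that $\pi^*\calL$ restricted to $Z_{new}$ is generic enough in $r^{-1}(\pi^*\calL|_{Z_r})$ for the cohomology to be computed by the $\chi$-formula, and then the arithmetic $\chi(-\pi^*l') - \chi(-\pi^*l' + E_{u'}) = -(\pi^*l', E_{u'}) - \chi(E_{u'}) = 0 - 0 = 0$ together with $h^1(E_{u'}, \cdot) $ contributions pins down the difference as $1$. Checking that all the genericity hypotheses from the original $\calL$ on $\tX$ really do propagate to $\pi^*\calL$ on $\tX_{new}$ — in particular that blowing up generic points does not destroy $H^0(\cdot)_{reg}\ne\emptyset$ — is the technical point I would be most careful about.
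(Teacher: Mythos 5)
Your overall strategy is the right one—prove $h^1(Z_r,\pi^*\calL)\ge h^1(Z_{new},\pi^*\calL)-1$ and separately $h^1(Z_r,\pi^*\calL)<h^1(Z_{new},\pi^*\calL)$—and the first half of your argument is correct and in fact slightly cleaner than the paper's. From the restriction sequence
\begin{equation*}
0 \to \pi^*\calL\otimes\calO_{E_{u'}}(-Z_r) \to \pi^*\calL|_{Z_{new}} \to \pi^*\calL|_{Z_r} \to 0
\end{equation*}
and the computation $(\pi^*l'-Z_r,E_{u'})=-2$ on $E_{u'}\cong\P^1$, the kernel term has $h^0=0$, $h^1=1$, so the drop in $h^1$ is at most one. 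The paper gets the same bound on the dual side: representatives of $\Omega_D$ in $H^0(\calO_{\tX}(K+Z))$ vanishing at the first blow-up centre $p$ form a subspace of codimension at most one, and any such form loses its pole entirely by the time one reaches $E_{u'}$.

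The gap is in the strict inequality. From the long exact sequence, $h^1(Z_{new},\pi^*\calL)=h^1(Z_r,\pi^*\calL)+\dim\mathrm{coker}\,\delta$ where $\delta:H^0(Z_r,\pi^*\calL)\to H^1(\pi^*\calL\otimes\calO_{E_{u'}}(-Z_r))\cong\C$, so the drop equals one precisely when $\delta=0$, i.e.\ when every section of $\pi^*\calL$ on $Z_r$ extends to $Z_{new}$. Neither $H^0(Z_{new},\pi^*\calL)_{reg}\neq\emptyset$ nor Theorem~\ref{th:hegy2rel} gives you this: $\pi^*\calL$ is a specific bundle, not a relatively generic element of $r^{-1}(\pi^*\calL|_{Z_r})$, and the inequality of Theorem~\ref{th:hegy2rel} (take $l=0$) only yields $h^1(Z_{new},\pi^*\calL)\ge h^1(Z_r,\pi^*\calL|_{Z_r})$, the trivial direction. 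Moreover the hypothesis $h^1(Z',\calL)<h^1(Z,\calL)$ for $0\le Z'<Z$ cannot simply be ``transported'' to all subcycles of $Z_{new}$, since $Z_r$ is not the modified pullback of any cycle on $\tX$. The missing mechanism is the one the paper supplies via Laufer duality: $h^1(Z,\calL)=\dim\Omega_D$ for the divisor $D$ of a generic section, the hypothesis on subcycles forces a generic $\omega\in\Omega_D$ to have pole of order exactly $Z$ (so order $Z_u$ along $E_u$), and because the blow-up centres are generic the pole order decreases by exactly one at each step, leaving a first-order pole along $E_{u'}$; such an $\omega$ is excluded from the analogous space computing $h^1(Z_r,\pi^*\calL|_{Z_r})$, which is what forces the strict drop. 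Without this (or an equivalent argument that $H^0(Z_{new},\pi^*\calL)\to H^0(Z_r,\pi^*\calL)$ is onto), your proof does not close; note that the genericity of the blow-up centres must enter essentially at exactly this point.
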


\begin{proof}

Notice that every differential form in $H^1(\calO_Z)^* \cong \frac{H^0(\calO_{\tX}(K+Z'))}{H^0(\calO_{\tX}(K))}$ has got a pole on the exceptional divisor $E_{u'}$ of order at most $1$, since the order of a differential form decreases at every blow up with at least $1$.

Let's have a section $s \in H^0(Z, \calL)_{reg}$ and let's denote $|s| = D$.

Now let's have the line bundle $\calL_{new} = \pi^*(\calL)$ on the cycle $Z_{new}$, we  have $h^1(Z_{new}, \calL_{new}) = h^1(Z, \calL) = k$ and there is a section $s_{new} \in H^0(Z'_{new},\calL_{new})_{reg}$, such that $|s_{new}| = D$.

We claim first that $h^1(\calO_{Z_{r}}) < h^1(\calO_Z)$.
Indeed we know that there are differential forms in $\frac{H^0(\calO_{\tX}(K+Z))}{H^0(\calO_{\tX}(K))}$, which have got pole of order $Z$ since $h^1(Z', \calL) < h^1(Z, \calL)$ for every cyle $0 \leq Z' < Z$.

This means that $e_{Z_{new}}(u') \neq 0$ since we have blown up the singularity in generic points, so there is a a differential form in $\frac{H^0(\calO_{\tX}(K+Z))}{H^0(\calO_{\tX}(K))}$
which has got a pole on the exceptional divisor $E_{u'}$, it indeed proves $h^1(\calO_{Z_{r}}) < h^1(\calO_Z)$.

Let's denote by $\Omega_D \subset \frac{H^0(\calO_{\tX}(K+Z))}{H^0(\calO_{\tX}(K))}$ the subset of differential forms, which has got no pole on the divisor $D$, more precisely this means 
that $\Omega_D$ contains the differential forms which vanish on the image of the tangent map $T_D(c^{l'}(Z))$, see \cite{NNA1} Theorem 10.1.1.

Since $h^1(Z', \calL) < h^1(Z, \calL)$ for every cyle $0 \leq Z' < Z$ we know that there is a differential form in $\Omega_D$ which has got pole of order $Z$.

We have blown up the vertex $u$ sequentially in generic points, so we know that a generic differential form $w \in \Omega_D$ has got a pole on the exceptional divisor $E_{u'}$, which yields $h^1(Z_{r}, \calL_{new}| Z_r) <  h^1(Z_{new}, \calL_{new})$.

On the other hand representatives of the differential forms in $\Omega_D$ can be viewed as sections in $H^0(\calO_{\tX}(K+Z))$, let's denote a subspace, which represents $\Omega_D$ by $W \subset H^0(\calO_{\tX}(K+Z))$.

If $p$ is the first point, where we blow up the exceptional divisor $E_u$ towards the last exceptional divisor $E_{u'}$, then there is a codimension $1$ subspace of $W$, let's say $W'$, such that the sections in $W'$ vanish at the point $p$.

It means that there is a codimension $1$ subspace of $\Omega_D$, in which the differential forms hasn't got pole on the exceptional divisor $E_{u'}$.

This arguement shows that indeed $h^1(Z_{r}, \calL_{new} | Z_r) \geq h^1(Z_{new}, \calL_{new}) - 1$ and we get that $h^1(Z_{r}, \calL_{new} | Z_r) = h^1(Z_{new}, \calL_{new}) - 1$.

\end{proof}

\section{Outline of the proofs of Theorem\textbf{A} and Theorem\textbf{B}}

In this section we outline the proofs of Theorem\textbf{A} and Theorem\textbf{B} without going deeply into technical details, but showing the main ideas:

\bigskip

In the proof of Theorem\textbf{A} we have a line bundle $\calL_1 \in \pic^{l'}(Z)$ such that $k = h^1(Z, \calL_1) > q = \chi(-l') - \min_{0 \leq l \leq Z} \chi(-l' + l)$ and we wish to prove that there is a line bundle near $\calL_1$ such that its $h^1$ is $k-1$.

We prove the statement by induction on $h^1(\calO_Z)$, where the base case is trivial, so we can assume that $h^1(\calO_Z) = \alpha$ and the statement is true if $h^1(\calO_Z) < \alpha$.

By a standard arguement using Laufer sequences we can easily restrict to the case, when $Z > E$ and $l' \in -S'$.  

Next we prove the statement first in the case when the line bundle $\calL_1$ hasn't got fixed components, or equivalently $H^0(Z, \calL_1)_{reg} \neq \emptyset$, let us have a section $s \in H^0(Z, \calL_1)_{reg}$ and let its divisor be $D = |s| $.

Let the cohomological cyce of the line bundle $\calL_1$ be $Z'$, so $Z'$ is the minimal cycle such that $h^1(Z', \calL_1) = h^1(Z, \calL_1)$.
With another technical step we can easily restrict to the cycle $Z'$ and we are enough to prove the statement for the cycle $Z'$ and the line bundle $\calL_1 | Z'$.

\underline{After these technical steps comes the main part of the proof:}

We find a vertex $u$ such that $Z'_u \geq 2$ and blow up the exceptional divisor $E_u$ in generic points sequentially $Z'_u -1$ times, let us denote the final $-1$ curve by $E_{u'}$,
the new vertex set by $\calv_{new}$ and the new resolution by $\tX_{new}$.

Let us denote furthermore the cycle $Z'_{new} = \pi^*(Z') - \sum_{v \in \calv_{new} \setminus \calv} d(v) E_v$, where $d(v)$ is the distance of $v$ from the vertex set $\calv$, we have $h^1(\calO_{Z'_{new}}) = h^1(\calO_{Z'})$.

Let us have the line bundle $\calL_{new} = \pi^*(\calL_1)$ on the cycle $Z'_{new}$, we have $h^1(Z'_{new}, \calL_{new}) = h^1(Z, \calL_1) = k$ and there is a section $s_{new}\in H^0(Z'_{new},\calL_{new} )_{reg}$, such that $|s_{new}| = D| Z'$.

Let us denote the cycle $Z'_{new} | (\calv_{new} \setminus u') = Z'_{r}$, by our main Lemma\ref{minus} we get easily that $h^1(Z'_{r}, \calL_{new} | Z'_r) = h^1(Z'_{new}, \calL_{new}) - 1$ and $h^1(\calO_{Z'_{r}}) < h^1(\calO_{Z'})$.

Next we want to perturb the line bundle $\calL_{new}$ in $\pic^{\pi^*(l')}(Z'_{new})$ such that its restriction remains $\calL_{new} | Z'_r  := \calL_r$.

Let us have the restriction map $r: \pic^{\pi^*(l')}(Z'_{new}) \to \pic^{R(\pi^*(l'))}(Z'_{r})$, where $R$ is the cohomological restriction operator and notice that 
$\calL_{new} \in r^{-1}(\calL_r)$.

Let us have a generic line bundle $\calL'$ in $r^{-1}(\calL_r)$ near $\calL_{new}$, since $\calL' | Z'_r = \calL_r$ we get that 
$$h^1(Z'_{new}, \calL_{new})    \geq h^1(Z'_{new}, \calL') \geq h^1(Z'_{new}, \calL_{new}) - 1.$$

In the case $h^1(Z'_{new}, \calL') = h^1(Z'_{new}, \calL_{new}) - 1$ we can conclude easily our statement by pushing $\calL'$ into $\pic^{l'}(Z')$ with the isomorphism $\pic^{\pi^*(l')}(Z'_{new}) \cong \pic^{l'}(Z')$.

It means that we can assume that $h^1(Z'_{new}, \calL') = h^1(Z'_{new}, \calL_{new}) = k$, which means that the cohomology number $h^1(Z'_{new}, \calL')$ equals the cohomology of a relatively generic line bundle in $r^{-1}(\calL_r)$.

It means by Theorem\ref{th:hegy2rel} that there exists an effective cycle $0 \leq B \leq Z'_{new}$ such that we have:

\begin{equation*}
k = h^1(Z'_{new}, \calL') = \chi(-\pi^*(l')) - ( \chi( -\pi^*(l') + B) - h^1( (Z'_{new}- B)_r, \calL_r(-B) )).
\end{equation*}

Let us denote $q' =  h^1( (Z'_{new}- B)_r, \calL'' )$, where $\calL''$ is a generic line bundle in $\pic^{R(\pi^*(l') - B)}((Z'_{new}- B)_r)$.

One gets easily that $h^1(\calO_{(Z'_{new}- B)_r}) \leq h^1(\calO_{Z'_r}) < h^1(\calO_{Z'})$ and $h^1( (Z'_{new}- B)_r, \calL_r(-B)) > q'$.

By the induction hypothesis we get that there is a line bundle $\calL'_r$ on the cycle $Z'_{r}$, which is a degeneration of the line bundle $\calL_r$, such that $h^1( (Z'_{new}- B)_r, \calL'_r(-B)) = h^1( (Z'_{new}- B)_r, \calL_r(-B)) - 1$.

Let us have a relatively generic line bundle $\calL_2 \in \pic^{\pi^*(l')}(Z'_{new})$ corresponding to the modified restricted line bundle $\calL'_r$ on the cycle $Z'_{r}$.

Since $\calL_2$ is a degeneration of $\calL'$ we immediately know that $h^1(Z'_{new}, \calL_2) \leq h^1(Z'_{new}, \calL') = k$, on the other hand we get easily again using Theorem\ref{th:hegy2rel} that $h^1(Z'_{new}, \calL_2) \geq k-1$.

If $h^1(Z'_{new}, \calL_2) = k-1$, then we can conlude easily.
If $h^1(Z'_{new}, \calL_2) = k$ then we can repeat the same procedure again to get another degeneration.

One can show that this procedure must terminate in finite time which finishes the proof in the case $H^0(Z, \calL_1)_{reg} \neq \emptyset$.
The case when $H^0(Z, \calL_1)_{reg} = \emptyset$ can be shown easily using the already proved case $H^0(Z, \calL_1)_{reg} \neq \emptyset$ which finishes the proof of Theorem \textbf{A}

\bigskip

The method for proving Theorem\textbf{B} is very similar, only technically more difficult since it depends also on Laufer's deformation theoretic results, so we will give only a short
outline of the proof in philosophical terms.

Let us have a resolution graph $\mathcal{T}$, an effective cycle $Z$ and a Chern class $l'$ which satisfy the conditions of the theorem and suppose that we have a singularity $(X, 0)$
with resolution $\tX$ and resolution graph $\mathcal{T}$ such that $k = h^1(\calO_Z(l')) > \chi(-l') - \min_{0 \leq l \leq Z} \chi(-l' + l) := q$.

We want to find another singularity with resolution $\tX'$ and resolution graph $\mathcal{T}$ such that the value $h^1(\calO_Z(l'))$ on the new resolution equals $k-1$.

By similar technical steps as in the proof of Theorem\textbf{A} we can assume that $|Z|$ is connected and $(l', E_v) \geq 0$ for every $v \in |Z|$ (notice that we cannot assume $Z \geq E$
in this case).

We prove the statement by induction on $h^1(\calO_Z)$, the base case is trivial so assume that $h^1(\calO_Z) = \alpha$ and we know the statement if $h^1(\calO_Z) < \alpha$.

We again prove the statement in the case when the line bundle $\calO_Z(l')$ hasn't got fixed components, or equivalently $H^0(\calO_Z(l'))_{reg} \neq \emptyset$, the general case 
can be proved easily using this special case. 

Let us have a section $s \in H^0(\calO_Z(l'))_{reg}$ and its divisor $D = |s|$.

Let the cohomological cyce of the line bundle $\calO_Z(l')$ be $Z''$, so $Z''$ is the minimal cycle such that $h^1(\calO_{Z''}(l')) = h^1(\calO_Z(l'))$.
With another technical step we can easily restrict to the cycle $Z''$ and we are enough to prove the statement for the cycle $Z''$ and the restsricted natural line bundle $\calO_{Z''}(l')$.

\underline{After these technical steps comes the main part of the proof:}

One can show that there is a vertex $u$ such that $Z''_u \geq 2$ and let us blow up the exceptional divisor $E_u$ in a generic points sequentially $(Z''_u - 1)$ times.

Let us denote the final $-1$-curve by $E_{u'}$, the new vertex set by $\calv_{new}$, and the new resolution by $\tX_{new}$.

Let us denote furthermore  the cycle $Z''_{new} = \pi^*(Z'') - \sum_{v \in \calv_{new} \setminus \calv} d(v) E_v$, where $d(v)$ is the distance of $v$ from the vertex set $\calv$, we have $h^1(\calO_{Z''_{new}}) = h^1(\calO_{Z''})$.

Let's have the line bundle $\calO_{Z''_{new}}(\pi^*(l')) = \pi^*(\calO_{Z''}(l')) | Z''_{new}$ on the cycle $Z''_{new}$, we have $h^1(\calO_{Z''_{new}}(\pi^*(l'))) = h^1(\calO_{Z''}(l')) = k$ and there is a section $s_{new}\in H^0(\calO_{Z''_{new}}(\pi^*(l')))_{reg}$, such that $|s_{new}| = D|Z''$ (since the origins of the blow ups were disjoint from $|D|$).

Let us denote the cycle $Z''_{new} | (\calv_{new} \setminus u') = Z''_{r}$, using our main Lemma\ref{minus} one can show that $h^1(\calO_{Z''_{r}}) < h^1(\calO_{Z''})$ and
$h^1( \calO_{Z''_{r}}(\pi^*(l')) ) = h^1( \calO_{Z''_{new}}(\pi^*(l'))) - 1 = k-1$.

Next what we do is very similar to what we did in the proof of Theorem\textbf{A}, only with some technical modifications what we ommit here for the sake of simpleness:

We deform the analytic structure of the cycle $Z''_{new}$ while keeping the analytic structure of $Z''_r$ fixed and let us denote a relatively generic structure by
$Z''_{new, p}$ where $Z''_r$ is also a subcycle of $Z''_{new, p}$.

We get that $h^1( \calO_{Z''_{new, p}}(\pi^*(l'))) \geq h^1( \calO_{Z''_{r}}(\pi^*(l')) ) = k-1$ (here we need a technical modification in the precise proof to make sure that the line bundle $\calO_{Z''_{r}}(\pi^*(l'))$ stays the same for the deformed analytic structure).
On the other hand $h^1( \calO_{Z''_{new, p}}(\pi^*(l'))) \leq h^1( \calO_{Z''_{new}}(\pi^*(l'))) = k$ since the analytic structure $Z''_{new, p}$ is a degeneration of $Z''_{new}$.

If $h^1( \calO_{Z''_{new, p}}(\pi^*(l'))) = k-1$ then we can get our statement easily so we can assume that for the relatively generic analytic structure $Z''_{new, p}$ one has also
$h^1( \calO_{Z''_{new, p}}(\pi^*(l'))) = k$.

Next from Theorem\ref{relgen2} we compute $h^1( \calO_{Z''_{new, p}}(\pi^*(l')))$ from cohomology numbers of restricted natural line bundles on the cycle $Z''_r$.
We use our induction hypothesis for certain natural line bundles on $Z''_r$ to deform its analytic structure properly such that a corresponding relatively generic analytic structure $Z''_{new, p_2}$ which will satisfy $h^1( \calO_{Z''_{new, p_2}}(\pi^*(l'))) = k-1$.
This will finish the proof in the case $H^0(\calO_Z(l'))_{reg} \neq \emptyset$ and then the general case will follow from it easily, which proves Theorem\textbf{B} completely.

\section{The possible values of $h^1$ of line bundles}

In the case of elliptic singularities if we fix a minimal elliptic resolution graph $\mathcal{T}$, and we look at the possible analytic singularities supported on this resolution graph, then the possible values of $p_g$ can be anything between its minimal value $1$, and it's maximal value, which is the Seiberg-Witten invariant in this case, see for example \cite{R}.

We want to generalise this statement to arbitrary rational homology sphere resolution graphs $\mathcal{T}$, and instead of $p_g = h^1(\calO_{\tX})$ to $h^1(\calO_{Z}(l'))$, so to the values of $h^1$ of an arbitrary natural line bundle on an arbitrary cycle.
 
Although in these cases we don't know the maximal value of these invariants in general, we will still prove that the possible values form an interval.

In this section we consider first a somewhat easier version of our main problem, namely we don't change the analytic types on the resolution graph $\mathcal{T}$, but we fix an analytic type 
$\tX$ and a Chern class $l'$, an effective cycle $Z$ and we look at the $h^1$ stratification of $\pic^{l'}(Z)$, we prove the following theorem in this case:

\begin{theorem}
Let us have an arbitrary rational homology sphere resolution graph $\mathcal{T}$ and a corresponding singularity $(X, 0)$ with resolution $\tX$ and resolution graph $\mathcal{T}$, an effective cycle $Z$ and an abitrary Chern class $l'$. 

Let us denote $k = \max_{\calL \in \pic^{l'}(Z)} h^1(Z, \calL)$ and an arbitrary integer $r$ such that $ \chi(-l') - \min_{0 \leq l \leq Z} \chi(-l'+ l) \leq r \leq k$, then there is a line bundle $\calL \in \pic^{l'}(Z)$, such that $h^1(Z, \calL) = r$.
\end{theorem}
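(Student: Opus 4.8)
The plan is to prove the theorem by induction on $h^1(\calO_Z)$, showing the stronger local statement: if $\calL_1 \in \pic^{l'}(Z)$ has $k = h^1(Z,\calL_1) > q := \chi(-l') - \min_{0\leq l\leq Z}\chi(-l'+l)$, then there is a line bundle arbitrarily close to $\calL_1$ in $\pic^{l'}(Z)$ with $h^1$ equal to $k-1$. Since the set of $\calL$ with $h^1(Z,\calL)\geq m$ is Zariski closed, producing, for each achieved value $k>q$, a nearby bundle with value exactly $k-1$ immediately yields that every integer in $[q,k_{\max}]$ is attained, so the possible values form an interval. The base case $h^1(\calO_Z)=0$ is trivial (then $q=0$ and the only value is $0$). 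For the inductive step, assume the claim whenever $h^1(\calO_Z)<\alpha$, and suppose $h^1(\calO_Z)=\alpha$.

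\textbf{Reductions.} First I would reduce to the case $Z > E$ and $l'\in -\calS'$ by a standard Laufer-sequence argument (adding components to $E$ that do not change the relevant cohomology, and discarding $E^*$-components outside the support). Next I would treat first the case $H^0(Z,\calL_1)_{reg}\neq\emptyset$; the case of fixed components follows from it by a separate, easier argument. Pick $s\in H^0(Z,\calL_1)_{reg}$ with divisor $D=|s|$, and pass to the cohomological cycle $Z'$ of $\calL_1$ (the minimal cycle with $h^1(Z',\calL_1)=h^1(Z,\calL_1)$); a further technical step lets me assume $Z=Z'$, so in particular $h^1(Z'',\calL_1)<h^1(Z,\calL_1)$ for all $0\leq Z''<Z$.

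\textbf{Main step.} Now I would invoke Lemma~\ref{minus}. Choose a vertex $u\in|Z|$ with $Z_u\geq 2$ (such $u$ exists once $Z$ is the cohomological cycle of a bundle with $h^1>0$ on a graph supporting this much cohomology — this needs a short argument), and blow up $E_u$ sequentially $Z_u-1$ times at generic points, ending in a $(-1)$-curve $E_{u'}$; set $\tX_{new}$, $\calv_{new}$, $Z_{new}=\pi^*(Z)-\sum_{v\in\calv_{new}\setminus\calv}d(v)E_v$ and $Z_r=Z_{new}-E_{u'}$. By Lemma~\ref{minus}, $h^1(Z_r,\pi^*\calL_1)=k-1$ and $h^1(\calO_{Z_r})<h^1(\calO_Z)$. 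Consider the restriction $r:\pic^{\pi^*(l')}(Z_{new})\to\pic^{R(\pi^*(l'))}(Z_r)$; then $\pi^*\calL_1\in r^{-1}(\calL_r)$ where $\calL_r=\pi^*\calL_1|Z_r$. Take $\calL'$ generic in $r^{-1}(\calL_r)$ near $\pi^*\calL_1$; since $\calL'|Z_r=\calL_r$ we get $k-1\leq h^1(Z_{new},\calL')\leq k$. If the value is $k-1$ we push $\calL'$ back to $\pic^{l'}(Z)$ via the isomorphism $\pic^{\pi^*(l')}(Z_{new})\cong\pic^{l'}(Z)$ and are done. Otherwise $h^1(Z_{new},\calL')=k$ equals the relatively generic value, so by Theorem~\ref{th:hegy2rel} there is $0\leq B\leq Z_{new}$ with $k=\chi(-\pi^*l')-(\chi(-\pi^*l'+B)-h^1((Z_{new}-B)_r,\calL_r(-B)))$. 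Here $h^1(\calO_{(Z_{new}-B)_r})\leq h^1(\calO_{Z_r})<\alpha$, and one checks $h^1((Z_{new}-B)_r,\calL_r(-B))>q'$ where $q'$ is the generic value in the relevant $\pic$; so the induction hypothesis (applied on $Z_r$, to which the bundle $\calL_r$ restricts) produces a degeneration $\calL_r'$ of $\calL_r$ with $h^1((Z_{new}-B)_r,\calL_r'(-B))=h^1((Z_{new}-B)_r,\calL_r(-B))-1$. Taking $\calL_2\in\pic^{\pi^*(l')}(Z_{new})$ relatively generic over $\calL_r'$, we get $k-1\leq h^1(Z_{new},\calL_2)\leq k$; if equal to $k-1$ we conclude, and if equal to $k$ we repeat.

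\textbf{Main obstacle.} The delicate point — and the one I expect to carry the real weight — is showing that this iteration \emph{terminates}: each repetition replaces the restricted bundle by a degeneration whose cohomology over $(Z_{new}-B)_r$ has dropped by one, while $h^1(\calO_{Z_r})$ is bounded, so one must exhibit a monotone finitely-bounded quantity (such as the sum of $h^1$ over the relevant subcycles, or the dimension of the $h^1$-stratum) that strictly decreases. Once termination is established, the case $H^0(Z,\calL_1)_{reg}=\emptyset$ is handled by factoring out the fixed part: write $\calL_1=\calL_1'(-l_{fix})$ with $\calL_1'$ having $H^0_{reg}\neq\emptyset$ on a smaller cycle, apply the already-proved case there, and transport back — this uses only that $h^1$ is unchanged by removing a fixed component up to the obvious bookkeeping. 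Combined, these give that every integer in $[q,k]$ is a value of $h^1(Z,\cdot)$ on $\pic^{l'}(Z)$, completing the theorem.
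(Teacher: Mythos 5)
Your proposal follows essentially the same route as the paper's own proof: the same induction on $h^1(\calO_Z)$, the same reduction to the cohomological cycle, the same blow-up construction feeding into Lemma~\ref{minus}, the same perturbation within the fiber of the restriction to $Z_r$ combined with Theorem~\ref{th:hegy2rel} and the induction hypothesis, and the same termination argument via the strictly decreasing sum of $h^1$ over subcycles. The handling of the fixed-component case by passing to $\calL_1(-A)$ on $Z-A$ also matches the paper, so the plan is correct and not genuinely different from the published argument.
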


\begin{proof}

If $l' \notin - S'_{|Z|}$, then by Laufer sequence we can find a cycle $l \in L$, such that $R(l'-l) \in - S'_{|Z-l|}$ and $h^0(Z, \calL) = h^0(Z-l, \calL(- l))$
for every line bundle $\calL \in \pic^{l'}(Z)$.

It can happen, that $Z-l$ is nonconnected, but we can treat it's connected components seperately, which means that we can assume in the following that $Z > E$ and
$l' \in - S'$, so we have $l' = \sum_{v \in \calv} a_v E_v^*$ and $a_v \leq 0$ for each $v \in \calv$.

We prove the statement by induction on $h^1(\calO_Z)$.

If $h^1(\calO_Z) = 0$, then $\pic^{l'}(Z)$ consists of just one line bundle, so we have $\min_{\calL \in \pic^{l'}(Z)} h^1(Z, \calL) = \chi(-l') - \min_{0 \leq l \leq Z} \chi(-l'+ l)$, and there is nothing to prove in this case.

So assume in the following that $h^1(\calO_Z) = \alpha$ and we know the statement if $h^1(\calO_Z) < \alpha$.

From \cite{NNA1} we know that for a generic line bundle $\calL' \in \pic^{l'}(Z)$ one has $h^1(Z, \calL') = \chi(-l') - \min_{0 \leq l \leq Z} \chi(-l'+ l) := q$.

It means that $q$ is the minimal possible value of $h^1$ of the line bundles in $\pic^{l'}(Z)$.

So assume in the following that there is a line bundle $\calL_1 \in \pic^{l'}(Z)$ with $h^1(Z, \calL_1) = k$ and assume that $k > q$, obviously we are enough to construct a line bundle near $\calL_1$, whose $h^1$ is $k-1$.

We know that there is a minimal cycle $A$, such that $H^0(Z, \calL_1) = H^0(Z- A, \calL_1(- A)) $ and $\calL_1(- A)  \in \im( c^{l' - A}(Z-A))$, $A$ is the fixed component of the line bundle $\calL_1$ on $Z$, let's prove the statement first in a special case:

\subsection{Proof in the case $A = 0$, or equivalently $\calL_1 \in \im( c^{l' }(Z))$.}

In fact we will prove that there is a map $f: (\bC, 0) \to (\pic^{l'}(Z), \calL_1)$, such that $f(0) = \calL_1$ and for a generic element $t \in (\bC, 0) $, one has $h^1(f(t)) = k-1$.

Notice that the subsets $W_{l'}^r = (\calL \in \pic^{l'}(Z) | h^0(Z, \calL) \geq r)$ are constructible analytic subsets of $\pic^{l'}(Z)$.

It means that our statement is equivalent to the statement that there are line bundles $\calL \in \pic^{l'}(Z)$ arbitrary close to $\calL_1$ in the classical topology, such that $h^1(Z, \calL) = k-1$ (theoretically we will really just use this formulation in the proofs, but in terms of the notations sometimes it will be more convinient to use the formulation with the map $f$).

So assume in the following that $\calL_1 \in \im( c^{l' }(Z))$ and let's have a generic section $s \in H^0(Z, \calL_1)_{reg}$ and its divisor $|s| = D$.

Let's denote by $\Omega_D \subset \frac{H^0(\calO_{\tX}(K+Z))}{H^0(\calO_{\tX}(K))}$ the subset of differential forms, which has got no pole on the divisor $D$, more precisely this means
that $\Omega_D$ contains the differential forms which vanish on the image of the tangent map $T_D(c^{l'}(Z))$.

Assume that a generic differential form $w \in \Omega_D$ has got a pole of order $Z'$ on the exceptional divisors, where $Z'_v = 0$ if $w$ hasn't got a pole on $E_v$, now obviously we have $Z' \leq Z$, we know that $\Omega_D \subset \frac{H^0(\calO_{\tX}(K+Z'))}{H^0(\calO_{\tX}(K))}$.

\begin{lemma}
We can reduce our statement to $ Z'$, so we are enough to prove that there is a map $f': (\bC, 0) \to \pic^{l'}(Z')$, such that $f'(0) = \calL_1 | Z'$ and for a generic element $t \in (\bC, 0) $, one has $h^1(f'(t)) = k-1$.
\end{lemma}
\begin{proof}

Notice that $\Omega_D$ contains differential forms which vanish on the image of the tangent map $T_{\tau(D)}(c^{l'}(Z'))$, where $\tau$ is the restriction $\tau: \eca^{l'}(Z) \to \eca^{l'}(Z')$.

This means that $h^1(Z', \calL_1|Z') \geq h^1(Z, \calL_1)$ but $h^1(Z', \calL_1|Z') \leq h^1(Z, \calL_1)$ holds obviously so we have $h^1(Z', \calL_1 | Z') = h^1(Z, \calL_1) = k$.

We know that $Z'$ is the minimal cycle, such that $Z' \leq Z$ and $h^1(Z', \calL_1|Z') = h^1(Z, \calL_1) = k$, and obviously we have $h^1(Z', \calL_1) >  \chi(-l') - \min_{0 \leq l \leq Z'} \chi(-l'+ l)$.

If we know the statement in case $ Z'$ then we get that there is a map $f': (\bC, 0) \to \pic^{l'}(Z')$, such that $f(0) = \calL_1 | Z'$ and for a generic element $t \in (\bC, 0) $, one has $h^1(f'(t)) = k-1$.

We can extend the map $f'$ to a map $f'' : (\bC, 0) \to \pic^{l'}(Z)$, such that $f' = \pi \circ f''$, where $\pi : \pic^{l'}(Z) \to \pic^{l'}(Z')$ is the natural projection.

Now for a generic element $t \in (\bC, 0) $ we have $h^1(f'(t)) = k-1$ and $ h^1(f''(t)) \geq  h^1(f'(t)) = k-1$ and by semicontinuity we have $ h^1(f''(t)) \leq h^1(Z, \calL_1) = k$.

If $h^1(f''(t)) = k-1$, then we are done, so we can assume that $k = h^1(f''(t)) > h^1(f'(t))$, which means that $Z'$ isn't the minimal cycle, such that $Z' \leq Z$ and $h^1(Z', f''(t)) = h^1(Z, f''(t))$.

We know that $f''(t)$ is a degeneration of $\calL_1$ with $k = h^1(f''(t))$ and this minimal cycle can change only finitely many times by passing to degenerations.

Indeed the minimal cycle $Z'$, such that $Z' \leq Z$ and $h^1(Z', \calL_1) = h^1(Z, \calL_1)$ depends on the set of cohomology numbers $(h^1(A, \calL_1) | A \leq Z)$, so if $Z'$
decreases after a degeneration of the line bundle $\calL_1$ then one of the cohmology numbers $h^1(A, \calL_1)$ decreases which can happen only finitely many times since
$\sum_{0 \leq A \leq Z} h^1(A, \calL_1)$ is finite.

It means that repeating this argument finitely many times we get a degeneration $\calL$ of the line bundle $\calL_1$, such that $h^1(Z, \calL) = k-1$ and this proves our lemma.
\end{proof}

So we are enough to prove that there is a map $f': (\bC, 0) \to \pic^{l'}(Z')$, such that $f'(0) = \calL_1 | Z'$ and for a generic element $t \in (\bC, 0) $, one has $h^1(f'(t)) = k-1$.

It is obviuous as usual that we can assume $Z' \geq E$ and so $|Z'|$ is connencted, and if $h^1(\calO_{Z'}) < h^1(\calO_Z)$, then the statement follows from the induction hypothesis, so we can assume that $h^1(\calO_{Z'}) = h^1(\calO_Z) > 0$.

It means that we have $\Omega_D \subset \frac{H^0(\calO_{\tX}(K+Z'))}{H^0(\calO_{\tX}(K))} = \frac{H^0(\calO_{\tX}(K+Z))}{H^0(\calO_{\tX}(K))}$.

Let's denote the subset of vertices $I = (v \in \calv | Z'_v \geq 2)$, we know that $I \neq 0$ because of $h^1(\calO_{Z'}) > 0$, so let's pick an arbitrary vertex $u \in I$.

Let's blow up the exceptional divisors $E_u$ in generic points sequentially $Z'_u - 1$ times, let's denote the final $-1$ curve by $E_{u'}$, let's denote the new vertex set by $\calv_{new}$, the new resolution by $\tX_{new}$.

Let's denote furthermore $Z'_{new} = \pi^*(Z') - \sum_{v \in \calv_{new} \setminus \calv} d(v) E_v$, where $d(v)$ is the distance of $v$ from the vertex set $\calv$, we have $h^1(\calO_{Z'_{new}}) = h^1(\calO_{Z'})$.

Notice that this means that every differential form in $\frac{H^0(\calO_{\tX}(K+Z'))}{H^0(\calO_{\tX}(K))} $ has got a pole on the exceptional divisor $E_{u'}$ of order at most $1$.

Let's have the line bundle $\calL_{new} = \pi^*(\calL_1)$ on the cycle $Z'_{new}$, we have $h^1(Z'_{new}, \calL_{new}) = h^1(Z, \calL_1) = k$ and there is a section $s_{new}\in H^0(Z'_{new},\calL_{new} )_{reg}$, such that $|s_{new}| = D$ (since the origins of the blow ups are disjoint from $|D|$).

Let's denote the cycle $Z'_{new} | (\calv_{new} \setminus u') = Z'_{r}$, we claim that $h^1(\calO_{Z'_{r}}) < h^1(\calO_{Z'})$:

We know that there are differential forms in $\frac{H^0(\calO_{\tX}(K+Z'))}{H^0(\calO_{\tX}(K))} $, which have got pole of order $Z'$.

Indeed $Z'$ is the minimal cycle such that $h^1(Z', \calL_1) = h^1(Z, \calL_1)$, so the generic element of $\Omega_D$ has got pole of order $Z'$.

It means that $e_{Z'_{new}}(u') \neq 0$, which indeed yields $h^1(\calO_{Z'_{r}}) < h^1(\calO_{Z'})$.

Now by Lemma\ref{minus} we have that $h^1(Z'_{r}, \calL_{new} | Z'_r) = h^1(Z'_{new}, \calL_{new}) - 1$.

\textbf{Next we want to perturb the line bundle $\calL_{new}$ in $\pic^{\pi^*(l')}(Z'_{new})$ such that its restriction remains $\calL_{new} | Z'_r$:}

Notice that the candidate line bundles are in $\calL_{new} + V_{Z'_{new}}(u') \subset \pic^{\pi^*(l')}(Z'_{new}) \cong \pic^{l'}(Z)$. 
Since we want the $h^1$ to be the least as possible, we want to chose most generic line bundle in $\calL_{new} + V_{Z'_{new}}(u')$.

So let's have a generic line bundle $\calL'$ in $ \calL_{new} + V_{Z'_{new}}(u') \subset \pic^{\pi^*(l')}(Z'_{new}) \cong \pic^{l'}(Z)$ near the line bundle $\calL_{new}$.

We have by semicontinuity of $h^1$ that $h^1(Z'_{new}, \calL') \leq h^1(Z'_{new}, \calL_{new}) = k$ and on the other hand we have $\calL' | Z'_r = \calL_{new}|  Z'_r$, which yields that $h^1(Z'_{new}, \calL') \geq h^1(Z'_{r}, \calL_{new} | Z'_r) = h^1(Z'_{new}, \calL_{new}) - 1 = k-1$.

\underline{Assume first that $h^1(Z'_{new}, \calL') = k-1$:}

In this case by the isomorphism $ \pic^{\pi^*(l')}(Z'_{new}) \cong \pic^{l'}(Z')$, which is given by the pullback of the blowup map one gets a line bundle $\calL'_1$ arbitrary near to $\calL_1$, such that $h^1(Z', \calL'_1) = k-1$, and we are done.

\underline{It means that we can assume $h^1(Z'_{new}, \calL') = k$:}

Notice that $\calL'$ is a relatively generic line bundle on $Z'_{new}$ with Chern class $\pi^*(l')$ corresponding to the restricted line bundle $\calL' | Z'_r = \calL_{new}|  Z'_r$, let's denote this restricted line bundle by $\calL_r$.

By Theorem\ref{th:hegy2rel} we get that:

\begin{equation*}
k = h^1(Z'_{new}, \calL') = \chi(-\pi^*(l')) - \min_{0 \leq l \leq Z'_{new}}( \chi( -\pi^*(l') + l) - h^1( (Z'_{new}- l)_r, \calL_r(-l) )).
\end{equation*}

It means that there exists an effective cycle $0 \leq B \leq Z'_{new}$ such that we have:

\begin{equation*}
k = h^1(Z'_{new}, \calL') = \chi(-\pi^*(l')) - ( \chi( -\pi^*(l') + B) - h^1( (Z'_{new}- B)_r, \calL_r(-B) ) ).
\end{equation*}

It is obvious that $h^1(\calO_{(Z'_{new}- B)_r}) \leq h^1(\calO_{Z'_r}) < h^1(\calO_{Z'})$ and let's denote $q' =  h^1( (Z'_{new}- B)_r, \calL'' )$, where $\calL''$ is a generic line bundle in $\pic^{R(\pi^*(l') - B)}((Z'_{new}- B)_r)$.

If $\calL_{gen} \in \pic^{\pi^*(l')}(Z'_{new})$ is a generic line bundle, then we have the exact sequence:

\begin{equation*}
0 \to    H^0(Z'_{new}- B, \calL_{gen}(- B))    \to H^0(Z'_{new}, \calL_{gen}) \to H^0(B, \calL_{gen}) \to H^1(Z'_{new}- B, \calL_{gen}(- B)),
\end{equation*}

which yields that  $q \geq \chi(-\pi^*(l')) - \chi( -\pi^*(l') + B) + q'$, s \textbf{ it means that we have $h^1( (Z'_{new}- B)_r, \calL_r(-B)) > q'$.}

By the induction hypothesis we know that there is a line bundle $\calL'_r$ on the cycle $Z'_{r}$, which is a degeneration of the line bundle $\calL_r$, such that $h^1( (Z'_{new}- B)_r, \calL'_r(-B)) = h^1( (Z'_{new}- B)_r, \calL_r(-B)) - 1$.

Now let's look at a relatively generic line bundle $\calL_2 \in \pic^{\pi^*(l')}(Z'_{new})$ corresponding to the modified restricted line bundle $\calL'_r$ on the cycle $Z'_{r}$.

Since $\calL_2$ is a degeneration of $\calL'$ we immediately know that $h^1(Z'_{new}, \calL_2) \leq h^1(Z'_{new}, \calL') = k$.

On the other hand we have:
\begin{equation*}
h^1(Z'_{new}, \calL_2) \geq  \chi(-\pi^*(l')) -  \chi( -\pi^*(l') + B) +  h^1( (Z'_{new}- B)_r, \calL_2(-B)| (Z'_{new}- B)_r ).
\end{equation*}

\begin{equation*}
h^1(Z'_{new}, \calL_2) \geq \chi(-\pi^*(l')) -  \chi( -\pi^*(l') + B) +  h^1( (Z'_{new}- B)_r,  \calL'_r(-B)).
\end{equation*}

Since we know that $h^1( (Z'_{new}- B)_r, \calL'_r(-B)) = h^1( (Z'_{new}- B)_r, \calL_r(-B)) - 1$ we get that $h^1(Z'_{new}, \calL_2) \geq h^1(Z'_{new}, \calL') - 1 = k-1$.

If $h^1(Z'_{new}, \calL_2) = k-1$, then we are done, on the other hand if $h^1(Z'_{new}, \calL_2) = k$ we can repeat the same procedure again to get another degeneration.

Notice that while this degeneration step we have:
\begin{equation*}
\sum_{0 \leq l \leq Z'_{new}} h^1( (Z'_{new}- l)_r, \calL'(-l) | (Z'_{new}- l)_r) < \sum_{0 \leq l \leq Z'_{new}} h^1( (Z'_{new}- l)_r, \calL_2(-l) | (Z'_{new}- l)_r).
\end{equation*}

Indeed, this is because of semicontinuity of $h^1$ and because of the fact that $h^1( (Z'_{new}- B)_r, \calL'_r(-B)) = h^1( (Z'_{new}- B)_r, \calL_r(-B)) - 1$.

This means that this degeneration process can repeat only finitely many times, so finally we got a degeneration $\calL_1$ of the line bundle $\calL_{new}$ such that $h^1(Z'_{new}, \calL_1) = k-1$.

Using the isomorphism $ \pic^{\pi^*(l')}(Z'_{new}) \cong \pic^{l'}(Z)$ this completes the proof in the case $A = 0$.

\subsection{Proof in the general case}
Next we want to prove the statement in the general case without the assumption $A = 0$.

We know that there are only finitely many possibilities for the fixed component cycle $A$ of a line bundle $\calL_1 \in  \pic^{l'}(Z)$, namely the cycles $0 \leq A \leq Z$.

Assume that $\calL_1 \in \pic^{l'}(Z)$ with $h^1(Z, \calL_1) = k$ and $k > q$, and let's denote the fixed component cycle of the line bundle $\calL_1 \in \pic^{l'}(Z)$ by $A$.

It means that we have:
\begin{equation*}
k = h^1(Z, \calL_1) = \chi(-l') - \chi(-l' + A) + h^1(Z-A, \calL_1(- A)).
\end{equation*}

We know obviously that:
\begin{equation*}
\chi(-l') - \chi(-l' + A) + \chi(-l' + A) - \min_{0 \leq l \leq Z-A} \chi(-l' + A + l) \leq \chi(-l') - \min_{0 \leq l \leq Z} \chi(-l'+ l),
\end{equation*}
which means that $h^1(Z-A, \calL_1(- A)) > \chi(-l' + A) - \min_{0 \leq l \leq Z-A} \chi(-l' + A + l)$.

\emph{We know that $\calL_1(- A) \in \im( c^{l' - A}(Z-A))$, so we can apply the statement already proved in the special case $A = 0$:}

This means that there is a map $f: (\bC, 0) \to \pic^{l' - A}(Z-A)$, such that $f(0) = \calL_1(- A) $ and for a generic element $t \in (\bC, 0) $, one has $h^1(f(t)) =  h^1(Z-A, \calL_1(- A)) - 1$., let's denote $f(u)$ by $\calL_u$.

Let's have now a generic element $t \in (\bC, 0) $ and let's denote $f(t) = \calL_t$, so we have $h^1(Z-A, \calL_t) = h^1(Z-A, \calL_1(- A)) - 1$.

Now let's have a map $g: (\bC, 0) \to \pic^{l'}(Z)$, such that $g(u)(-A)  | (Z-A) = f(u)$.

For a generic element $u \in (\bC, 0)$ let's denote the line bundle $g(u)$ by $\calL_{u, 1} \in \pic^{l'}(Z)$, where we have $\calL_{u, 1}(- A) | Z-A = \calL_u$.

We have that $h^1(Z, \calL_{u, 1}) \geq \chi(-l') - \chi(-l' + A) + h^1(Z-A, \calL_u) = k-1$.

On the other hand  by semicontinuity of $h^1$ we know that $h^1(Z, \calL_{u, 1}) \leq h^1(Z, \calL_{ 1}) = k$.

If $h^1(Z, \calL_{u, 1}) = k-1$, then we are done, so we can assume that for a generic element $u\in (\bC, 0)$ one has $h^1(Z, \calL_{u, 1}) = k$.

This means that the fixed point cycle of the line bundle $\calL_{u, 1}$ on the cycle $Z$ is not $A$.

Now notice that the fixed point cycle $A$ is determined by the set of cohomology numbers $(h^0(Z- l - E_I, \calL_1(- l - E_I)) | 0 \leq l \leq Z, I \subset |Z-l|)$.

Indeed $A$ is the minimal cycle such that $h^0(Z- A - E_I, \calL_1(- A - E_I)) < h^0(Z- A , \calL_1(- A))$ for every set $ I \subset |Z-A|$.

Notice that the line bundle $\calL_{u, 1}$ is a degeneration of the line bundle $\calL_1$, which means that $\sum_{0 \leq l \leq Z, I \subset |Z-l|} h^0(Z- l - E_I, \calL_1(- l - E_I)) > \sum_{0 \leq l \leq Z, I \subset |Z-l|} h^0(Z- l - E_I, \calL_{u, 1} - l - E_I)$, because of semicontinuity of $h^0$, and because of the fact that the fixed point cycle changes.

This means that we can repeat this degeneration process only finitely many times, and finally we get a degeneration $\calL$ of the line bundle $\calL_1$, such that $h^1(Z, \calL) = k-1$.
This proves our theorem in the general case.
\end{proof}

\section{The possible values of $h^1$ of natural line bundles}

In the following we want to prove the same type of theorem as in the last section, but now we are looking at the values of the $h^1$ cohomology number of natural line bundles like $\calO_Z(l')$ while we are moving the analytic structure of the singularity fixing the topological type.

The proof of our main theorem will be similar as in the case of line bundles, but it will be more technical since we should Laufer's deformation results in it:

\begin{theorem}
Let $\mathcal{T}$ be an arbitrary rational homology sphere resolution graph with vertex set $\calv$ and let $Z$ be an effective cycle on it, and suppose furthermore that $l' \in L'$ is a Chern class, such that $l' = \sum_{v \in \calv} a_v E_v^* \in L'$. Let us write $l' =  \sum_{v \in \calv} b_v E_v$ where $b_v$ are rational numbers, and assume that $b_v <0$ if $v \in |Z|$.

Suppose that we have a singularity $(X, 0)$ with resolution $\tX$ and resolution graph $\mathcal{T}$, and let us have the restricted natural line bundle $\calO_Z(l')$.

Suppose that $k = h^1(\calO_Z(l')) > \chi(-l') - \min_{0 \leq l \leq Z}  \chi(-l' + l)$, and let us have an arbitrary number $k > r \geq \chi(-l') - \min_{0 \leq l \leq Z}  \chi(-l' + l)$, then there is another singularity with resolution $\tX'$ and with resolution graph $\mathcal{T}$, for which one has $r = h^1(\calO_Z(l'))$.
\end{theorem}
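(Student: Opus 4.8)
The plan is to adapt the strategy of Theorem A (the $h^1$--stratification of $\pic^{l'}(Z)$ on a \emph{fixed} singularity) to the setting in which the line bundle stays the \emph{natural} one $\calO_Z(l')$ but the analytic structure of the ambient non--reduced space is \emph{degenerated} in Laufer's sense (Section \ref{ss:LauferDef}). It suffices to show that if $k=h^1(\calO_Z(l'))>q:=\chi(-l')-\min_{0\le l\le Z}\chi(-l'+l)$ then some singularity with graph $\mathcal{T}$ realizes $h^1(\calO_Z(l'))=k-1$; iterating then produces every prescribed value $r$ with $q\le r<k$. I would induct on $h^1(\calO_Z)$. The base case $h^1(\calO_Z)=0$ is vacuous, since then $\pic^{l'}(Z)$ is a single point, so $h^1(\calO_Z(l'))$ equals the generic value $q$ of \cite{NNA2} and cannot exceed it. So assume the statement whenever $h^1(\calO_Z)<\alpha$ and let $h^1(\calO_Z)=\alpha>0$. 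As in the proof of Theorem A I would first carry out the standard reductions via Laufer sequences: reduce to $|Z|$ connected with $(l',E_v)\ge 0$ for all $v\in|Z|$ (keeping the hypothesis $b_v<0$ on $|Z|$, so that the \cite{NNA2} formula for the natural bundle of a generic singularity is in force), and then — using the long exact sequence of the fixed part — reduce to the case where $\calO_Z(l')$ has no fixed components and $Z$ is already the cohomological cycle of $\calO_Z(l')$ with $h^1(\calO_Z)=\alpha$ (otherwise the induction hypothesis applies to a smaller cycle directly). The case of a nonzero fixed part $A$ then follows by splitting off $A$ and applying the no--fixed--part case to $\calO_{Z-A}(l'-A)$, exactly as in the Theorem A argument, since that $h^1$ still exceeds its topological bound.

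\emph{Blow--up and the reduction lemma.} Since $h^1(\calO_Z)=\alpha>0$ and $|Z|$ carries a tree of $\bP^1$'s, some vertex $u\in|Z|$ has $Z_u\ge 2$. I would blow up $E_u$ sequentially $Z_u-1$ times at generic points, call the last $(-1)$--curve $E_{u'}$, the new vertex set $\calv_{new}$, the new resolution $\tX_{new}$, and set $Z_{new}=\pi^*(Z)-\sum_{v\in\calv_{new}\setminus\calv}d(v)E_v$ (so $h^1(\calO_{Z_{new}})=h^1(\calO_Z)$) and $Z_r=Z_{new}|_{\calv_{new}\setminus u'}$. Because the centres are generic and disjoint from the divisor $D$ of a section $s\in H^0(\calO_Z(l'))_{reg}$, the pulled--back natural bundle $\calO_{Z_{new}}(\pi^*(l'))$ still has $h^1=k$ and a regular section with divisor $D|_Z$; Lemma \ref{minus} applied to $\calL=\calO_Z(l')$ then gives $h^1(\calO_{Z_r})<h^1(\calO_Z)$ and $h^1(\calO_{Z_r}(\pi^*(l')))=k-1$. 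I would also record that the sign condition survives: every new exceptional curve receives coefficient $b_u<0$ in $\pi^*(l')$, and more generally, for any $0\le B\le Z_{new}$ the coefficients of $\pi^*(l')-B$ stay negative on $|(Z_{new}-B)_r|$; this, together with $h^1(\calO_{(Z_{new}-B)_r})\le h^1(\calO_{Z_r})<\alpha$, is what will make the induction hypothesis applicable below.

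\emph{The deformation engine.} Now I would degenerate the analytic structure of $Z_{new}$ while keeping the structure of the subcycle $Z_r$ \emph{and the line bundle} $\calO_{Z_r}(\pi^*(l'))$ unchanged, i.e.\ pass to a relatively generic structure $Z_{new,p}$ over the fixed $Z_r$, in the sense of the relative--genericity construction of \cite{R} recalled above. By semicontinuity and surjectivity of restriction on $H^1$, one has $k-1=h^1(\calO_{Z_r}(\pi^*(l')))\le h^1(\calO_{Z_{new,p}}(\pi^*(l')))\le h^1(\calO_{Z_{new}}(\pi^*(l')))=k$. If the value is $k-1$ we descend (below) and are done. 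If it is $k$, Theorem \ref{relgen2} produces a cycle $0\le B\le Z_{new}$ with
\[
k=\chi(-\pi^*(l'))-\chi(-\pi^*(l')+B)+h^1\big((Z_{new}-B)_r,\ \calO_{Z_r}(\pi^*(l'))(-B)\big),
\]
and an exact--sequence estimate (as in the Theorem A argument) shows that $h^1\big((Z_{new}-B)_r,\calO_{(Z_{new}-B)_r}(\pi^*(l')-B)\big)$ strictly exceeds its topological bound $q'$. Applying the induction hypothesis to this cycle and Chern class, I obtain a deformation of the analytic structure on $(Z_{new}-B)_r$ lowering that $h^1$ by one; I then extend this to a deformation of the structure on $Z_r$ (Laufer's completeness, Section \ref{ss:LauferDef}) and re--glue relatively generically to produce $Z_{new,p_2}$, again with $h^1(\calO_{Z_{new,p_2}}(\pi^*(l')))\in\{k-1,k\}$. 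Iterating, termination follows because $\sum_{0\le l\le Z_{new}}h^1\big((Z_{new}-l)_r,\ \calO_{Z_r}(\pi^*(l'))(-l)\big)$ drops strictly at each step.

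\emph{Descent and the main obstacle.} For the multiplicities chosen large enough, Laufer's realisation theorem (\cite{LaDef1},\cite{Lauferbook}) turns the structure finally obtained on $Z_{new}$ into a genuine singularity $\tX'_{new}$ with graph $\mathcal{T}_{new}$; blowing the exceptional chain $E_{u'},\dots$ back down yields a singularity $\tX'$ with graph $\mathcal{T}$, and since $R^1\pi_*=0$ for natural bundles under point blow--ups while $Z_{new}=\pi^*(Z)-\sum d(v)E_v$, one gets $h^1(\calO_Z(l'))=h^1(\calO_{Z_{new}}(\pi^*(l')))=k-1$ on $\tX'$. The hardest step is the deformation engine: one must make Laufer's deformation theory of non--reduced spaces cohere with the relative Abel--map results of \cite{R} (Theorems \ref{relgen1},\ref{relgen2}), and in particular guarantee that the restricted natural line bundle $\calO_{Z_r}(\pi^*(l'))$ on the fixed subcycle genuinely does not change as the ambient structure is deformed — since a restricted natural line bundle need not be natural on the subobject, this forces a careful, somewhat delicate choice of the deformation (the ``technical modification'' alluded to in the outline). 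A secondary but real nuisance is the bookkeeping: verifying that the sign condition, the strict inequality $h^1(\calO_\bullet)<\alpha$, and the two cohomological--cycle hypotheses of Lemma \ref{minus} all persist under the blow--ups, the passage to $(Z_{new}-B)_r$, and the successive deformations.
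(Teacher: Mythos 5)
Your proposal reproduces the architecture of the paper's own proof: induction on $h^1(\calO_Z)$, reduction to the fixed--component--free case and to the cohomological cycle, the sequential blow--up of $E_u$ combined with Lemma~\ref{minus} to drop $h^1$ by one on $Z_r$, the relatively generic re--gluing governed by Theorem~\ref{relgen2}, the induction step applied to $(Z_{new}-B)_r$, and termination via the decreasing sum $\sum_l h^1((Z_{new}-l)_r,\cdot)$. So the route is the same one the paper takes.

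However, there is a genuine gap at exactly the point you flag as ``the hardest step'' and then defer: you never supply the mechanism that keeps the restricted natural line bundle $\calO_{Z_r}(\pi^*(l'))$ literally unchanged as the analytic structure of the ambient non--reduced space is deformed. This is not a bookkeeping issue but the load--bearing step: without it the lower bound $h^1(\calO_{Z_{new,p}}(\pi^*(l')))\geq h^1(\calO_{Z_r}(\pi^*(l')))=k-1$ is unjustified (the deformed structure has its \emph{own} natural line bundle, whose restriction to $Z_r$ could a priori be a different element of $\pic^{R(\pi^*(l'))}(Z_r)$ with smaller $h^1$), and Theorem~\ref{relgen2} cannot be invoked either, since it computes $h^1$ of a relatively generic bundle in a \emph{fixed} fibre $r^{-1}(\mfl)$ and one must know that the natural bundle of the deformed structure lies in that same fibre. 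The paper's resolution is a concrete construction you are missing: before deforming, it inserts an auxiliary chain of $M$ further blow--ups at the intersection point of $E_{u'}$ with its neighbour, producing vertices $u_1,\dots,u_M$, with $M$ chosen so large that $e_{Z''_{r,M}}(u_M)=0$ for \emph{every} analytic structure (this is a purely combinatorial largeness condition from \cite{NNA1}). Then the Abel map in the relevant Chern class has zero--dimensional image on $Z''_{r,M}$, so the divisor cut out by $E_{u'}$ determines the same line bundle on $Z''_{r,M}$ for the original and for every deformed structure; only after this does Laufer's complete deformation of the pair $(Z'_{new,M},Z'_{r,M})$ fixing $Z'_{r,M}$ (and the completeness of a deformation of a large cycle $Z'>Z$, used to pull the constructed points back into a single parameter space $Q$ of genuine singularities with graph $\mathcal{T}$) make the relative--genericity machinery applicable. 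Since your write--up acknowledges the problem but offers no substitute for this device, the central step of the argument remains unproved as stated.
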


\begin{proof}

Let's denote the restricton of the Chern class $l'$ to $|Z|$ by $l'_Z \in L'_{|Z|}$.

Assume now that  $l'_Z \notin - S'_{|Z|}$, then by Laufer sequence we can find a cycle $l \in L$, such that $R(l'-l) \in - S'_{|Z-l|}$ and $h^0(\calO_Z(l')) = h^0(\calO_{Z-l}(l'-l))$
for every singularity corresponding to the resolution graph $\mathcal{T}$.

Notice that the negativity assumption of the coefficients still hold for the natural line bundle $\calO_{Z-l}(l'-l)$.

It can happen that $|Z-l|$ is nonconnected, but we can treat its connected components seperately.

\emph{It means that we can assume in the following that $|Z|$ is connected and $l'_Z \in - S'_{|Z|}$, so we have $a_v < 0$ for each vertex $v \in |Z|$:}

Now from \cite{NNA1} we know that for a generic line bundle $\calL' \in \pic^{l'_Z}(Z)$ one has:

\begin{equation*}
h^1(Z, \calL') = \chi(-l') - \min_{0 \leq l \leq Z} \chi(-l'+ l) = \chi(-l'_Z) - \min_{0 \leq l \leq Z} \chi(-l'_Z+ l),
\end{equation*}
let's denote this number by $q$ for the rest of the proof.

It means that $q$ is the minimal possible value of $h^1$ of the line bundles in $\pic^{l'_Z}(Z)$.

We will prove the theorem by induction on the parameter $h^1(\calO_Z)$.

If $h^1(\calO_Z) = 0$, then $\pic^{l'}(Z)$ consists of just one line bundle and then $h^1(\calO_Z(l')) = \chi(-l') - \min_{0 \leq l \leq Z}  \chi(-l' + l)$ and there is nothing to prove.

So assume in the following that $h^1(\calO_Z) = \alpha$ and we know the statement in case $h^1(\calO_Z) < \alpha$.

Notice that from \cite{NNA2} Theorem 5.1.1 , we know that for a generic singularity $\tX$ supported on the graph $\mathcal{T}$ we have:

\begin{equation*}
h^1(\calO_Z(l')) = \chi(-l') - \min_{0 \leq l \leq Z}\chi(-l' + l) = q.
\end{equation*}

This means that $q$ is the minimal possible value $h^1(\calO_Z(l'))$ can be, for any analytic structure.

So let's return to the proof of our main theorem and assume, that $k > q$, and we have a singularity $\tX$ supported on $\mathcal{T}$, such that $k = h^1(\calO_Z(l')) > q$.
We are enough  to construct a singularity $\tX'$ supported on the same resolution graph, such that $h^1(\calO_Z(l')) = k-1$.

Let's have a very large cycle $Z' > Z$ on the singularity $\tX$ and let's apply now Laufer's result, which says that there is a deformation $\omegl:\calz' \to (Q, 0)$ of $Z'$, such that $\omegl^{-1}(0) = Z'$, which is complete at every point $q \in Q$.

We know that there is a minimal cycle $A$, such that $H^0(\calO_{Z}(l')) = H^0(\calO_{Z-A}(l'- A)) $ and $\calO_{Z-A}(l'- A) \in \im( c^{l' - A}(Z-A))$, where $A$ is the fixed component of the line bundle $\calO_Z(l')$ on the cycle $Z$.

\subsection{Proof in the case $A = 0$, or equivalently $\calO_{Z}(l') \in\im( c^{l' }(Z))$.}

We prove the statement first in the special case $A = 0$, in fact we will prove that there is a map $f: (\bC, 0) \to (Q, 0) $, such that $f(0) = 0$ and for a generic element $t \in (\bC, 0) $, if we denote $\omegl^{-1}(f(t)) = Z'_t$, and it's subcycle $Z_t$, then one has $h^1(\calO_{Z_t}(l')) = k-1$.

So assume in the following that $\calO_{Z}(l') \in \im( c^{l' }(Z))$ and let's have a generic section $s \in H^0(\calO_{Z}(l'))_{reg}$ and its divisor $|s| = D$.

Let's denote by $\Omega_D \subset \frac{H^0(\calO_{\tX}(K+Z))}{H^0(\calO_{\tX}( K))}$ the subset of differential forms, which has got no pole on the divisor $D$, more precisely this means
that $\Omega_D$ contains the differential forms which vanish on the image of the tangent map $T_D(c^{l'}(Z))$.

Let's assume that a generic differential form $w \in \Omega_D$ has got a pole of order $Z''$ on the exceptional divisors, where $Z''_v = 0$ if $w$ han't got a pole on $E_v$, now obviously we have $Z'' \leq Z$.

As before we know that $Z''$ is the minimal cycle, such that $Z'' \leq Z$ and $h^1(\calO_{Z''}(l')) = h^1(\calO_{Z}(l')) = k$, and obviously we have $h^1(\calO_{Z''}(l')) \geq  \chi(-l') - \min_{0 \leq l \leq Z''} \chi(-l'+ l)$.

\begin{lemma}
We can reduce the statement to $Z''$ so we are enough to prove that there is a map $f': (\bC, 0) \to (Q, 0)$, such that $f(0) = 0$ and for a generic element $t \in (\bC, 0) $, if we denote $\omegl^{-1}(f'(t)) = Z'_t$, and it's subcycle by $Z''_t$, then one has $h^1(\calO_{Z''_t}(l')) = k-1$.
\end{lemma}
\begin{proof}

Since the subsets $W_r(Q) = (p \in Q | h^1(\calO_{Z_p}(l')) = r)$ are constructible analytic subsets, our statement is equivalent to the statement that there are points $p \in Q$ arbitrary close to $0$ in the classical topology, such that $h^1(\calO_{Z_p}(l')) = k-1$.

For a generic element $t \in (\bC, 0) $ we have $h^1(\calO_{Z''_t}(l')) = k-1$ and $ h^1(\calO_{Z_t}(l')) \geq  h^1(\calO_{Z''_t}(l')) = k-1$ and by semicontinuity of $h^1$ we have $ h^1(\calO_{Z_t}(l')) \leq h^1(\calO_{Z}(l')) = k$.

If $h^1(\calO_{Z_t}(l')) = k-1$, then we are done, so we can assume that $k = h^1(\calO_{Z_t}(l')) > h^1(\calO_{Z''_t}(l'))$, which means that  $Z''_t$ isn't the minimal cycle, such that $Z''_t \leq Z_t$ and $ h^1(\calO_{Z''_t}(l'))= h^1(\calO_{Z_t}(l'))$.

We know that $f'(t)$ is a degeneration of $Z'$ with $k = h^1(\calO_{Z_t}(l'))$ and we want to argue that this minimal cycle can change only finitely many times by passing to degenerations, so repeating this argument a finitely times we get a degeneration $Z'_p$ of $Z'$,such that $h^1(\calO_{Z_p}(l')) = k-1$ and this would prove the statement.

Indeed we know that the minimal cycle $C$, for which $h^1( \calO_{C}(l')) = h^1(\calO_{Z}(l'))$ is determined by the set of numbers $(h^1(\calO_{B}(l')) | B \leq Z)$.

Notice that while passing to degenerations, these numbers can only decrease, which means that if any of them changes, then the sum $\sum_{0 \leq B \leq Z} h^1(\calO_{B}(l'))$ decreases.

It means that indeed the minimal cycle $C$ can change only finitely many times while passing to degenerations.

\end{proof}

So we are enough to prove in the following that there is a map $f': (\bC, 0) \to (Q, 0)$, such that $f'(0) = 0$ and for a generic element $t \in (\bC, 0) $, one has $h^1(\calO_{Z''_t}(l')) = k-1$.

 We can assume that $h^1(\calO_{Z''}) = h^1(\calO_Z) = \alpha$, otherwise the statement follows from the induction hypothesis, notice that we have $\Omega_D \subset \frac{H^0(\calO_{\tX}(K+Z''))}{H^0(\calO_{\tX}(K))} = \frac{H^0(\calO_{\tX}( K+Z))}{H^0(\calO_{\tX}(K))}$.

Let's denote the subset of vertices $I = (v \in \calv | Z''_v \geq 2)$, we know that $I \neq 0$ because of $h^1(\calO_{Z''}) > 0$, so let's pick an arbitrary vertex $u \in I$.

Let's blow up the exceptional divisor $E_u$ in a generic points sequentially $(Z''_u - 1)$ times.

Now let's denote the final $-1$-curve by $E_{u'}$, let's denote the new vertex set by $\calv_{new}$, and the new resolution by $\tX_{new}$.

Let's denote furthermore $Z''_{new} = \pi^*(Z'') - \sum_{v \in \calv_{new} \setminus \calv} d(v) E_v$, where $d(v)$ is the distance of $v$ from the vertex set $\calv$, we obviously have $h^1(\calO_{Z''_{new}}) = h^1(\calO_{Z''})$.

Notice that every differential form in $\frac{H^0(\calO_{\tX}(K+Z''))}{H^0(\calO_{\tX}(K))}$ has got a pole on the exceptional divisor $E_{u'}$ of order at most $1$.

Let's have the line bundle $\calO_{Z''_{new}}(\pi^*(l')) = \pi^*(\calO_{Z''}(l')) | Z''_{new}$ on the cycle $Z''_{new}$, we have $h^1(\calO_{Z''_{new}}(\pi^*(l'))) = h^1(\calO_{Z''}(l')) = k$ and there is a section $s_{new}\in H^0(\calO_{Z''_{new}}(\pi^*(l')))_{reg}$, such that $|s_{new}| = D$ (since the origins of the blow ups were disjoint from $|D|$).

Let's denote the cycle $Z''_{new} | (\calv_{new} \setminus u') = Z''_{r}$, we claim that $h^1(\calO_{Z''_{r}}) < h^1(\calO_{Z''})$:

We know that there are differential forms in $\frac{H^0(\calO_{\tX}(K+Z''))}{H^0(\calO_{\tX}(K))}$, which have got pole of order $Z''$, indeed the generic differentiial form in $\Omega_D$
has got a pole of order $Z''$.

\emph{It means that $e_{Z''_{new}}(u') \neq 0$, which indeed yields $h^1(\calO_{Z''_{r}}) < h^1(\calO_{Z''})$.}

Now by  Lemma\ref{minus} we have $h^1( \calO_{Z''_{r}}(\pi^*(l')) ) = h^1( \calO_{Z''_{new}}(\pi^*(l'))) - 1 = k-1$.

Let's denote $Z'_{new} = \pi^*(Z') - \sum_{v \in \calv_{new} \setminus \calv} d(v) E_v$ and $Z'_{new} | (\calv_{new} \setminus u') = Z'_{r}$.

\textbf{Next we will deform the analytic structure of $Z'_{new}$:}

Let's have a large number $M$, and let's denote the neighbour of the vertex $u'$ by $n(u)$, and let's blow up the divisor $E_{n(u)}$ at the intersection point of $E_{n(u)}$ and $E_u$ sequentially $M$ times (we always blow up at the intersection point of $E_u$ with its neighbour divisor) and let the new divisors be $E_{u_1},E_{ u_2}, \cdots E_{u_M}$.

Let the new singularity be $\tX_{new, M}$ with vertex set $\calv_{new, M}$, where we have $u' \in \calv_{new} \subset \calv_{new, M}$.

Let's denote furthermore $Z''_{new, M} = \pi^*(Z''_{new})$, we have $h^1(\calO_{Z''_{new, M}}) = h^1(\calO_{Z''_{new}})$.

We have still the fact that every differential form in $\frac{H^0(\calO_{\tX}(K+Z''))}{H^0(\calO_{\tX}(K))}$ has got a pole on the exceptional divisor $E_{u'}$ of order at most $1$.

Let's have the line bundle $\calO_{Z''_{new, M}}(\pi^*(l')) = \pi^*(\calO_{Z''_{new}}(\pi^*(l')))$, we obviously have $h^1(\calO_{Z''_{new, M}}(\pi^*(l'))) = h^1(\calO_{Z''}(l')) = k$ and there is a section $s_{new, M}\in H^0(\calO_{Z''_{new, M}}(\pi^*(l')))_{reg}$, such that $|s_{new, M}| = D$.

Let's denote the cycle $Z''_{new, M} | (\calv_{new, M} \setminus u') = Z''_{r, M}$,we know that:

\begin{equation*}
h^1(\calO_{Z''_{r, M}}) = h^1(\calO_{Z''_{r, M}}) < h^1(\calO_{Z''_{new, M}})= h^1(\calO_{Z''}).
\end{equation*}

We know that if $M$ is enough large, then $e_{Z''_{r, M}}(u_M) = 0$ for each each analytic type, see \cite{NNA1}.

We know that $h^1( \calO_{Z''_{r, M}}(\pi^*(l')) ) = h^1( \calO_{Z''_{new, M}}(\pi^*(l'))) - 1 = k-1$, indeed it immediately follows from $\calO_{Z''_{r, M}}(\pi^*(l'))= \pi^*(\calO_{Z''_{r}}(\pi^*(l')))$.

Let's denote $Z'_{new, M} = \pi^*(Z'_{new})$ and $Z'_{new, M} | \calv_{new, M} \setminus u' = Z'_{r, M}$.

\emph{Now by Theorem\ref{th:La2} we have a complete deformation space of the pair $(Z'_{new, M}, Z'_{r, M})$:}

It means, that there is a complete deformation of the pair $(Z'_{new, M}, Z'_{r, M})$, $\omegl_{new}: \calz_{new} \to (Q_u, 0)$, such that $\omegl_{new}$ is a trivial deformation of $Z'_{r, M}$ that is, there is a closed subspace $\calz_r \subset \calz_{new}$ and the restriction of $\omegl_{new}$ to $\calz_r$ is a trivial deformation of $Z'_{r, M}$.

Notice that by blowing down the deformation $\omegl_{new}$ gives a deformation $\omegl_1$ of the cycle $Z'$ with base space $(Q_u, 0)$.

We know that the deformation $\omegl:\calz' \to (Q, 0)$ of $Z'$ is complete, which means that there is a holomorphic map $g: (Q_u, 0) \to (Q, 0)$, such that $g(0) = 0$ and $g$ induces the deformation $\omegl_1$ of the cycle $Z'$.

If there is a point $p \in Q_u$ arbitrary close to $0$, such that $h^1( \calO_{Z''_{new, M, p}}(\pi^*(l'))) = k-1$, then we are done, because then the points $g(p) \in Q$ are
arbitrary close to $0$ and we have $h^1(\calO_{Z''_{g(p)}}(l')) = k-1$.

Let's have a generic point $p \in Q_u$ near $0$, we have by semicontinuity of $h^1$ that $h^1(\calO_{Z''_{new, M, p}}(\pi^*(l'))) \leq h^1( \calO_{Z''_{new, M}}(\pi^*(l'))) = k$.

On the other hand we have $Z''_{r, M, p} \cong  Z''_{r, M}$ and the line bundles $\calO_{Z''_{new, M}}(\pi^*(l')) | Z''_{r, M}$ and $\calO_{Z''_{new, M, p}}(\pi^*(l')) | Z''_{r, M}$ are the same.

Indeed $e_{Z''_{r, M}}( u_M) = 0$ for each analytical type, so the divisors $E_{u'}$ and $E_{u', p}$ give the same line bundle on $ Z''_{r, M}$.

It yields immediately that $h^1( \calO_{Z''_{new, M, p}}(\pi^*(l'))) \geq h^1( \calO_{Z''_{r, M}}(\pi^*(l'))) = k-1$, so we got that $k-1 \leq h^1(  \calO_{Z''_{new, M, p}}(\pi^*(l'))) \leq k$.

If $h^1(  \calO_{Z''_{new, M, p}}(\pi^*(l'))) = k-1$, then we are immediately done.

\bigskip

\underline{Assume in the following that $h^1(\calO_{Z''_{new, M, p}}(\pi^*(l')))= k$:}

\bigskip

We know that $g(p)$ can be arbitrary close to $0$ in $Q$, and by Theorem\ref{relgen2} we get that:

\begin{equation*}
k = h^1( \calO_{Z''_{new, M, p}}(\pi^*(l'))) = \chi(- \pi^*(l')) - \min_{0 \leq l \leq Z''_{new, M, p}}( \chi(- \pi^*(l') + l)  - h^1(\calO_{(Z''_{new, M, p}- l)_{r, M}}(  \pi^*(l') - l))).
\end{equation*}

Let's have a complete deformation $\omegl_2 :\calz'_{new, M} \to (Q_{2}, 0)$ of the cycle $Z'_{new, M, p}$, by blowing down this gives a deformation $\omegl_s$ of the cycle $Z'_{g(p)}$ with base space $(Q_2, 0)$.

Now we know, that the deformation $\omegl:\calz' \to (Q, 0)$ of $Z'$ is complete at $g(p)$, which means that there is a holomorphic map $g': (Q_2, 0) \to (Q, g(p))$, such that $g'(0) = g(p)$ and $g'$ induces the deformation $\omegl_s$ of the cycle $Z'_{g(p)}$.

It means that we are enough to show that there are points $p_2 \in Q_2$ arbitrary close to $0$ such that $k-1 = h^1( \calO_{Z''_{new, M, p_2}}(\pi^*(l'))) = h^1( \calO_{Z''_{g'( p_2)}}(l'))$ since then $g'(p_2)$ can be arbitrary close to $g(p)$.

For an arbitrary $\beta \in Q_2$ let's denote
\begin{equation*}
G(\beta) = \chi(- \pi^*(l')) - \min_{0 \leq l \leq Z''_{new, M, \beta}}( \chi(- \pi^*(l') + l)  - h^1(\calO_{(Z''_{new, M, \beta}- l)_{r, M}}(\pi^*(l') - l))).
\end{equation*}

\begin{lemma}
There are points $p_2 \in Q_2$ arbitrary close to $0$, such that $G(p_2) = k-1$.
\end{lemma}
\begin{proof}

We know that there is a cycle $0 \leq B \leq Z''_{new, M, p}$, such that $k =  \chi(- \pi^*(l')) - \chi(- \pi^*(l') + B) + h^1(\calO_{(Z''_{new, M, p}- B)_{r, M}}(  \pi^*(l') - B))$.

Let's denote by $q' =\chi(-\pi^*(l') + B) -  \min_{0 \leq l \leq (Z''_{new, M, p}- B)_{r, M}} \chi(-\pi^*(l') + B + l)$.

We know by Theorem 5.1.1. from \cite{NNA2} that $q'$ is the minimal possible value of $h^1(\calO_{(Z''_{new, M}- B)_{r, M}}( \pi^*(l') - B))$ if we consider all analytic structures on the cycle $(Z''_{new, M}- B)_{r, M}$ (indeed the natural line bundle $\calO_{(Z''_{new, M}- B)_{r, M}}( \pi^*(l') - B)$ satisfies the negativity condition of coefficients).

It follows that $q \geq \chi(- \pi^*(l')) - \chi(- \pi^*(l') + B) + q'$, which means that $h^1(\calO_{(Z''_{new, M, p}- B)_{r, M}}( \pi^*(l') - B)) > q'$.

We also know that $h^1(\calO_{(Z''_{new, M, p}- B)_{r, M}}) < h^1(\calO_Z) = \alpha $, so by the induction hypothesis we know that there exists $p_1$ arbitrary close to $0$ in $Q_2$, such that $h^1(\calO_{(Z''_{new, M, p_1}- B)_{r, M}}(\pi^*(l') - B)) = h^1(\calO_{(Z''_{new, M, p}- B)_{r, M}}( \pi^*(l') - B)) - 1$.

Since the analytical type corresponding to $p_1 \in Q_2$ is a degeneration of the analytical type corresponding to $0 \in Q_2$ by semicontinuity of $h^1$ we have $h^1(\calO_{(Z''_{new, M, p_1}- l)_{r, M}}( \pi^*(l') - l)) \leq h^1(\calO_{(Z''_{new, M, p}- l)_{r, M}}( \pi^*(l') - l))$ for each $0 \leq l \leq  Z''_{new, M}$, so we get:

\begin{equation*}
\chi(- \pi^*(l')) - \min_{0 \leq l \leq Z''_{new, M, p_1}}( \chi(- \pi^*(l') + l)  - h^1(\calO_{(Z''_{new, M, p_1}- l)_{r, M}}( \pi^*(l') - l))) \leq k.
\end{equation*}

On the other hand we have $h^1(\calO_{(Z''_{new, M, p_1}- B)_{r, M}}(\pi^*(l') - B)) = h^1(\calO_{(Z''_{new, M, p}- B)_{r, M}}( \pi^*(l') - B)) - 1$, which yields:

\begin{equation*}
\chi(- \pi^*(l')) - \min_{0 \leq l \leq Z''_{new, M, p_1}}( \chi(- \pi^*(l') + l)  - h^1(\calO_{(Z''_{new, M, p_1}- l)_{r, M}}( \pi^*(l') - l))) \geq k-1.
\end{equation*}

Now if $G(p_1) = k-1$, then we are done with our claim with the choice of $p_2 = p_1$.

\underline{So we can assume that $G(p_1) = k$.}

In this case let's repeat the same procedure with $p_1 \in Q_2$ instead of $0 \in Q_2$, notice that $\sum_{0 \leq l \leq Z''_{new, M}} h^1(\calO_{(Z''_{new, M, p_1}- l)_{r, M}}(  \pi^*(l') - l)) < \sum_{0 \leq l \leq Z''_{new, M}} h^1(\calO_{(Z''_{new, M, p}- l)_{r, M}}(  \pi^*(l') - l))$ because of semicontinuity and the fact that $h^1(\calO_{(Z''_{new, M, p_1}- B)_{r, M}}( \pi^*(l') - B)) = h^1(\calO_{(Z''_{new, M, p}- B)_{r, M}}(  \pi^*(l') - B)) - 1$.

It means that we can repeat this procedure only finitely many times, and finally we get a point $p_2 \in Q_2$ arbitrary close to $0$, such that $G(p_2) = k-1$, this proves our lemma completely.

\end{proof}

\bigskip

We know that the deformation $\omegl_2 :\calz'_{new, M} \to (Q_{2}, 0)$ is complete in $q_2$.

We claim there are points $p'_2$ arbitrary close to $p_2$ such that:

\begin{equation*}
h^1( \calO_{Z''_{new, M, p'_2}}(\pi^*(l'))) = \chi(- \pi^*(l')) - \min_{0 \leq l \leq Z''_{new, M, p_2}}( \chi(- \pi^*(l') + l)  - h^1(\calO_{(Z''_{new, M, p_2}- l)_{r, M}}( \pi^*(l') - l))) = k-1.
\end{equation*}

Indeed we should fix the restriction of analytic structure $Z''_{new, M, p_2}$,  $Z''_{r, M, p_2}$ and we should move the analytic structure $Z''_{new, M, p_2}$ to get an analytic structure  $Z''_{new, M, p'_2}$ which is relatively generic to $Z''_{r, M, p_2}$.

By Theorem\ref{relgen2} this gives the desired analytic structure $p'_2 \in (Q_{2}, 0)$, the points $g'(p'_2) \in (Q, 0)$ proves our statement in the case $A = 0$, as it can be arbitrary close to $0$ in $Q$.

\subsection{Proof in the general case.}

Let's prove the statement in the following in the general case without the assumption $A = 0$.

We know that there are only finitely many possibilities for the fixed component cycle $A$ of the natural line bundle $\calO_{Z}(l') \in  \pic^{l'}(Z)$ for some analytic type, namely the cycles $0 \leq B \leq Z$.

Let's have the the number $m = \sum_{0 \leq B \leq Z, J \subset |Z-B|} h^0(\calO_{Z-B - E_J}( l' - B - E_J))$, we will prove our statement by induction on $m$, so assume that we know the statement for smaller values of $m$ (the case $m = 0$ is trivial).

Notice that the set of numbers $h^0(\calO_{Z-B - E_J}( l' - B- E_J)) | 0 \leq B \leq Z, J \subset |Z-B|)$ determine the fixed component cycle $A$ of the line bundle $\calO_{Z}(l')$.

Indeed $A$ is the minimal cycle such that $h^0(\calO_{Z-A - E_J}( l' - A - E_J)) < h^0(\calO_{Z-A}( l' - A))$.

Let's have a very large cycle $Z' > Z$, and let's have a complete deformation of the cycle $Z'$, namely $\omegl: \calz' \to (Q, 0)$, such that $\omegl^{-1}(0) = Z'$.

Assume that $A$ is the fixed component cycle of the line bundle $\calO_{Z}(l')$, which means that $A$ is the minimal cycle, such that $\calO_{Z-A}(l'-A) \in \im(c^{l'-A}(Z-A))$.

It means in particular that $k = h^1(\calO_{Z}(l')) = \chi(-l') - \chi(-l' + A) + h^1(\calO_{Z-A}(l'-A))$.

We know that

\begin{equation*}
\chi(-l') - \chi(-l' + A) + \chi(-l' + A) - \min_{0 \leq l \leq Z-A} \chi(-l' + A + l) \leq \chi(-l') - \min_{0 \leq l \leq Z} \chi(-l'+ l),
\end{equation*}
 which means that $h^1(\calO_{Z-A}(l'-A)) >\chi(-l' + A) - \min_{0 \leq l \leq Z-A} \chi(-l' + A + l)$.

\emph{Since $\calO_{Z-A}(l'-A) \in \im(c^{l'-A}(Z-A))$, we can apply the statement already proved in the special case $A = 0$:}

It means that there is a map $f: (\bC, 0) \to  (Q, 0)$, such that $f(0) = 0$ and for a generic element $t \in (\bC, 0)$, one has $h^1(\calO_{Z-A, f(t)}(l'-A)) =  h^1(\calO_{Z-A}(l'-A)) - 1 $.

Let's have now a generic element $t \in (\bC, 0) $ and let's denote $f(t) = p$, so we have $h^1(\calO_{Z-A, p}(l'-A)) =  h^1(\calO_{Z-A}(l'-A)) - 1$.

We have that $h^1(\calO_{Z, p}(l')) \geq \chi(-l') - \chi(-l' + A) + h^1(\calO_{Z-A, p}(l'-A)) = k-1$, on the other hand  by semicontinuity of $h^1$ we know that $h^1(\calO_{Z, p}(l')) \leq h^1(\calO_{Z}(l')) = k$.

If $h^1(\calO_{Z, p}(l'))  = k-1$, then we are done, so we can assume that $h^1(\calO_{Z, p}(l'))  = k$:

Notice that if $h^1(\calO_{Z, p}(l')) = k$, then we have $h^1(\calO_{Z, p}(l')) > \chi(-l') - \chi(-l' + A) + h^1(\calO_{Z-A, p}(l'-A)) = k-1$.
It means that $C \neq A$, where $C$ is the minimal cycle such that $\calO_{Z-C, p}(l'-C) \in \im( c^{l' - C}(Z-C))$.

On the other hand, this means that the fixed component cycle changes when passing to the analytic type $Z'_p$, which means that the set of numbers $(h^1(\calO_{Z- B - E_J, p}(l' -B - E_J)) | 0 \leq B \leq Z, J \subset |Z-B|)$ changes.

This yields that $\sum_{0 \leq B \leq Z, J \subset |Z-B|} \calO_{Z- B - E_J, p}(l' -B - E_J) < m$ and $ h^1(\calO_{Z, p}(l')) = k$, and so we are done by the induction hypothesis on $m$.

So we are done with the proof of the general case and it proves our theorem completely.

\end{proof}

Let's have finally the following corollary of the previous theorem:

\begin{cor}\label{geomgen}
Let $\mathcal{T}$ be an arbitrary rational homology sphere resolution graph with vertex set $\calv$ and let $Z$ be an effective cycle on it and let's have a singularity with resolution $\tX$ with resolution graph $\mathcal{T}$.

1) Suppose, that $k = h^1(\calO_Z) > 1 - \min_{E_{|Z|} \leq l \leq Z}  \chi(l)$, and let's have an arbitrary number $k > m \geq  1 - \min_{E_{|Z|} \leq l \leq Z}  \chi(l)$, then there is another singularity with resolution $\tX'$ supported on the resolution graph $\mathcal{T}$, for which one has $m = h^1(\calO_Z)$.

2) In particular this means that the possible values of the geometric genus $p_g(\tX)$ form an interval of integers.
\end{cor}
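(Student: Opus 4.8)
The plan is to derive the Corollary from Theorem \textbf{B} by rewriting $\calO_Z$ as a restricted natural line bundle with strictly negative Chern class on a slightly smaller cycle, so that the negativity hypothesis of Theorem \textbf{B} is met. After separating the connected components of $|Z|$ we may assume $|Z|$ is connected (this is the only case needed for 2), since $\mathcal{T}$ is a tree, so $|Z|=E$ is connected). Put $Z'=Z-E_{|Z|}$ and $l'=-E_{|Z|}\in L\subset L'$. Regarding $\calO_{Z'}(l')=\calO_{\tX}(-E_{|Z|})|Z'$ as the ideal of the reduced curve $E_{|Z|}$ inside the scheme $Z$, we get the exact sequence
$$0\to \calO_{Z'}(l')\to \calO_Z\to \calO_{E_{|Z|}}\to 0 .$$
Since $E_{|Z|}$ is a tree of rational curves one has $H^1(\calO_{E_{|Z|}})=0$ and $H^0(\calO_{E_{|Z|}})=\C$, and the restriction $H^0(\calO_Z)\to H^0(\calO_{E_{|Z|}})$ is onto (the constant function lifts); hence $h^1(\calO_Z)=h^1(\calO_{Z'}(l'))$, and this holds for every resolution with graph $\mathcal{T}$.

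Next I would check that $(\mathcal{T},Z',l')$ satisfies the hypotheses of Theorem \textbf{B}. Writing $l'=\sum_v b_vE_v$ one has $b_v=-1<0$ for all $v\in|Z|\supseteq|Z'|$, so the required negativity holds. From the adjunction formulas one computes $\chi(E_I)=-(E_I,E_I-Z_K)/2=1$ for every connected vertex set $I$, and therefore, after the substitution $\tilde l=E_{|Z|}+l$,
$$\chi(-l')-\min_{0\le l\le Z'}\chi(-l'+l)=\chi(E_{|Z|})-\min_{E_{|Z|}\le \tilde l\le Z}\chi(\tilde l)=1-\min_{E_{|Z|}\le l\le Z}\chi(l).$$
So if the given singularity $\tX$ has $k=h^1(\calO_Z)=h^1(\calO_{Z'}(l'))>1-\min_{E_{|Z|}\le l\le Z}\chi(l)$, then for each integer $r$ with $1-\min_{E_{|Z|}\le l\le Z}\chi(l)\le r<k$ Theorem \textbf{B} yields a singularity with resolution $\tX'$ and graph $\mathcal{T}$ for which $h^1(\calO_{Z'}(l'))=r$; applying the exact sequence on $\tX'$ gives $h^1(\calO_Z)=r$ there. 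This is statement 1).

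For 2) I would take $Z\gg 0$ supported on all of $E$. By the formal neighbourhood theorem (with a largeness bound depending only on $\mathcal{T}$, via Laufer) one has $h^1(\calO_Z)=h^1(\calO_{\tX})=p_g(\tX)$ for every analytic structure with graph $\mathcal{T}$; and since the intersection form is negative definite, $\chi(l)=-(l,l-Z_K)/2\to+\infty$ as $l\to\infty$, so $1-\min_{E\le l\le Z}\chi(l)=1-\min_{E\le l\in L}\chi(l)$, which by \cite{NNA2} is the geometric genus of a generic singularity and hence the minimum of $p_g$ over all analytic structures with this graph. Let $M=\max_{\tX}p_g(\tX)$; it is finite (being bounded by path lattice cohomology) and attained, and applying 1) to a singularity realising $M$ shows that every integer of $[\,1-\min_{E\le l\in L}\chi(l),\,M\,]$ is a geometric genus while nothing outside it is. Thus the set of geometric genera is exactly this interval.

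The argument is short given Theorem \textbf{B}; the only points that need genuine care are the reduction $\calO_Z\rightsquigarrow\calO_{Z-E_{|Z|}}(-E_{|Z|})$ together with the identity $\chi(E_{|Z|})=1$ (which is what makes the two lower bounds coincide), and, for 2), the uniformity in the stabilisation $h^1(\calO_Z)=p_g$ and the finiteness of $M$. I do not expect any of these to be a serious obstacle.
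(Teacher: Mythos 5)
Your proposal is correct and follows essentially the same route as the paper: the identical reduction $h^1(\calO_Z)=h^1(\calO_{Z-E_{|Z|}}(-E_{|Z|}))$ via the ideal-sheaf exact sequence for $E_{|Z|}\subset Z$ (using surjectivity of $H^0(\calO_Z)\to H^0(\calO_{E_{|Z|}})$ and $h^1(\calO_{E_{|Z|}})=0$), followed by an application of Theorem \textbf{B}, and for part 2) the passage to $Z\gg 0$ so that $h^1(\calO_Z)=p_g(\tX)$. Your explicit verification of the negativity hypothesis and of the identity $\chi(E_{|Z|})=1$ matching the two lower bounds is a welcome (correct) elaboration of details the paper leaves implicit.
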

\begin{proof}
The first statement follows immediately from the previous theorem, since for each analytic structure $\tX$ supported on the resolution graph $\mathcal{T}$ one has
$h^1(\calO_Z) = h^1(\calO_{Z- E_{|Z|}}( -  E_{|Z|}))$, because of the following exact sequence:

\begin{equation}
0 \to  H^0(\calO_{Z- E_{|Z|}}( -  E_{|Z|})) \to  H^0(\calO_Z) \to H^0( \calO_{E_{|Z|}}) \to  H^1(\calO_{Z- E_{|Z|}}(-  E_{|Z|})) \to  H^1(\calO_Z) \to 0.
\end{equation}

Since the map $ H^0(\calO_Z) \to H^0( \calO_{E_{|Z|}}) $ is obviously surjective, we indeed get $h^1(\calO_Z) = h^1(\calO_{Z- E_{|Z|}}( -  E_{|Z|}))$ and then statement 1) follows
from Theorem\textbf{B}.

The second statement follows immediately if we apply the first statement for a very large cycle $Z$, since for a very large cycle $Z$ we have $h^1(\calO_Z) = p_g(\tX)$ for each analytic type $\tX$.
\end{proof}

\end{document}